\numberwithin{equation}{section}
\newcommand{\Z}{\ensuremath{\mathbb{Z}}}
\newcommand{\Q}{\ensuremath{\mathbb{Q}}}
\newcommand{\R}{\ensuremath{\mathbb{R}}}
\newcommand{\C}{\ensuremath{\mathbb{C}}}
\newcommand{\A}{\ensuremath{\mathbb{A}}}
\newcommand{\T}{\ensuremath{\mathbb{T}}}
\DeclareMathOperator{\Hom}{Hom}
\DeclareMathOperator{\End}{End}
\DeclareMathOperator{\Ext}{Ext}
\DeclareMathOperator{\Gal}{Gal}
\DeclareMathOperator{\Ker}{ker}
\DeclareMathOperator{\GL}{GL}
\DeclareMathOperator{\PGL}{PGL}
\newcommand{\m}{\ensuremath{\mathfrak{m}}}
\newcommand{\p}{\ensuremath{\mathfrak{p}}}
\newcommand{\q}{\ensuremath{\mathfrak{q}}}
\newcommand{\LI}{\mathcal{L}}
\DeclareMathOperator{\an}{an}
\DeclareMathOperator{\cont}{cont}
\DeclareMathOperator{\ord}{ord}
\DeclareMathOperator{\St}{St}
\DeclareMathOperator{\II}{\mathbb{I}}
\DeclareMathOperator{\cind}{c-ind}
\DeclareMathOperator{\HH}{\ensuremath{H}}
\renewcommand{\det}{\operatorname{det}}
\newcommand{\cf}{{\mathbbm 1}}			
\newcommand{\pinfty}{\ensuremath{^{\p,\infty}}}
\newcommand{\sinfty}{\ensuremath{^{S,\infty}}}
\DeclareMathOperator{\ram}{ram}
\DeclareMathOperator{\disc}{disc}
\newcommand{\into}{\hookrightarrow}
\newcommand{\onto}{\twoheadrightarrow}
\newcommand{\too}{\longrightarrow}								
\newcommand{\mapstoo}{\longmapsto}
\newcommand{\intoo}{\lhook\joinrel\longrightarrow}
\DeclareRobustCommand\ontoo{\relbar\joinrel\twoheadrightarrow}
\newtheorem{Lem}{Lemma}[section]
\newlength{\@thlabel@width}%
\newcommand{\thmenumhspace}{\settowidth{\@thlabel@width}{\itshape1.}\sbox{\@labels}{\unhbox\@labels\hspace{\dimexpr-\leftmargin+\labelsep+\@thlabel@width-\itemindent}}}
\newtheorem{Pro}[Lem]{Proposition}
\newtheorem{Thm}[Lem]{Theorem}
\newtheorem{Cor}[Lem]{Corollary}
\theoremstyle{definition}
\newtheorem{Def}[Lem]{Definition}
\newtheorem{Rem}[Lem]{Remark}
\author[L. Gehrmann]{Lennart Gehrmann}
\email{lennart.gehrmann@uni-due.de}
\author[M. Pati]{Maria Rosaria Pati}
\email{pati@dima.unige.it}
\title[L-invariants for PGL(2)]{L-invariants for cohomological representations of PGL(2) over arbitrary number fields}
\begin{document}

\begin{abstract}
Let $\pi$ be a cuspidal, cohomological automorphic representation of an inner form $G$ of $\PGL_2$ over a number field $F$ of arbitrary signature.
Further, let $\p$ be a prime of $F$ such that $G$ is split at $\p$ and the local component $\pi_\p$ of $\pi$ at $\p$ is the Steinberg representation.
Assuming that the representation is non-critical at $\p$ we construct automorphic $\LI$-invariants for the representation $\pi$.
If the number field $F$ is totally real, we show that these automorphic $\LI$-invariants agree with the Fontaine--Mazur $\LI$-invariant of the associated $p$-adic Galois representation.
This generalizes a recent result of Spie\ss~respectively Rosso and the first named author from the case of parallel weight $2$ to arbitrary cohomological weights.
\end{abstract}

\maketitle

\tableofcontents

\section*{Introduction}
The purpose of this article is threefold:
first, let $\pi$ be a cuspidal, cohomological automorphic representation of an inner form $G$ of $\PGL_2$ over a number field $F$ of arbitrary signature and $\p$ be a prime of $F$ such that $G$ is split at $\p$ and the local component $\pi_\p$ of $\pi$ at $\p$ is the Steinberg representation.
We want to give a general construction of automorphic $\LI$-invariants (also known as Teitelbaum, Darmon or Orton $\LI$-invariants) for $\pi$.
For representations which are cohomological with respect to the trivial coefficient system, or in other words for forms of parallel weight $2$, these $\LI$-invariants have been defined in general (see for example \cite{Ge2}) but for higher weights they have been defined only in certain situations:
\begin{enumerate}[1.]
\item In case $F=\Q$ and $G$ is compact at infinity by Teitelbaum in \cite{Teitelbaum},
\item in case $F=\Q$ and $G$ is split by Orton in \cite{Orton},
\item in case $F=\Q$ and $G$ is split at infinity by Rotger and Seveso in \cite{RotSev},
\item in case $F$ is imaginary quadratic and $G$ is split by Barrera-Salazar and Williams in \cite{BWi} and
\item in case $F$ is totally real and $G$ is compact at infinity by Chida, Mok and Park in \cite{CMP}.
\end{enumerate}
An obstacle that might have prevented the construction in general is the following:
whereas in case of the trivial coefficient system the representation is always ordinary and therefore non-critical at $\p$, this is no longer true for higher weights.
But note that for the cases 1.-4. above the representation has still non-critical slope at $\p$ and, thus, is non-critical at $\p$ (see the end of Section \ref{overconvergentsec} for a detailed discussion on non-critical slopes).
It seems to the authors of this paper that in \cite{CMP} it is implicitly assumed that the representation is non-critical at $\p$ (see Remark \ref{CMPremark} for more details).
Our first main result (see Definition \ref{defdef}) is the construction of automorphic $\LI$-invariants under the assumption that the representation $\pi$ is non-critical at $\p$.
We point out that our construction of $\LI$-invariants is novel as it does not involve the Bruhat-Tits tree at any stage.

Our second goal is to bridge the gap between works using overconvergent cohomology à la Ash-Stevens, for example \cite{BWi} and \cite{BDJ}, and Spie\ss' more representation-theoretic approach (cf.~\cite{Sp}).
In particular, we show that the non-criticality condition for classes in overconvergent cohomology that is discussed in \cite{BDJ} respectively \cite{BH} is equivalent to a more representation-theoretic one:
assume for the moment that $\pi_\p$ is not necessarily Steinberg but merely has an Iwahori-fixed vector. 
we explain that choosing a $\p$-stabilization of $\pi$, i.e., an Iwahori-fixed vector of $\pi_\p$, yields a cohomology class of a $\p$-arithmetic subgroup of $G(F)$ with values in the dual of a locally algebraic principal series representation of $G(F_\p)$.
Non-criticality is then equivalent to the fact that this class lifts uniquely to a class in the cohomology with values in the continuous dual of the corresponding locally analytic principal series representation (see Proposition \ref{equivalence}).
The main tool to prove this equivalence is the resolution of locally analytic principal series representations by Kohlhaase and Schraen (see \cite{KoSch}).

Finally, we show that, if the number field $F$ is totally real the automorphic $\LI$-invariants attached to $\pi$ agree with the derivatives of the $U_\p$-eigenvalue of a $p$-adic family passing through $\pi$ (cf.~Theorem \ref{mainthm}).
This equality is known in case $F=\Q$ by the work of Bertolini-Darmon-Iovita (see \cite{BDI}) and Seveso (see \cite{Sev}).
For Hilbert modular forms of parallel weight $2$ the equality was recently proven independently by Rosso and the first named author (see \cite{GR}).
As we do not work with general reductive groups as in \textit{loc.cit.~}the arguments simplify substantially making them more accessible to people who are only interested in Hilbert modular forms.
Furthermore, it is known that the derivatives of the $U_\p$-eigenvalue agree with the Fontaine--Mazur $\LI$-invariant of the associated Galois representation, if that Galois representation is non-critical. 
Thus, we deduce the equality of automorphic and Fontaine--Mazur $\LI$-invariants (see \cite{Sp3} for an independent proof of this equality in case of parallel weight $2$). 
The equality of $\LI$-invariants in parallel weight $2$ is necessary for the construction of plectic Stark-Heegner points in recent work of Fornea and the first named author (cf.~\cite{FG}).
This article should be seen as a precursor for defining plectic Stark-Heegner cycles for arbitrary cohomological weights.

\subsection*{Notations}
All rings will be commutative and unital.
The units of a ring $R$ will be denoted by $R^\times.$
If $R$ is a ring and $G$ is a group, we denote the group algebra of $G$ over $R$ by $R[G]$.
The trivial character of any group will be denoted by $\cf$.
Given two sets $X$ and $Y$ we will write $\mathcal{F}(X,Y)$ for the set of all maps from $X$ to $Y$.
If $X$ and $Y$ are topological spaces, we write $C(X,Y)\subseteq \mathcal{F}(X,Y)$ for the set of all continuous maps.
If $Y$ is a topological group, we denote by $C_c(X,Y)\subseteq C(X,Y)$ the subset of functions with compact support.

\subsection*{Acknowledgements}
We would like to thank Yiwen Ding for a enlightening correspondence about non-criticality of Galois representations.
We thank Daniel Barrera Salazar for helpful remarks on an earlier draft of this manuscript.

\subsection*{Setup}
\textit{A number field.}
We fix an algebraic number field $F\subseteq \C$ with ring of integers $\mathcal{O}_F$.
We write $S_\infty$ for the set of infinite places of $F$ and $\Sigma$ for the set of all embeddings from $F$ into the algebraic closure $\overline{\Q}$ of $\Q$ in $\C$.
%We decompose $\Sigma=\Sigma_\R \cup \Sigma_\C$, where $\Sigma_\R$ (resp.~$\Sigma_\C$) denotes the subset of real (resp.~complex) embeddings of $F$. 
The action of complex conjugation on $\Sigma$ will be denoted by $c$.
We write $\delta$ for the number of complex places of $F$.

For any place $\q$ of $F$ we will denote by $F_\q$ the completion of $F$ at $\mathfrak{q}$.
If $\mathfrak{q}$ is a finite place, we let $\mathcal{O}_{\mathfrak{q}}$ denote the valuation ring of $F_{\q}$ and $\ord_{\mathfrak{q}}$ the additive valuation such that $\ord_{\mathfrak{q}}(\varpi)=1$ for any local uniformizer $\varpi\in\mathcal{O}_{\mathfrak{q}}$.
We write $\mathcal{N}(\q)$ for the cardinality of the residue field $\mathcal{O}/\q$.
We normalize the $\q$-adic absolute valuation $|\cdot|_{\q}$ by $|\varpi|_{\p}=\mathcal{N}(\mathfrak{q})^{-1}.$

For a finite set $S$ of places of $F$ we define the ``$S$-truncated adeles'' $\A^{S}$ as the restricted product of all completions $F_{v}$ with $v\notin S$.
In case $S$ is the empty set we drop the superscript $S$ from the notation.
We will often write $\A\sinfty$ instead of $\A^{S\cup S_\infty}$.

If $H$ is an algebraic group over $F$, we will put $H_\q=H(F_\q)$ for any place $\q$ of $F$.
If $S$ is a finite set of places of $F$, we will write $H_S=\prod_{\q\in S} H_\q$.
Further, we abbreviate $H_\infty=H_{S_\infty}$.

\textit{A quaternion algebra.}
We fix a quaternion algebra $D$ over $F$.
We denote by $\ram(D)$ the set of places of $F$ at which $D$ is ramified and put
 \[
 \disc(D)=\prod_{\q\in \ram(D), \q\nmid \infty} \q.
 \]
Let $D^\times$ be the group of units of $D$ considered as an algebraic group over $F$.
The centre $Z\subseteq D^\times$ is naturally isomorphic to the multiplicative group $\mathbb{G}_m$.
We put $G=D^\times/Z.$
For any place $\q\notin \ram(D)$ we fix an isomorphism $D_\q\cong M_2(F_\q)$ that in turn induces an isomorphism $G_\q\cong \PGL_2(F_\q).$
For any Archimedean place $\q\in \ram(D)$ we fix an isomorphism of $D_\q$ with the Hamilton quaternions, which yields an embedding $G_\q\into \PGL_2(\C)$.
In particular, we get an injection $j_\sigma\colon G(F)\subseteq G(F_\q)\xhookrightarrow{\sigma_\ast} \PGL_2(\C)$ for every embedding $\sigma\in \Sigma$ with underlying place $\q$.
We write
 \[
 j\colon G(F)\intoo \prod_{\sigma\in \Sigma} \PGL_2(\C)
 \]
for the diagonal embedding.

%Let $\n\subseteq \mathcal{O}_F$ be a non-zero ideal that is coprime to every prime in $\ram(B)$.
%For every prime $\q\notin \ram(B)$ of $F$ we define the compact open subgroup $K_\q(\n)\subseteq G_\q$ as the image of $\{g \in \GL_2(\mathcal{O}_\q)\mid g \mbox{ upper triangular mod } \n\}$ under the above chosen isomorphism.
%For $\q\in\ram(B)$ we define $K_\p(\n)\subseteq G_\p$ to be the unique maximal compact subgroup.
%If $S$ is a finite set of primes of $F$ we define
%$$K(\n)^{S}=\pro d_{\q\notin \Sigma}K_\p(\n)\subseteq G(\A\sinfty).$$
%If $\S$ is the empty set, we drop it from the notation.

%We decompose $\Sigma_\R= \Sigma_\R^{\ram} \cup \Sigma_\R^{\spl}$, where $\Sigma_\R^{\ram}$ denotes the set of those real embeddings of $F$ at which $B$ is ramified.
Let $S_\infty(D)$ be the set of all Archimedean places of $F$ at which $D$ is split.
We put
 \[
 q=\#S_\infty(D).
 \] 
Let $S_\R(D)\subseteq S_\infty(D)$ be the subset of real places.
We denote by $G_\infty^+$ the connected component of the identity of $G_\infty$ .
The group $\PGL_2(\C)$ and the units of the Hamilton quaternions are connected, whereas $\PGL_2(\R)$ has two connected components.
Therefore, we can identify
 \[
 \pi_0(G_\infty)=G_\infty/G_\infty^+\cong \{\pm 1\}^{S_\R(D)}
 \]
If $A\subseteq G_\infty$ is a subgroup, we put $A^+=A\cap G_\infty^+.$

\textit{An automorphic representation.}
Let $\pi^\prime=\otimes_\q \pi^\prime_{\q}$ be a cuspidal automorphic representation of $\PGL_2(\A)$ that is cohomological (see Section \ref{HarishChandra}).
If $F$ is totally real, then such automorphic representations (up to twists by the norm character) are in one-to-one correspondence with cuspidal Hilbert modular newforms with even weights and trivial Nebentypus.
 
We assume that the local component $\pi^\prime_\q$ is either a twist of the Steinberg representation or supercuspidal for all $\q$ dividing $\disc(D)$.
Thus, there exists a Jacquet--Langlands transfer $\pi$ of $\pi'$ to $G(\A)$, i.e.~an automorphic representation of $G(\A)$ such that $\pi^\prime_\q\cong \pi_\q$ for all places $\q\notin \ram(D).$
Moreover, we have that $\pi_\q$ is one-dimensional for all $\q \mid \disc(D)$ such that $\pi^\prime_\q$ is a twist of the Steinberg representation.

\section{Cohomology of p-arithmetic groups}
We recollect some basic facts about the cohomology of $\p$-arithmetic groups with values in duals of smooth representations.
\subsection{The Eichler--Shimura isomorphism}\label{cohomology}
In this section we recall how the representation $\pi$ contributes to the cohomology of the locally symmetric space attached to $G$.
\subsubsection{Weights and coefficient modules}\label{weights}
Let $k\geq 0$ be an even integer.
For any ring $R$ we let $V_{k}(R)\subseteq R[X,Y]$ be the space of homogeneous polynomials of degree $k$ with $\PGL_2(R)$-action given by
 \[
 (g.f)(X,Y)=\det(g)^{-k/2}f(bY+dX, aY+cX)\ \mbox{for}\ g=\begin{pmatrix} a & b \\ c& d \end{pmatrix}\in \PGL_2(R).
 \]
We may attach to $f\in V_{k}(R)$ an $R$-valued function on $G$ via
$\psi_f(g)=(g.f)(1,0).$
In case $b=\begin{pmatrix}b_1 & u\\ 0 & b_2 \end{pmatrix}$ is an upper triangular matrix, we have
\begin{align*}
\psi_f(bg)=(bg.f)(1,0)
&=(b.(g.f))(1,0)\\
&= b_1^{-k/2} b_2^{k/2} \cdot (g.f)(1,0)\\
&= b_1^{-k/2} b_2^{k/2} \cdot \psi_f(g)
\end{align*}
for all $g\in G$.

For a \emph{weight} $\bm{k}=(k_\sigma)_{\sigma\in\Sigma}\in 2\Z_{\geq 0}[\Sigma]$ we define the $\PGL_2(R)^{\Sigma}$-representation
 \[
 V_{{\bm{k}}}(R)= \otimes_{\sigma \in \Sigma} V_{k_\sigma}(R).
 \]
and $V_{{\bm{k}}}(R)^\vee$ as the $R$-linear dual of $V_{{\bm{k}}}(R).$

We may view $V_{\bm{k}}(\C)$ as a representation of $G(F)$ via the embedding $j$.
In fact, there exists a number field $E\subseteq \C$ such that every embedding $\sigma\colon F\into \C$ factors over $E$ and such that $G(F)$ acts on $V_{\bm{k}}(E)\subseteq V_{\bm{k}}(\C).$

\subsubsection{$(\mathfrak{g},K_\infty)$-cohomology}\label{HarishChandra}
Let $\mathfrak{g}$ denote the complexification of the Lie-Algebra of $G_\infty$.
We fix a maximal compact subgroup $K_\infty$ of $G_\infty$ with connected component 

We recall that $\pi$ is cohomological if and only if there exists a weight $\bm{k}=(k_\sigma)_{\sigma\in\Sigma}$ such that
 \[
 \HH^\bullet(\mathfrak{g},K_\infty^+,\pi_\infty \otimes V_{\bm{k}}(\C)^\vee)\neq 0.
 \]
See \cite{BW} for the notion of $(\mathfrak{g},K_\infty)$-cohomology.

The weight $\bm{k}$ is unique and we fix it from here on.
By \cite{Ha} (3.6.1) we know that 
\begin{align}\label{imagquad}
k_\sigma=k_{c\sigma}
\end{align}
holds for all $\sigma\in \Sigma$.
The group $\pi_0(G_\infty)\cong \pi_0(K_\infty)$ acts on $(\mathfrak{g},K_\infty)$-cohomology.
For every character $\epsilon\colon \pi_0(G_\infty)\to \{\pm 1\}$ we have the following dimension formulas for the $\epsilon$-isotypic component:
 \begin{align}\label{gK}
 \dim_\C\HH^{i}(\mathfrak{g},K_\infty^+,\pi_\infty \otimes V_{\bm{k}}^\vee)^\epsilon =\binom{\delta}{i-q}. 
 \end{align}
Via the Künneth theorem one may reduce the computation to that of the cohomology for each place $\q\mid \infty$ separately.
In case $G_\q$ is split, the computation is spelled out in Section (3.6.2) of \cite{Ha}.
The non-split case is trivial.

\subsubsection{Local systems}
For an open compact subgroup $K\subseteq G(\A^\infty)$ we define the locally symmetric space of level $K$ as
 \[
 \mathcal{X}_K=G(F)\backslash G(\A) / K K_\infty^+.
 \]
If $K$ is small enough, the topological space $\mathcal{X}_K$ carries the structure of a smooth manifold.

We fix a field $\Omega$ of characteristic zero.
Let $N$ be an $\Omega[G(F)]$-module and $\underline{N}$ the locally constant sheaf on $\mathcal{X}_K$ given by the fibres of the projection
 \[
 G(F)\backslash G(\A) \times N /K K_\infty^+ \ontoo \mathcal{X}_K,
 \]
where we let $G(F)$ act on $G(\A) \times N$ diagonally.

Right translation defines commuting actions of the component group $\pi_0(G_\infty)$ and the Hecke algebra $\T_K(\Omega)=\Omega[K\backslash G(\A^\infty)/K]$ of level $K$ on $N$-valued cohomology $\HH^\bullet(\mathcal{X}_K,\underline{N})$.

We assume in the following that $K$ is of the form $K=\prod_{\q}K_\q$ and that
$$(\pi^\infty)^K\neq 0.$$
Let $S$ be a finite set of primes of $F$ such that $S$ contains every $\q$ such that $K_\q$ is not maximal and put $K^S=\prod_{\q\notin S}K_\q$.
The Hecke algebra
$$\T_{K^S}^S(\Omega)=\Omega[K^S\backslash G(\A\sinfty)/K^S]$$
away from $S$ is central in $\T_K(\Omega)$.

We will assume for the rest of this article that $\pi^\infty$ has a model over $E$, which we denote $\pi_E^\infty.$
We may always assume this by enlarging $E$ slightly (see \cite{JanRat}, Theorem C).
If $\Omega$ is an extension of $E$, we put $\pi_\Omega^\infty=\pi_E^\infty \otimes_E \Omega.$
Let $\m_\pi^S\subseteq \T_{K^S}^S(\Omega)$ be the maximal ideal that is given by the kernel of the map
$\T_{K^S}^S(\Omega)\to \End_\Omega((\pi_\Omega^\infty)^K).$

 \begin{Thm}\label{dim}
 Let $\Omega$ be any extension of $E$.
 For every character $\epsilon\colon\pi_0(G_\infty)\to \{\pm 1\}$ we have
  \[\dim_\Omega
  \Hom_{\T_K(\Omega)}((\pi_\Omega^\infty)^K, \HH^i(\mathcal{X}_K,\underline{V_{\bm{k}}(\Omega)^\vee})^\epsilon)
  =\binom{\delta}{i-q}.
	\]
 Moreover, the localization $\HH^i(\mathcal{X}_K,\underline{V_{\bm{k}}(\Omega)^\vee})_{\m_\pi^S}$ is equal to the sum of the images of all homomorphisms from $(\pi_\Omega^\infty)^K$ to $\HH^i(\mathcal{X}_K,\underline{V_{\bm{k}}(\Omega)^\vee})$.
 \end{Thm}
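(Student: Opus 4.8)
The plan is to deduce everything from the $(\mathfrak{g}, K_\infty)$-cohomology computation \eqref{gK} via an Eichler--Shimura type argument, combined with strong multiplicity one for $\PGL_2$. First I would invoke the decomposition of the cohomology of the locally symmetric space into automorphic pieces: by a standard argument using the theory of $(\mathfrak{g}, K_\infty)$-cohomology (as in \cite{BW}), after extending scalars to $\C$ (and noting $V_{\bm k}(\C)^\vee$ is a genuine $G(F)$-representation via $j$) one has a Hecke- and $\pi_0(G_\infty)$-equivariant isomorphism
 \[
 \HH^i(\mathcal{X}_K, \underline{V_{\bm k}(\C)^\vee}) \cong \bigoplus_{\pi''} \HH^i(\mathfrak{g}, K_\infty^+, \pi''_\infty \otimes V_{\bm k}(\C)^\vee) \otimes (\pi''^{\infty})^K,
 \]
the sum running over (a suitable subspace of) automorphic representations $\pi''$ of $G(\A)$ of the fixed infinity type, with cuspidal ones certainly contributing; the coefficient $\Hom_{\T_K}((\pi_\C^\infty)^K, -)$ then picks out exactly the summands $\pi''$ with $\pi''^\infty \cong \pi_\C^\infty$ as Hecke modules, so one needs that $\pi$ is the unique such $\pi''$. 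This uniqueness is strong multiplicity one: via Jacquet--Langlands the $\pi''$ with $\pi''^\infty \cong \pi^\infty$ correspond to automorphic representations of $\PGL_2(\A)$ agreeing with $\pi'$ at almost all places, hence equal to $\pi'$ by strong multiplicity one, and at the infinite places the infinity type is already fixed, so $\pi'' = \pi$. Combined with \eqref{gK} this gives the dimension $\binom{\delta}{i-q}$ over $\C$, hence over $E$ (and any extension $\Omega$) by flat base change, since the relevant Hom-spaces commute with field extension once $\pi^\infty$ has a model over $E$.

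For the second assertion I would argue that localization at $\m_\pi^S$ annihilates all automorphic contributions except those coming from representations $\pi''$ whose away-from-$S$ Hecke eigensystem matches that of $\pi$. By strong multiplicity one again (applied with the finite set $S$), any cuspidal $\pi''$ with the same $\T_{K^S}^S$-eigensystem as $\pi$ satisfies $\pi''_\q \cong \pi_\q$ for all $\q \notin S$, hence $\pi'' = \pi$; and non-cuspidal (residual/Eisenstein) contributions can be excluded because $\pi'$ is cuspidal and its Satake parameters away from $S$ do not match those of any Eisenstein series — more carefully, one notes that the localization of the Eisenstein part at $\m_\pi^S$ vanishes since the Hecke eigenvalues of a cuspidal representation cannot coincide with those of an Eisenstein one at almost all primes (a consequence of, e.g., the different growth of Satake parameters, or can be taken as known from the input that $\pi'$ is cuspidal). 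Therefore $\HH^i(\mathcal{X}_K, \underline{V_{\bm k}(\Omega)^\vee})_{\m_\pi^S}$ is spanned by the images of the maps from $(\pi_\Omega^\infty)^K$, which is the claim.

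I would also need to address two technical points: first, that the decomposition above is valid integrally enough to localize — this is handled by working with $\Omega$-coefficients throughout and using that $\T_{K^S}^S(\Omega)$ acts semisimply on the relevant finite-dimensional cohomology after inverting nothing beyond what is already in $\Omega$ (or by passing to $\C$, localizing, and descending, since $\m_\pi^S$ is defined over $E \subseteq \Omega$); second, that the $\pi_0(G_\infty)$-action is compatible on both sides so that the $\epsilon$-isotypic refinement in \eqref{gK} transfers directly to the cohomology side. The main obstacle I expect is the rigorous justification that \emph{only} $\pi$ contributes after localization, i.e.\ the exclusion of Eisenstein/residual cohomology at $\m_\pi^S$: while morally immediate from cuspidality and strong multiplicity one, making it precise requires either a careful comparison of Hecke eigensystems or citing Franke's description of the cohomology of arithmetic groups together with the classification of Eisenstein cohomology classes for $\PGL_2$ — everything else is a bookkeeping exercise with $(\mathfrak{g}, K_\infty)$-cohomology and base change.
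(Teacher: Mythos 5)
Your proposal is correct and follows essentially the same route as the paper: reduce to $\C$ by a base-change argument (the paper gets the coefficient-field comparison from the Borel--Serre compactification), then combine the standard automorphic/$(\mathfrak{g},K_\infty)$-cohomology decomposition with the dimension formula \eqref{gK} for the first claim, and strong multiplicity one (with the Eisenstein part excluded by comparing Hecke eigensystems, as in Harder's treatment) for the localization statement. The extra details you supply — Jacquet--Langlands transfer back to $\PGL_2$ and the explicit exclusion of Eisenstein classes — are exactly what the paper's citations to \cite{Ha} and ``strong multiplicity one'' are meant to cover.
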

  \begin{proof}
	One may deduce from the Borel-Serre compactification that the canonical map
	 \[
	\HH^i(\mathcal{X}_K,\underline{V_{\bm{k}}(E)^\vee})^\epsilon\otimes_E \Omega
	 \too  \HH^i(\mathcal{X}_K,\underline{V_{\bm{k}}(\Omega)^\vee})^\epsilon
	 \]
	is an isomorphism for every extensions $\Omega$ of $E$.
	Thus, we may reduce to the case $\Omega=\C$.
	In that case, the first claim follows from standard arguments about cohomological representations and equation \eqref{gK} (see for example Section III of \cite{Ha} for details in the case $G$ is split).
	
	The second claim follows from strong multiplicity one.
  \end{proof}

\subsubsection{Cohomology of arithmetic groups}
We are going to recast the above cohomology groups in terms of group cohomology.
Let $K\subseteq G(\A^\infty)$ be an open compact subgroup and $N$ an $\Omega[G(F)]$-module.
The group $G(F)$ acts on the space $\mathcal{F}(G(\A^\infty)/K,N)$ via $(\gamma.f)(g)=\gamma.f(\gamma^{-1}g)$ and the Hecke algebra $\T_K(\Omega)$ via right translation.
Thus we have commuting actions of the component group $\pi_0(G_\infty)$ and the Hecke algebra $\T_K(\Omega)=\Omega[K\backslash G(\A^\infty)/K]$ on the spaces
\[
 \HH^i(X_K,N):=\HH^i(G(F)^+,\mathcal{F}(G(\A^\infty)/K,N))
\]

A straightforward calculation shows the following:
 \begin{Lem}\label{coh1}
Suppose that $N$ is a $\Q$-vector space.
 There are canonical isomorphisms
  \[
	\HH^\bullet(X_K,N)\xlongrightarrow{\cong} \HH^\bullet(\mathcal{X}_K,\underline{N})
	\]
 that are equivariant with respect to the actions of the component group and the Hecke algebra.
 \end{Lem}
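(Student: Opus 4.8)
The plan is to express both sides as group cohomology and pass between them by Shapiro's lemma, using contractibility of the symmetric space at the infinite places to kill all the geometry. Write $Z:=G(\A)/KK_\infty^+=G(\A^\infty)/K\times G_\infty/K_\infty^+$, so that $\mathcal{X}_K=G(F)\backslash Z$ and, by construction, $\underline{N}$ is the sheaf on $\mathcal{X}_K$ associated to the $\Omega[G(F)]$-module $N$ via the quotient map $Z\to\mathcal{X}_K$. Since $G(\A^\infty)/K$ is discrete and $G_\infty^+/K_\infty^+$ is contractible (Cartan: a maximal compact subgroup of a connected Lie group), $Z$ is a disjoint union of contractible spaces; its set of connected components is $\pi_0(Z)=G(\A^\infty)/K\times\pi_0(G_\infty)$, on which $G(F)$ acts diagonally --- via the usual embedding on $G(\A^\infty)/K$ and through the surjection $G(F)\onto\pi_0(G_\infty)$ (surjectivity being the Hasse--Schilling norm theorem for $D$, with kernel $G(F)^+$) on $\pi_0(G_\infty)$.

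First I would feed $Z\to\mathcal{X}_K$ into the Cartan--Leray spectral sequence $\HH^p(G(F),\HH^q(Z,\underline{N}|_Z))\Rightarrow\HH^{p+q}(\mathcal{X}_K,\underline{N})$. The sheaf $\underline{N}|_Z$ is the constant sheaf $N$, and $Z$ is a disjoint union of contractible spaces, so $\HH^q(Z,\underline{N}|_Z)$ vanishes for $q>0$ and equals $\mathcal{F}(\pi_0(Z),N)$ for $q=0$; the spectral sequence therefore degenerates and gives a canonical isomorphism $\HH^i(\mathcal{X}_K,\underline{N})\cong\HH^i(G(F),\mathcal{F}(\pi_0(Z),N))$. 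Next, writing $\pi_0(G_\infty)\cong G(F)/G(F)^+$ as $G(F)$-sets, one identifies
\[
\mathcal{F}(\pi_0(Z),N)=\mathcal{F}\bigl(G(F)/G(F)^+,\mathcal{F}(G(\A^\infty)/K,N)\bigr)\cong\Coind_{G(F)^+}^{G(F)}\mathcal{F}(G(\A^\infty)/K,N),
\]
where the inner module $\mathcal{F}(G(\A^\infty)/K,N)$ carries the $G(F)^+$-action from the definition of $\HH^\bullet(X_K,N)$. Shapiro's lemma then yields $\HH^i(G(F),\mathcal{F}(\pi_0(Z),N))\cong\HH^i(G(F)^+,\mathcal{F}(G(\A^\infty)/K,N))=\HH^i(X_K,N)$, as desired. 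Finally one checks equivariance: the Hecke algebra $\T_K(\Omega)$ acts throughout by right translation on $G(\A^\infty)/K$ (equivalently by Hecke correspondences on $Z$), and $\pi_0(G_\infty)\cong K_\infty/K_\infty^+$ acts by right translation on the archimedean factor; since both commute with the left $G(F)$-action used in every step, equivariance of the composite isomorphism is automatic.

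The one point genuinely requiring the hypothesis that $N$ be a $\Q$-vector space --- and hence the main (modest) obstacle --- is that $G(F)$ need not act freely on $Z$ when $K$ is not neat, so that $Z\to\mathcal{X}_K$ is only an orbifold covering and the naive Cartan--Leray argument does not literally apply. I would circumvent this in one of the usual ways: either by first replacing $G(F)$ with a torsion-free normal subgroup of finite index (Selberg's lemma) and then taking invariants under the finite quotient, which loses no information in characteristic zero; or by passing to $G(F)$-equivariant cohomology $\HH^\bullet_{G(F)}(Z,N)$, which agrees with $\HH^\bullet(\mathcal{X}_K,\underline{N})$ once the coefficients are a $\Q$-module and stabilizers are finite, and for which the spectral sequence above holds unconditionally. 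Everything else --- the vanishing of higher cohomology of constant sheaves on contractible spaces, the degeneration of the spectral sequence, Shapiro's lemma, and the bookkeeping of the two commuting actions --- is formal, which is presumably why the authors call it a straightforward calculation.
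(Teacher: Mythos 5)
Your argument is correct and is exactly the ``straightforward calculation'' the paper leaves implicit: decompose $Z=G(\A^\infty)/K\times G_\infty/K_\infty^+$ into its contractible components, identify $\HH^0$ of the fibres with $\mathcal{F}(\pi_0(Z),N)\cong\Coind_{G(F)^+}^{G(F)}\mathcal{F}(G(\A^\infty)/K,N)$, apply Shapiro, and use the $\Q$-vector space hypothesis to neutralize finite stabilizers. The only step I would spell out a bit more is the $\pi_0(G_\infty)$-equivariance, since on the group-cohomology side the component group $\pi_0(G_\infty)\cong G(F)/G(F)^+$ acts through conjugation combined with the module action rather than by a literally ``commuting'' translation, but matching this with right translation by $K_\infty/K_\infty^+$ on $\mathcal{X}_K$ is a routine verification.
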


\subsection{Cohomology of $p$-arithmetic groups}\label{p-cohomology}
Let $\p$ be a prime of $F$ and $\Omega$ a field of characteristic zero.
Given an open compact subgroup $K^\p\subseteq G(\A\pinfty)$, an $\Omega[G_\p]$-module $M$ and an $\Omega[G(F)]$-module $N$
we let $G(F)$ act on the $\Omega$-vector space $\mathcal{F}(G(\A\pinfty)/K^\p,\Hom_\Omega(M,N))$ via $(\gamma.f)(g)(m)=\gamma.f(\gamma^{-1}g)(\gamma^{-1}(m))$
and put
\[
 \HH^i_{\Omega}(X^{\p}_{K^{\p}},M,N):=\HH^i(G(F)^+,\mathcal{F}(G(\A\pinfty)/K^\p,\Hom_\Omega(M,N))).
\]
Again one can define commuting actions of the component group $\pi_0(G_\infty)$ and the Hecke algebra 
$$\T^{\p}_{K^\p}(\Omega)=\Omega[K^\p\backslash G(\A\pinfty)/K^\p].$$

For later purposes we also define the following continuous variant:
let $A$ be an affinoid $\Q_p$-algebra.
Given a continuous $A$-module $M$ with a continuous $G_\p$-action and an $A[G(F)]$-module $N$ that is finitely generated and free over $A$ we put
\[
 \HH^i_{A,\cont}(X^{\p}_{K^{\p}},M,N):=\HH^i(G(F)^+,\mathcal{F}(G(\A\pinfty)/K^\p,\Hom_{A,\cont}(M,N))).
\]

Suppose that $M=M_1\otimes_\Omega M_2$ with both $M_1$ and $M_2$ being $\Omega[G_\p]$-modules.
Then by definition we have
\begin{align}\label{swap}
 \HH^i_{\Omega}(X^{\p}_{K^{\p}},M_1\otimes M_2,N)=\HH^i_{\Omega}(X^{\p}_{K^{\p}},M_1,\Hom_\Omega(M_2,N)),
\end{align}
where $G(F)$ acts on $M_2$ via the embedding $G(F)\into G_\p.$

Let us discuss an example of the above construction.
First we are going to recall the notion of compact induction:
let $K_\p\subset G_\p$ be an open compact subgroup and $L$ a $\Omega[K_\p]$-module.
The \emph{compact induction} $\cind_{K_\p}^{G_\p}L$ of $L$ to $G_\p$ is given by the set of functions $f\colon G_\p\to L$ that satisfy $f(gk) = k^{-1}.f(g)$ for all $g\in G_\p$, $k\in K_\p$ and have finite support modulo $K_\p$.
The group $G_\p$ acts on $\cind_{K_\p}^{G_\p}L$ via left translation.

If $L$ is a $\Omega[G_\p]$-module, the map
\begin{align}\label{cindisom}
L\otimes_\Omega \cind_{K_\p}^{G_\p}\Omega \too\cind_{K_\p}^{G_\p}L,\
l\otimes f\mapstoo [g\mapsto f(g)\cdot g^{-1}.l]
\end{align}
is a $G_\p$-equivariant isomorphism.
Thus in this case we get a canonical $\T^{\p}_{K^\p}(\Omega)$-equivariant isomorphisms
\begin{align}\label{relation}
\HH^i_{\Omega}(X^{\p}_{K^{\p}},\cind_{K_\p}^{G_\p}L,N)
\cong\HH^i(X_{K^{\p}\times K_\p},\Hom_\Omega(L,N)),
\end{align}
by \eqref{swap}.

We define
 \[
 \tilde{\HH}^i(X_{K^\p},N)=\varinjlim_{K_\p} \HH^i(X_{K^\p \times K_\p},N),
 \]
where the limit is taken over all open compact subgroups $K_\p\subseteq G_\p.$
This space carries commuting actions of $\pi_0(G_\infty)$, $G_\p$ and $\T^{\p}_{K^\p}(\Omega)$.
The goal of this section is to compare $\Hom_{\Omega[G_\p]}(M,\tilde{\HH}^i(X_{K^\p},N))$ and $\HH^i_{\Omega}(X^{\p}_{K^{\p}},M,N)$ in certain situations.

\begin{Lem}\label{ses}
Let $M$ be a smooth $\Omega[G_\p]$-representation of finite length.
Then, there exists a resolution
$$0\too P_1\too P_0\too M\too 0$$
where $P_i$, $i=0,1$, are finitely generated projective smooth representations.
\end{Lem}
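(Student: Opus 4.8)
The plan is to exhibit $M$ as a quotient of a finitely generated projective smooth $\Omega[G_\p]$-representation $P_0$ and then argue that the kernel is again finitely generated projective. Recall that the category of smooth $\Omega[G_\p]$-representations has enough projectives: the compact inductions $\cind_{K_\p}^{G_\p}\Omega$ for $K_\p\subseteq G_\p$ open compact are projective objects (this is a standard consequence of Frobenius reciprocity, since $\Hom_{G_\p}(\cind_{K_\p}^{G_\p}\Omega,V)\cong V^{K_\p}$ and taking invariants is exact on smooth representations as $\Omega$ has characteristic zero), and every smooth representation is a quotient of a direct sum of such. Since $M$ has finite length, it is in particular finitely generated: choosing finitely many generators, each of which is fixed by some open compact subgroup, and replacing all the chosen subgroups by their intersection $K_\p$, we obtain a surjection $P_0 = (\cind_{K_\p}^{G_\p}\Omega)^{\oplus r}\onto M$ from a finitely generated projective smooth representation.

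Let $P_1=\Ker(P_0\onto M)$. The content of the lemma is that $P_1$ is again finitely generated and projective, i.e.\ that $G_\p$ has \emph{cohomological dimension one} on the category of smooth $\Omega$-representations. For this I would invoke the structure theory: $G_\p\cong\PGL_2(F_\p)$ acts on its Bruhat--Tits tree, a contractible one-dimensional simplicial complex, and the associated augmented chain complex
\[
0\too C_1\too C_0\too \Omega\too 0
\]
of smooth $\Omega[G_\p]$-modules gives a length-one resolution of the trivial representation by representations induced from the vertex and edge stabilizers; these stabilizers are compact mod centre, hence (the centre being trivial here) compact open up to finite index, so $C_0$ and $C_1$ are projective smooth representations. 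Tensoring this resolution with $P_0$ over $\Omega$ (with the diagonal $G_\p$-action) and using that $P_0$ is flat over $\Omega$, one sees that $P_0$ itself admits a length-one projective resolution; a diagram chase (or the fact that over a projective module the syzygy of any module is projective in a hereditary situation) then shows $P_1$ is projective. Finite generation of $P_1$ follows since $P_0$ is Noetherian as a smooth representation — $\cind_{K_\p}^{G_\p}\Omega$ is a Noetherian object because its endomorphism ring is the Hecke algebra $\mathcal{H}(G_\p,K_\p)$, which is Noetherian, or more simply because any smooth subrepresentation of $P_0$ generated by its $K_\p'$-invariants for varying $K_\p'$ stabilizes — so the submodule $P_1$ is finitely generated.

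The main obstacle is the projectivity of $P_1$; finite generation is comparatively soft. Concretely one must be careful that the Bruhat--Tits argument is really needed only for the trivial representation and then bootstrapped via the tensor-product trick, rather than trying to resolve $M$ directly. An alternative, perhaps cleaner, route avoids the tree: use that for $\PGL_2(F_\p)$ the category of smooth representations decomposes into Bernstein blocks, each equivalent to the module category over a finitely generated algebra of global dimension one (affine Hecke algebras of rank one / extended affine Weyl group of $\mathrm{PGL}_2$, whose blocks have global dimension $\le 1$); hereditariness of these algebras then immediately yields that submodules of projectives are projective, and finite generation is preserved since the algebras are Noetherian. I would present whichever of these is shortest to state, most likely citing the tree resolution of the trivial representation together with the flat base change, and then conclude that $0\to P_1\to P_0\to M\to 0$ is the desired resolution.
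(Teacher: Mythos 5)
Your argument covers only the case in which $D$ splits at $\p$: from the first line on you take $G_\p\cong\PGL_2(F_\p)$ and use its Bruhat--Tits tree (or its Bernstein blocks). But the lemma is stated, and later used (Proposition \ref{smoothduals}, Corollary \ref{irreducible}), for an arbitrary prime $\p$, including $\p\in\ram(D)$, where $G_\p=D_\p^\times/F_\p^\times$ is compact and none of your structure theory applies. That case is easy -- for a compact group the category of smooth representations in characteristic zero is semisimple, so $M$ is itself projective and one takes $P_0=M$, $P_1=0$, which is exactly how the paper dispatches it -- but as written your proof has a genuine hole there. Within the split case there are two further soft spots. First, the tensoring step is misstated: tensoring the augmented chain complex of the tree with $P_0$ gives no information ($P_0$ is already projective); what you need is to tensor it with $M$ (or with an arbitrary smooth representation) to conclude that every smooth $\Omega[G_\p]$-representation has projective dimension at most one, and then Schanuel's lemma or dimension shifting makes the syzygy $P_1$ projective. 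Second, finite generation of $P_1$ is not ``comparatively soft'': your remark about invariants stabilizing is not an argument, and what is really being used is Noetherianity of the Hecke algebras $\mathcal{H}(G_\p,K_\p)$, equivalently local Noetherianity of the smooth category -- a theorem of Bernstein that should be cited as such.

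Once these points are repaired the proof is correct, and it is instructive to compare it with the paper's. In the split case the paper simply invokes the main theorem of \cite{SS} (Schneider--Stuhler), which produces, for any finitely generated smooth representation, a resolution by finitely generated projective representations of length equal to the dimension of the building, hence of length one for $\PGL_2(F_\p)$; projectivity and finite generation of the resolving objects come packaged together. Your route reconstructs this from the same geometric input -- the contractibility of the tree -- but splits the work into three pieces: a finitely generated projective cover coming from compact inductions, hereditariness of the category via the tree, and Noetherianity via Bernstein. This is a perfectly valid alternative (as is the Bernstein-block/affine Hecke algebra variant, which again amounts to the statement that every block of $\PGL_2(F_\p)$ has global dimension at most one), but it is heavier than the one-line citation and, contrary to first appearances, does not avoid the Bruhat--Tits tree any more than the paper's reference does.
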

\begin{proof}
In case $\p\in\ram(D)$ the group $G_\p$ is compact.
Thus, the category of smooth representations of $G_\p$ with $\Omega$-coefficients is semi-simple and $M$ is projective itself.

In case $D$ is split at $\p$, and therefore $G_\p\cong \PGL_2(F_\p)$, this is a consequence of the main theorem of \cite{SS}.
\end{proof}

Let $\pi\pinfty$ be the restricted tensor product of all local components of $\pi$ away from $\p$ and $\infty$.
This is a smooth irreducible representation of $G(\A\pinfty)$.
We fix models $\pi_E\pinfty$ respectively $\pi_{\p,E}$ of $\pi\pinfty$ respectively $\pi_\p$ over $E$ and a $G(\A^\infty)$-equivariant isomorphism
$$\pi\pinfty_E \otimes_E \pi_{\p,E}\cong \pi^\infty_E. $$
From now on $\Omega$ will always be an extension of $E$ and we put
$\pi_\Omega\pinfty=\pi_E\pinfty\otimes_E \Omega$ as well as  $\pi_{\p,\Omega}=\pi_{\p,E}\otimes_E \Omega$.
Further, we assume that $K^\p$ is of the form $K^{\p}=\prod_{\q\neq\p} K_\q$ and that $(\pi_\Omega\pinfty)^{K^{\p}}\neq 0$.
We fix a finite set $S$ of primes of $F$ as before such that $\p\in S.$
\begin{Pro}\label{smoothduals}
Let $M$ be a smooth $\Omega[G_\p]$-representation of finite length.
There are isomorphisms
$$\HH^q_{\Omega}(X^{\p}_{K^{\p}},M,V_{\bm{k}}(\Omega)^\vee)_{\m_\pi^S}\xlongrightarrow{\cong} \Hom_{\Omega[G_\p]}(M,\tilde{\HH}^q(X_{K^\p},V_{\bm{k}}(\Omega)^\vee)_{\m_\pi^S})$$
that are functorial in $M$ and equivariant under the actions of $\T^\p_{K^\p}(\Omega)$ and $\pi_0(G_\infty)$.
Furthermore, we have
\[
\dim_\Omega \HH^d_{\Omega}(X^{\p}_{K^{\p}},M,V_{\bm{k}}(\Omega)^\vee) < \infty
\]
for all $d \geq 0$.
\end{Pro}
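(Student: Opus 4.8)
Throughout write $N=V_{\bm{k}}(\Omega)^\vee$. The plan is to produce a natural transformation from the left-hand side to the right-hand side of the asserted isomorphism, to check that it is an isomorphism when $M$ is compactly induced, to extend this by additivity to all finitely generated projective smooth $\Omega[G_\p]$-representations, and finally to treat arbitrary finite length $M$ by a dévissage using the resolution of Lemma \ref{ses} and the vanishing encoded in Theorem \ref{dim}; the finiteness statement will drop out of the same computation for compact inductions. For the transformation, identify $\mathcal{F}(G(\A\pinfty)/K^\p,\Hom_\Omega(M,N))$ with $\Hom_\Omega(M,\mathcal{F}(G(\A\pinfty)/K^\p,N))$ as $G(F)^+$-modules (the group acting on $M$ through $G(F)\into G_\p$), and note that the formula
\[
\Phi(\psi)(m)(g^\p,g_\p)=\psi(g_\p m)(g^\p)
\]
sends a $G(F)^+$-cocycle $\psi$ for $\HH^i_\Omega(X^\p_{K^\p},M,N)$ to an assignment $m\mapsto\Phi(\psi)(m)$ such that, for $m$ fixed by an open subgroup $K_\p\subseteq G_\p$, $\Phi(\psi)(m)$ is a $G(F)^+$-cocycle with values in $\mathcal{F}(G(\A^\infty)/(K^\p\times K_\p),N)$, and $m\mapsto[\Phi(\psi)(m)]$ is $\Omega$-linear and $G_\p$-equivariant. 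One checks that this descends to a natural transformation
\[
\HH^i_\Omega(X^\p_{K^\p},M,N)\too\Hom_{\Omega[G_\p]}\bigl(M,\tilde{\HH}^i(X_{K^\p},N)\bigr)
\]
equivariant for $\T^\p_{K^\p}(\Omega)$ and $\pi_0(G_\infty)$; here one uses that $\Hom_\Omega(M,-)$ is exact and commutes with group cohomology and that, $M$ being finitely presented, $\Hom_{\Omega[G_\p]}(M,-)$ commutes with the colimit defining $\tilde{\HH}^i$. Since $M$ is finitely presented, localization at $\m_\pi^S$ commutes with $\Hom_{\Omega[G_\p]}(M,-)$, so localizing yields the map of the statement.

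For $M=\cind_{K_\p}^{G_\p}\Omega$ the source is canonically $\HH^i(X_{K^\p\times K_\p},N)$ by \eqref{relation} with $L=\Omega$, and by Frobenius reciprocity for compact induction from an open subgroup the target is $\tilde{\HH}^i(X_{K^\p},N)^{K_\p}$; the latter is again canonically $\HH^i(X_{K^\p\times K_\p},N)$, because taking $K_\p$-invariants of a smooth representation commutes with the filtered colimit defining $\tilde{\HH}^i$ and $\HH^i(X_{K^\p\times K_\p'},N)^{K_\p/K_\p'}=\HH^i(X_{K^\p\times K_\p},N)$ for $K_\p'\trianglelefteq K_\p$ by the transfer argument in characteristic zero. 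A cochain-level comparison shows that the transformation above realizes the identity under these two identifications, hence it is an isomorphism for $M=\cind_{K_\p}^{G_\p}\Omega$ in every cohomological degree; as both sides carry finite direct sums to finite products and preserve direct summands, it is an isomorphism for every finitely generated projective smooth $\Omega[G_\p]$-module, in every degree. Moreover $\HH^i(X_{K^\p\times K_\p},N)\cong\HH^i(\mathcal{X}_{K^\p\times K_\p},\underline{N})$ by Lemma \ref{coh1}, which is finite-dimensional by finiteness of the cohomology of the Borel--Serre compactification with coefficients in the finite-dimensional local system $\underline{N}$; so $\HH^d_\Omega(X^\p_{K^\p},M,N)$ is finite-dimensional for every finitely generated projective $M$ and every $d$.

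Now let $M$ be of finite length, and choose a resolution $0\to P_1\to P_0\to M\to 0$ with $P_0,P_1$ finitely generated projective (Lemma \ref{ses}). Since $\Omega$ is a field, $0\to\Hom_\Omega(M,N)\to\Hom_\Omega(P_0,N)\to\Hom_\Omega(P_1,N)\to 0$ is exact, so there is a long exact sequence in $\HH^\bullet_\Omega(X^\p_{K^\p},-,N)$ whose $P_0$- and $P_1$-terms are finite-dimensional by the previous paragraph; hence $\HH^d_\Omega(X^\p_{K^\p},M,N)$ is finite-dimensional for all $d$, which is the second assertion. For the first, localize this long exact sequence at $\m_\pi^S$. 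By Theorem \ref{dim} and Lemma \ref{coh1} one has $\HH^i(X_{K^\p\times K_\p},N)_{\m_\pi^S}=0$ for $i<q$ (the relevant $\Hom$-space has dimension $\binom{\delta}{i-q}=0$), so $\tilde{\HH}^{q-1}(X_{K^\p},N)_{\m_\pi^S}=0$ and therefore $\HH^{q-1}_\Omega(X^\p_{K^\p},P_1,N)_{\m_\pi^S}=0$ by the first two paragraphs. Hence the localized sequence begins
\[
0\too\HH^q_\Omega(X^\p_{K^\p},M,N)_{\m_\pi^S}\too\HH^q_\Omega(X^\p_{K^\p},P_0,N)_{\m_\pi^S}\too\HH^q_\Omega(X^\p_{K^\p},P_1,N)_{\m_\pi^S},
\]
so $\HH^q_\Omega(X^\p_{K^\p},M,N)_{\m_\pi^S}$ is the kernel of the last map; and left-exactness of $\Hom_{\Omega[G_\p]}\bigl(-,\tilde{\HH}^q(X_{K^\p},N)_{\m_\pi^S}\bigr)$ applied to $P_1\to P_0\to M\to 0$ identifies $\Hom_{\Omega[G_\p]}\bigl(M,\tilde{\HH}^q(X_{K^\p},N)_{\m_\pi^S}\bigr)$ with the corresponding kernel on the other side. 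Since the natural transformation is an isomorphism on $P_0$ and $P_1$ and intertwines the two maps whose kernels are being compared, it induces the isomorphism asserted in the statement, functorially and equivariantly.

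The dévissage and the additivity reductions are formal; the work is concentrated in the first two paragraphs. The delicate points are the construction of $\Phi$ and---above all---the verification that, for $M=\cind_{K_\p}^{G_\p}\Omega$, it coincides with the concrete isomorphism coming from \eqref{relation} and Frobenius reciprocity, which is a routine but bookkeeping-heavy cochain computation. One also needs the compatibility of $\m_\pi^S$-localization with the colimit defining $\tilde{\HH}^\bullet$ and with $\Hom_{\Omega[G_\p]}(M,-)$; this rests on finite length smooth $G_\p$-representations being finitely presented (the category of smooth $G_\p$-representations is Noetherian, respectively $G_\p$ is compact).
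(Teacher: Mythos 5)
Your argument is correct, and its skeleton is the same as the paper's: establish the comparison isomorphism for projective smooth representations in all degrees, use Theorem \ref{dim} to kill the localized cohomology of projectives below degree $q$, and then run the resolution of Lemma \ref{ses} through the (localized) long exact sequence together with left-exactness of $\Hom_{\Omega[G_\p]}(-,\tilde{\HH}^q(X_{K^\p},V_{\bm{k}}(\Omega)^\vee)_{\m_\pi^S})$ to compare kernels; the finiteness statement is obtained by the same dévissage. The one genuine difference is how the projective case is handled: the paper simply cites \cite{Ge4}, Lemma 3.5 (b), whereas you make it self-contained by constructing the comparison map at the cochain level, proving the case $M=\cind_{K_\p}^{G_\p}\Omega$ exactly as in Remark \ref{rem} (the chain \eqref{relation}, Hochschild--Serre/invariants, Frobenius reciprocity), and then observing that any finitely generated projective smooth representation is a direct summand of a finite sum of such compact inductions (generators are fixed by open compacts, and projectivity splits the resulting surjection --- a step you should state explicitly, since the $P_i$ from Lemma \ref{ses} are not literally compact inductions). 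This buys independence from the external reference at the cost of the bookkeeping you acknowledge; conversely, be slightly more careful with the phrase that $\Hom_\Omega(M,-)$ ``commutes with group cohomology'': since $G(F)$ also acts on $M$ through $G_\p$, this is not a formal exactness statement, but your explicit evaluation map $\phi\mapsto[(g^\p,g_\p)\mapsto\phi(g^\p)(g_\p m)]$ is indeed $G(F)^+$-equivariant for each fixed $m\in M^{K_\p}$, which is what actually makes the construction descend to cohomology; similarly, the compatibility of localization with $\Hom_{\Omega[G_\p]}(P_i,-)$ is cleanest via the idempotent $e_{K_\p}$ (so $\Hom(\cind_{K_\p}^{G_\p}\Omega,V)=e_{K_\p}V$ commutes with localization), rather than via general finite presentation.
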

\begin{proof}
If $M=P$ is projective, there are functorial isomorphisms
\begin{align}\label{proj}
\HH^d_{\Omega}(X^{\p}_{K^{\p}},P,V_{\bm{k}}(\Omega)^\vee)\xlongrightarrow{\cong} \Hom_{\Omega[G_\p]}(P,\tilde{\HH}^d(X_{K^\p},V_{\bm{k}}(\Omega)^\vee))
\end{align}
for all $d\geq 0$ by \cite{Ge4}, Lemma 3.5 (b).
In particular, we have 
$$\HH^d_{\Omega}(X^{\p}_{K^{\p}},P,V_{\bm{k}}(\Omega)^\vee)_{\m_\pi^{S}}=0$$
for all $d<q$ by Theorem \ref{dim}.

Thus the short exact sequence
$$0\too P_1\too P_0\too M\too 0$$
of Lemma \ref{ses} induces the exact sequence
$$0\to\HH^q_{\Omega}(X^{\p}_{K^{\p}},M,V_{\bm{k}}(\Omega)^\vee)_{\m_\pi^{S}}
\to \HH^q_{\Omega}(X^{\p}_{K^{\p}},P_0,V_{\bm{k}}(\Omega)^\vee)_{\m_\pi^{S}}
\to \HH^q_{\Omega}(X^{\p}_{K^{\p}},P_1,V_{\bm{k}}(\Omega)^\vee)_{\m_\pi^{S}}$$
and the first claim follows from the isomorphism \eqref{proj} for $P=P_0, P_1$.

The second claim follows similarly.
\end{proof}

\begin{Rem}\label{rem}
A typical projective smooth representation is the compact induction $\cind_{K_\p}^{G_\p}\Omega$ of the trivial representation from an open compact subgroup $K_\p\subseteq G_\p$.
In that case the result of \cite{Ge4} used above simply follows from the composition of isomorphisms
\begin{align*}
\HH^d_{\Omega}(X^{\p}_{K^{\p}},\cind_{K_\p}^{G_\p}\Omega,N)
\cong &\HH^d_{\Omega}(X_{K^{\p}\times K_\p},N)\\
\cong &\tilde{\HH}^d(X_{K^\p},N)^{K_\p}\\
\cong &\Hom_{\Omega[G_\p]}(\cind_{K_\p}^{G_\p}\Omega,\tilde{\HH}^d(X_{K^\p},N)^{K_\p}),
\end{align*}
where the first isomorphism is a special case of \eqref{relation}, the second follows from the Hochschild-Serre spectral sequence and the third from Frobenius reciprocity. 
\end{Rem}

Proposition \ref{smoothduals} together with Theorem \ref{dim} implies the following:
\begin{Cor}\label{irreducible}
Let $M$ be an irreducible smooth $\Omega[G_\p]$-representation.
Then
\[
\HH^q_{\Omega}(X^{\p}_{K^{\p}},M,V_{\bm{k}}(\Omega)^\vee)^{\epsilon}_{\m_\pi^{S}}=0
\]
unless $M\cong \pi_{\p,\Omega}.$
Furthermore, there is an isomorphism
\[
\HH^q_{\Omega}(X^{\p}_{K^{\p}},\pi_{\p,\Omega},V_{\bm{k}}(\Omega)^\vee)^{\epsilon}_{\m_\pi^{S}}\cong(\pi_\Omega\pinfty)^{K_\p}
\]
of $\T^{\p}_{K^\p}(\Omega)$-modules. In particular, it is an absolutely irreducible $\T^{\p}_{K^\p}(\Omega)$-module.
\end{Cor}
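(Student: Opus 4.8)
The plan is to read the corollary off from Proposition~\ref{smoothduals} and Theorem~\ref{dim}, once the $\T^\p_{K^\p}(\Omega)[G_\p]$-module structure of $\tilde{\HH}^q(X_{K^\p},V_{\bm{k}}(\Omega)^\vee)^\epsilon_{\m_\pi^S}$ has been pinned down. First I would note that an irreducible smooth representation has finite length, so Proposition~\ref{smoothduals} applies to $M$. Taking $\epsilon$-isotypic components on both sides of the isomorphism it provides --- which is $\pi_0(G_\infty)$-equivariant, and on the target $\Hom_{\Omega[G_\p]}(M,-)$ commutes with the (finite, hence exact) decomposition into $\epsilon$-parts since $\pi_0(G_\infty)$ does not act on $M$ --- yields a $\T^\p_{K^\p}(\Omega)$-equivariant isomorphism
\[
\HH^q_{\Omega}(X^{\p}_{K^{\p}},M,V_{\bm{k}}(\Omega)^\vee)^\epsilon_{\m_\pi^S}\;\cong\;\Hom_{\Omega[G_\p]}\!\bigl(M,\,\tilde{\HH}^q(X_{K^\p},V_{\bm{k}}(\Omega)^\vee)^\epsilon_{\m_\pi^S}\bigr).
\]

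Next I would identify the target. For each open compact $K_\p\subseteq G_\p$, Lemma~\ref{coh1} lets me replace $\HH^q(X_{K^\p\times K_\p},V_{\bm{k}}(\Omega)^\vee)$ by the sheaf cohomology $\HH^q(\mathcal{X}_{K^\p\times K_\p},\underline{V_{\bm{k}}(\Omega)^\vee})$, and Theorem~\ref{dim} then says that the $\m_\pi^S$-localized $\epsilon$-part is the sum of the images of all homomorphisms from $(\pi_\Omega^\infty)^{K^\p\times K_\p}$, of which the space is one-dimensional. Since $\pi^\infty$ is a cuspidal automorphic representation, $\pi^\infty_\C\cong\pi_E^\infty\otimes_E\C$ is irreducible, so $\pi_E^\infty$ is an absolutely irreducible $E$-model; consequently $(\pi_\Omega^\infty)^{K^\p\times K_\p}$ is an absolutely simple $\T_{K^\p\times K_\p}(\Omega)$-module and the localization above is isomorphic to it. These isomorphisms are compatible with shrinking $K_\p$, so passing to the colimit over $K_\p$ (which commutes with localization and with taking $\epsilon$-parts) and using the fixed factorization $\pi^\infty_E\cong\pi\pinfty_E\otimes_E\pi_{\p,E}$ gives a $\T^\p_{K^\p}(\Omega)[G_\p]$-isomorphism
\[
\tilde{\HH}^q(X_{K^\p},V_{\bm{k}}(\Omega)^\vee)^\epsilon_{\m_\pi^S}\;\cong\;\pi_{\p,\Omega}\otimes_\Omega(\pi_\Omega\pinfty)^{K^\p},
\]
with $G_\p$ acting on the first factor and $\T^\p_{K^\p}(\Omega)$ on the second. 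That the right-translation $G_\p$-action on the colimit is $\pi_{\p,\Omega}$ itself, and not a twist or its contragredient, is forced by the choice of models (and in any case $\pi_{\p,\Omega}$ is self-contragredient in the situations of interest).

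Granting this, set $W=(\pi_\Omega\pinfty)^{K^\p}$, on which $G_\p$ acts trivially. Then $\Hom_{\Omega[G_\p]}(M,\pi_{\p,\Omega}\otimes_\Omega W)=\Hom_{\Omega[G_\p]}(M,\pi_{\p,\Omega})\otimes_\Omega W$, and Schur's lemma together with absolute irreducibility of the model $\pi_{\p,E}$ gives $\End_{\Omega[G_\p]}(\pi_{\p,\Omega})=\Omega$; hence the first display vanishes unless $M\cong\pi_{\p,\Omega}$, and equals $W$ otherwise, which is exactly the asserted dichotomy and the isomorphism $\HH^q_{\Omega}(X^{\p}_{K^{\p}},\pi_{\p,\Omega},V_{\bm{k}}(\Omega)^\vee)^\epsilon_{\m_\pi^S}\cong(\pi_\Omega\pinfty)^{K^\p}$ of $\T^\p_{K^\p}(\Omega)$-modules. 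Finally, $\pi\pinfty$ is irreducible admissible with $\End_{E[G(\A\pinfty)]}(\pi\pinfty_E)=E$, so by full faithfulness of the $K^\p$-invariants functor on representations generated by their $K^\p$-fixed vectors, $(\pi\pinfty_E)^{K^\p}$ is an absolutely simple $\T^\p_{K^\p}(E)$-module, whence $W$ is an absolutely simple $\T^\p_{K^\p}(\Omega)$-module. I expect the only genuinely non-formal point to be the identification of $\tilde{\HH}^q(X_{K^\p},\cdot)_{\m_\pi^S}$ as a $G_\p$-representation, which merely repackages strong multiplicity one, Jacquet--Langlands and the $(\mathfrak{g},K_\infty)$-cohomology computation already encoded in Theorem~\ref{dim} and the Setup; everything else is bookkeeping.
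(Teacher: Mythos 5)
Your argument is correct and is essentially the paper's own route: the corollary is stated there as an immediate consequence of Proposition \ref{smoothduals} combined with Theorem \ref{dim}, which is exactly the combination you spell out (identifying $\tilde{\HH}^q(X_{K^\p},V_{\bm{k}}(\Omega)^\vee)^\epsilon_{\m_\pi^S}$ with $\pi_{\p,\Omega}\otimes_\Omega(\pi_\Omega\pinfty)^{K^\p}$ and then applying Schur's lemma). Your write-up merely supplies the bookkeeping the paper leaves implicit, and it does so accurately.
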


\begin{Rem}
The corollary above was implicitly proven in \cite{Sp} under the assumption that the local representation $\pi_\p$ has an Iwahori-fixed vector via computations with explicit resolution of $\pi_\p$ coming from the Bruhat--Tits tree.
\end{Rem}

\subsection{The Steinberg case}\label{steinberg}
We now assume that $D$ is split at $\p$.
We define the \emph{$\Omega$-valued smooth Steinberg representation} $\St_\p^\infty(\Omega)$ of $G_\p$ as the space of locally constant $\Omega$-valued functions on $\mathbb{P}^1(F_\p)$ modulo constant functions.
The group $G_\p\cong \PGL_2(F_\p)$ naturally acts on $\mathbb{P}^1(F_\p)$ and thus also on $\St_\p^\infty(\Omega).$

We assume for the moment that $\pi_\p=\St_\p^\infty(\C).$
Then by Corollary \ref{irreducible} we know that
$\HH^q_{\Omega}(X^{\p}_{K^{\p}},\St_\p^\infty(\Omega),V_{\bm{k}}(\Omega)^\vee)^{\epsilon}_{\m_\pi^{S}}$ is an irreducible $\T^{\p}_{K^\p}(\Omega)$-module.

Given smooth $\Omega$-representations $V$ and $W$ of $G_\p$ we denote by $\Ext^i_\infty(V,W)$ the Ext-groups in the category of smooth representations.
It is well known that
\begin{align}\begin{split}\label{one}
\dim_\Omega \Ext^i_\infty(\Omega,\St_\p^\infty(\Omega))=
\begin{cases} 1 & \mbox{if}\ i=1\\
0 & \text{otherwise}
\end{cases}
\end{split}
\end{align}
and
\begin{align}\begin{split}\label{two}
\dim_\Omega \Ext^i_\infty(\St_\p^\infty(\Omega),\St_\p^\infty(\Omega))=
\begin{cases} 1 & \mbox{if}\ i=0\\
0 & \text{otherwise}.
\end{cases}
\end{split}
\end{align}

The above calculations follow directly from the existence of the following two projective resolutions:
$$0\too \cind_{J_\p}^{G_\p}\chi_{-} \too \cind_{K_\p}^{G_\p} \Omega \too \Omega\too 0$$
and
$$0\too \cind_{K_\p}^{G_\p} \Omega\too \cind_{J_\p}^{G_\p} \chi_{-} \too \St_\p^\infty(\Omega)\too 0.$$
Here $K_\p=\PGL_2(\mathcal{O}_\p)$ is a maximal compact subgroup, $J_\p\subset G_\p$ is an open compact subgroup that contains an Iwahori subgroup as a normal subgroup and $\chi_{-}\colon J_\p\to \{\pm 1\}$ is a non-trivial character that is trivial on the Iwahori subgroup.
The exactness of the first sequence is just a reformulation of the contractibility of the Bruhat--Tits tree.
The second follows from the fact that the cohomology with compact support of the Bruhat--Tits tree is the Steinberg representation (see for example \cite{Sp}, equation (18)).
To keep with our promise not to use the Bruhat-Tits tree let us mention that alternative calculations of these Ext-groups that do not invoke the Bruhat--Tits tree - and work for more general reductive groups - can be found in \cite{Orlik} and \cite{Dat}.

Let $\mathcal{E}^\infty$ be the unique (up to multiplication by a scalar) extension of $\Omega$ by $\St_\p^\infty(\Omega)$, i.e., there exists a non-split exact sequence
\begin{align}\label{short}
0 \too \St_\p^\infty(\Omega) \too \mathcal{E}^\infty \too \Omega \too 0.
\end{align}
of $G_\p$-modules.

We denote by
$$c_{\infty}^\epsilon\colon\HH^q_{\Omega}(X^{\p}_{K^{\p}},\St_\p^\infty(\Omega),V_{\bm{k}}(\Omega)^\vee)^\epsilon_{\m_\pi^{S}}\too
\HH^{q+1}_{\Omega}(X^{\p}_{K^{\p}},\Omega,V_{\bm{k}}(\Omega)^\vee)^\epsilon_{\m_\pi^{S}}
$$
the (localization of the) boundary homomorphism of the long exact sequence induced by \eqref{short}.
It is a homomorphism of $\T^{\p}_{K^\p}(\Omega)$-modules.

The following generalization of \cite{Sp}, Lemma 6.2 (b) holds:
\begin{Lem}\label{ordiso}
The map
$$c_{\infty}^\epsilon\colon\HH^d_{\Omega}(X^{\p}_{K^{\p}},\St_\p^\infty(\Omega),V_{\bm{k}}(\Omega)^\vee)^\epsilon_{\m_\pi^{S}}\too
\HH^{d+1}_{\Omega}(X^{\p}_{K^{\p}},\Omega,V_{\bm{k}}(\Omega)^\vee)^\epsilon_{\m_\pi^{S}}
$$
is an isomorphism for every sign character $\epsilon$ and every $d\geq 0$.
\end{Lem}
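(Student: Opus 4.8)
The plan is to reduce the assertion to a vanishing statement and then to prove that vanishing by splicing the two projective resolutions displayed above into a two‑term projective resolution of $\mathcal{E}^\infty$. First I would apply $\Hom_\Omega(-,V_{\bm{k}}(\Omega)^\vee)$ — an exact functor, since $\Omega$ is a field — to \eqref{short} and pass to the associated long exact sequence of the groups $\HH^\bullet_\Omega(X^{\p}_{K^{\p}},-,V_{\bm{k}}(\Omega)^\vee)$; localizing at $\m_\pi^S$ and taking $\epsilon$‑parts (both exact operations) yields a long exact sequence in which $c_\infty^\epsilon$ in degree $d$ is a connecting homomorphism and the two terms adjacent to it are $\HH^{d}_\Omega(X^{\p}_{K^{\p}},\mathcal{E}^\infty,V_{\bm{k}}(\Omega)^\vee)^\epsilon_{\m_\pi^S}$ and $\HH^{d+1}_\Omega(X^{\p}_{K^{\p}},\mathcal{E}^\infty,V_{\bm{k}}(\Omega)^\vee)^\epsilon_{\m_\pi^S}$. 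Hence it is enough to show that $\HH^{d}_\Omega(X^{\p}_{K^{\p}},\mathcal{E}^\infty,V_{\bm{k}}(\Omega)^\vee)_{\m_\pi^S}=0$ for all $d\geq 0$.

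The key step is to construct a projective resolution $0\to\cind_{K_\p}^{G_\p}\Omega\to\cind_{K_\p}^{G_\p}\Omega\to\mathcal{E}^\infty\to 0$. I would take the composite $\cind_{K_\p}^{G_\p}\Omega\hookrightarrow\cind_{J_\p}^{G_\p}\chi_-\hookrightarrow\cind_{K_\p}^{G_\p}\Omega$ of the first maps in the two displayed resolutions (of $\St_\p^\infty(\Omega)$ and of $\Omega$, respectively); it is injective, and a short computation with the two short exact sequences shows that its cokernel $Q$ fits into an exact sequence $0\to\St_\p^\infty(\Omega)\to Q\to\Omega\to 0$. Now $Q$ is a quotient of $\cind_{K_\p}^{G_\p}\Omega$, while $\Hom_{G_\p}(\cind_{K_\p}^{G_\p}\Omega,\St_\p^\infty(\Omega))=(\St_\p^\infty(\Omega))^{K_\p}=0$ because the Steinberg representation has no non-zero $\PGL_2(\mathcal{O}_\p)$-fixed vector; hence $\St_\p^\infty(\Omega)$ is not a quotient of $Q$, the displayed extension of $\Omega$ by $\St_\p^\infty(\Omega)$ is non-split, and $Q\cong\mathcal{E}^\infty$ by the uniqueness of $\mathcal{E}^\infty$. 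This gives the desired resolution.

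It then remains to note that $\HH^d_\Omega(X^{\p}_{K^{\p}},\cind_{K_\p}^{G_\p}\Omega,V_{\bm{k}}(\Omega)^\vee)_{\m_\pi^S}=0$ for all $d$. By \eqref{relation} (with $L=\Omega$) and Lemma \ref{coh1} this group is isomorphic to $\HH^d(\mathcal{X}_{K^\p\times K_\p},\underline{V_{\bm{k}}(\Omega)^\vee})_{\m_\pi^S}$, and since $\pi_\p=\St_\p^\infty(\C)$ has no non-zero $K_\p$-fixed vector we have $(\pi_\Omega^\infty)^{K^\p\times K_\p}=0$, so this localization vanishes by strong multiplicity one and the cuspidality of $\pi$ (cf.\ the proof of Theorem \ref{dim}). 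Feeding the resolution of $\mathcal{E}^\infty$ into the long exact sequence for $\HH^\bullet_\Omega(X^{\p}_{K^{\p}},-,V_{\bm{k}}(\Omega)^\vee)_{\m_\pi^S}$ then forces $\HH^d_\Omega(X^{\p}_{K^{\p}},\mathcal{E}^\infty,V_{\bm{k}}(\Omega)^\vee)_{\m_\pi^S}=0$ for every $d$, and the lemma follows.

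The only non-formal point — and the one I expect to be the real content — is the construction of the resolution in the second step: $\mathcal{E}^\infty$ has a composition factor, namely $\St_\p^\infty(\Omega)=\pi_{\p,\Omega}$, whose $\m_\pi^S$-localized cohomology is non-zero, and yet $\mathcal{E}^\infty$ nonetheless admits a projective resolution built out of $\cind_{K_\p}^{G_\p}\Omega$ alone — one must avoid $\cind_{J_\p}^{G_\p}\chi_-$, whose $\m_\pi^S$-localized cohomology does not vanish because $\pi_\p$ has Iwahori-fixed vectors. Everything else is formal manipulation of long exact sequences, and the cohomological weight $\bm{k}$ enters only through the fixed coefficient module $V_{\bm{k}}(\Omega)^\vee$, so the argument is the natural generalization of the parallel weight $2$ case.
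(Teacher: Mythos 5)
Your proof is correct, and it reaches the paper's reduction but then diverges at the key step. Like the paper, you reduce the lemma, via the long exact sequence attached to \eqref{short}, to the vanishing $\HH^d_{\Omega}(X^{\p}_{K^{\p}},\mathcal{E}^\infty,V_{\bm{k}}(\Omega)^\vee)_{\m_\pi^{S}}=0$ for all $d\geq 0$. The paper proves this by taking an arbitrary two-term projective resolution of $\mathcal{E}^\infty$ as in Lemma \ref{ses}, identifying $\HH^d_{\Omega}(X^{\p}_{K^{\p}},P_i,V_{\bm{k}}(\Omega)^\vee)$ with $\Hom_{\Omega[G_\p]}(P_i,\tilde{\HH}^d(X_{K^\p},V_{\bm{k}}(\Omega)^\vee))$ via \cite{Ge4}, Lemma 3.5 (b), using Theorem \ref{dim} to see that the localized $\tilde{\HH}^d$ is a sum of copies of $\St_\p^\infty(\Omega)$, and then invoking the Ext-computations \eqref{one} and \eqref{two} to get $\Ext^d_\infty(\mathcal{E}^\infty,\St_\p^\infty(\Omega))=0$. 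You instead splice the two displayed resolutions into an explicit resolution $0\to\cind_{K_\p}^{G_\p}\Omega\to\cind_{K_\p}^{G_\p}\Omega\to\mathcal{E}^\infty\to 0$; your cokernel computation and the non-splitness argument via $\Hom_{G_\p}(\cind_{K_\p}^{G_\p}\Omega,\St_\p^\infty(\Omega))=\St_\p^\infty(\Omega)^{K_\p}=0$ are both correct, and then the vanishing follows from \eqref{relation} together with the fact that the $\m_\pi^S$-localized cohomology at level $K^\p\times K_\p$ dies because $\pi_\p$ has no $K_\p$-fixed vector. What your route buys is economy: it bypasses \cite{Ge4}, Lemma 3.5 (b), the auxiliary module $\tilde{\HH}^d$, and the smooth Ext-groups entirely, using only spherical-level vanishing. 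Two small caveats: Theorem \ref{dim} is stated under the running hypothesis $(\pi^\infty)^K\neq 0$, so at level $K^\p\times K_\p$ you must (as you in effect do) appeal to the strong multiplicity one argument in its proof rather than to the statement itself; and your argument hard-wires the two resolutions whose exactness comes from the Bruhat--Tits tree, whereas the paper only needs the dimension counts \eqref{one} and \eqref{two}, for which tree-free proofs exist (\cite{Orlik}, \cite{Dat}) --- a distinction the authors care about, since they advertise a construction avoiding the tree, and their formulation also transports more readily to groups where no such explicit resolution is available.
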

\begin{proof}
We have to show that
$$\HH^{d}_{\Omega}(X^{\p}_{K^{\p}},\mathcal{E}^\infty,V_{\bm{k}}(\Omega)^\vee)^\epsilon_{\m_\pi^{S}}=0$$
for all $d\geq 0$.
Let
\begin{align}\label{projres}
0 \too P_1\too P_0\too \mathcal{E}^\infty \too 0
\end{align}
be a projective resolution of $\mathcal{E}^\infty$ as in Lemma \ref{ses}.

Again by \cite{Ge4}, Lemma 3.5 (b) we have
\begin{align*}
\HH^d_{\Omega}(X^{\p}_{K^{\p}},P_i,V_{\bm{k}}(\Omega)^\vee)^\epsilon_{\m_\pi^{S}}\xlongrightarrow{\cong} \Hom_{\Omega[G_\p]}(P_i,\tilde{\HH}^d(X_{K^\p},V_{\bm{k}}(\Omega)^\vee)^\epsilon_{\m_\pi^{S}})
\end{align*}
for all $d\geq 0$ and $i=0,1$.
As a $G_\p$-module $\tilde{\HH}^d(X_{K^\p},V_{\bm{k}}(\Omega)^\vee)^\epsilon_{\m_\pi^{S}}$ is isomorphic to some copies of $\St_\p^\infty(\Omega)$ by our assumption and Theorem \ref{dim}.

Thus, analyzing the long exact sequence induced from \eqref{projres} it is enough to show that
\[
\Ext^d_\infty(\mathcal{E}^\infty,\St_\p^\infty(\Omega))=0
\]
for all $d\geq 0$.
But this follows directly from applying $\Ext_{G_\p}(\cdot,\St_\p^\infty(\Omega) )$ to the short exact sequence \eqref{short} and the computations \eqref{one} and \eqref{two} of dimensions of smooth Ext-groups.
\end{proof}

Note that the approach we take here is the one of \cite{Ge4}, Lemma 3.7.

\section{Stabilizations}
We explain the connection between overconvergent cohomology and the cohomology of $\p$-arithmetic subgroups with values in duals of locally analytic principal series representations.
We fix a prime $\p$ of $F$, at which the quaternion algebra $D$ is split, and an embedding
\[
\iota_p\colon \overline{\Q}\intoo \overline{\Q_p},
\]
where $p$ denotes the rational prime underlying $\p$.
We define $E_p$ to be the completion of $E$ with respect to the topology induced by $\iota_p$.
\subsection{Smooth principal series}\label{ident}
We will give a representation-theoretic description of $\p$-stabilizations as in \cite{Ge}, Section 2.2.
Most of the basic results on smooth representations of $\PGL_2$ over local fields that are used in this section can be found in \cite{Bump}, Section 4.5.

As mentioned before, we may identify $G_\p$ with $\PGL_2(F_\p)$.
Let $B$ be the standard Borel subgroup of $G_\p$ of upper triangular matrices.

Given a smooth $\Omega$-representation $\tau$ of $B$ its \emph{smooth parabolic induction} is the space
$$i_B(\tau)=\{f\colon G_\p\to \tau\ \mbox{locally constant}\mid f(bg)=b.f(g)\ \forall b\in B,\ g\in G_\p \}.$$
The group $G_\p$ acts on $i_B(\tau)$ via right translation.

We identify the spaces of locally constant characters on $B$ with those on $F_\p^\times$ by mapping a locally constant character $\chi\colon F_\p^\times\too \Omega^{\times}$ to the character
\begin{align}\label{identification}
B^\times \too \Omega^{\times},\quad \begin{pmatrix}b_1 & u \\ 0 & b_2\end{pmatrix}\mapstoo \chi(b_1/b_2).
\end{align}

\begin{Def}
A $\p$-stabilization $(\chi,\vartheta)$ of $\pi_{\Omega}$ consists of a locally constant character $\chi\colon F_\p^\times\too \Omega^{\times}$ together with a non-zero $G_\p$-equivariant homomorphism
$$\vartheta\colon i_B(\chi)\too  \pi_{\p,\Omega}.$$
\end{Def}

Let $(\chi,\vartheta)$ be a $\p$-stabilization of $\pi_{\Omega}$.
Since $\pi_{\p,\Omega}$ is irreducible, $\vartheta$ is automatically surjective.
Moreover, $(\chi,\vartheta)$ induces a $\p$-stabilization of $\pi_{\Omega^\prime}$ for every extension $\Omega^\prime$ of $\Omega$.

By the classification of smooth irreducible representations of $G_\p$ we know that $\pi_\p$ is either supercuspidal or a quotient of a smooth parabolic induction as above (with $\Omega=\C$).
In the first case, $\pi_{\Omega}$ admits no stabilization.
In the later case, there always exists a finite extension $\Omega^\prime$ of $\Omega$ such that $\pi_{\Omega^\prime}$ admits a $\p$-stabilization $(\chi,\vartheta)$. Furthermore, the map $\vartheta$ is unique up to multiplication with a scalar.

The character $\chi$ is in general not unique:
Suppose $\pi_{\Omega^\prime}$ admits a $\p$-stabilization $(\chi,\vartheta)$.
If $\vartheta$ is an isomorphism, then $\chi^{2}\neq \cf$ and $\chi^{2}\neq |\cdot|_\p^2$.
Moreover, $i_B(\chi)$ is isomorphic to $i_B(\chi^{-1}|\cdot|_\p)$ but to no other principal series.
Thus, as long as $\chi^{2}\neq |\cdot|_\p$ there are two essentially different stabilizations.

If $\vartheta$ is not an isomorphism, then $\chi^{2}=\cf$ and the kernel of $\vartheta$ is one-dimensional generated by the function $g\mapsto \chi(\det(g)).$
Moreover, the sequence
$$0 \too \Ker(\vartheta)\too i_B(\chi) \too \pi_{\p,\Omega}\too 0$$
is non-split.
In particular, we have $\pi_{\p,\Omega}\cong \St_\p^\infty(\Omega)\otimes\chi$ in this case.

From the discussion above and Proposition \ref{smoothduals} we get the following:
\begin{Cor}\label{stabisom}
Suppose $\pi_{\Omega}$ admits a $\p$-stabilization $(\chi,\vartheta)$.
Then there is an isomorphism
\[
\HH^q_{\Omega}(X^{\p}_{K^{\p}},i_B(\chi),V_{\bm{k}}(\Omega)^\vee)^{\epsilon}_{\m_\pi^{S}}\cong(\pi_\Omega\pinfty)^{K_\p}
\]
of $\T_{K^\p}(\Omega)$-modules. In particular, it is an absolutely irreducible $\T_{K^\p}(\Omega)$-module.
\end{Cor}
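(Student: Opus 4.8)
The plan is to reduce Corollary \ref{stabisom} to Corollary \ref{irreducible} by identifying the relevant cohomology groups for $i_B(\chi)$ with those for $\pi_{\p,\Omega}$. The starting observation is that a $\p$-stabilization $(\chi,\vartheta)$ gives a short exact sequence of smooth $\Omega[G_\p]$-representations of finite length, namely
\[
0\too \Ker(\vartheta)\too i_B(\chi)\too \pi_{\p,\Omega}\too 0.
\]
Applying the functor $\HH^\bullet_{\Omega}(X^{\p}_{K^{\p}},-,V_{\bm{k}}(\Omega)^\vee)^{\epsilon}_{\m_\pi^{S}}$ and using the long exact sequence, I would show the connecting maps force $\HH^q_{\Omega}(X^{\p}_{K^{\p}},i_B(\chi),V_{\bm{k}}(\Omega)^\vee)^{\epsilon}_{\m_\pi^{S}}\cong \HH^q_{\Omega}(X^{\p}_{K^{\p}},\pi_{\p,\Omega},V_{\bm{k}}(\Omega)^\vee)^{\epsilon}_{\m_\pi^{S}}$, and then invoke Corollary \ref{irreducible} to identify the latter with $(\pi_\Omega\pinfty)^{K_\p}$.

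To make the long exact sequence argument work I would split into the two cases discussed just above the statement. If $\vartheta$ is an isomorphism there is nothing to prove. If $\vartheta$ is not an isomorphism, then $\Ker(\vartheta)$ is the one-dimensional representation $\Omega(\chi\circ\det)$, which since $\chi^2=\cf$ is either the trivial representation or a quadratic twist of it; in either case it is an irreducible smooth representation not isomorphic to $\pi_{\p,\Omega}\cong\St_\p^\infty(\Omega)\otimes\chi$. Hence by the vanishing part of Corollary \ref{irreducible} we have $\HH^q_{\Omega}(X^{\p}_{K^{\p}},\Ker(\vartheta),V_{\bm{k}}(\Omega)^\vee)^{\epsilon}_{\m_\pi^{S}}=0$, and by Proposition \ref{smoothduals} together with the vanishing $\HH^{d}_{\Omega}(X^{\p}_{K^{\p}},P,V_{\bm{k}}(\Omega)^\vee)_{\m_\pi^{S}}=0$ for $d<q$ and projective $P$ (used in the proof of that proposition), we also get $\HH^{q-1}_{\Omega}(X^{\p}_{K^{\p}},\pi_{\p,\Omega},V_{\bm{k}}(\Omega)^\vee)^{\epsilon}_{\m_\pi^{S}}=0$. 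Actually the cleanest route is: apply Proposition \ref{smoothduals} directly, which identifies all three terms in degree $q$ with $\Hom_{\Omega[G_\p]}(-,\tilde{\HH}^q(X_{K^\p},V_{\bm{k}}(\Omega)^\vee)_{\m_\pi^S})$, and recall from the proof of Corollary \ref{irreducible} that $\tilde{\HH}^q(X_{K^\p},V_{\bm{k}}(\Omega)^\vee)_{\m_\pi^S}$ is $\pi_{\p,\Omega}$-isotypic as a $G_\p$-module; then $\Hom_{\Omega[G_\p]}(\Ker(\vartheta),-)=0$ since $\Ker(\vartheta)$ is irreducible and not isomorphic to $\pi_{\p,\Omega}$, while applying $\Hom_{\Omega[G_\p]}(-,\pi_{\p,\Omega}^{\oplus r})$ to the short exact sequence yields $\Hom_{\Omega[G_\p]}(i_B(\chi),\pi_{\p,\Omega}^{\oplus r})\cong \Hom_{\Omega[G_\p]}(\pi_{\p,\Omega},\pi_{\p,\Omega}^{\oplus r})$ because $\Hom_{\Omega[G_\p]}(\Ker(\vartheta),\pi_{\p,\Omega})=0=\Ext^1_\infty(\Ker(\vartheta),\pi_{\p,\Omega})$ — the latter being part of the Ext computations \eqref{one}, \eqref{two} after twisting by $\chi$.

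Finally I would record the $\T^{\p}_{K^\p}(\Omega)$-equivariance: all maps in sight (the boundary maps, the isomorphisms of Proposition \ref{smoothduals}, the identification of Corollary \ref{irreducible}) are $\T^{\p}_{K^\p}(\Omega)$-equivariant, and absolute irreducibility of $(\pi_\Omega\pinfty)^{K_\p}$ as a $\T^{\p}_{K^\p}(\Omega)$-module is already part of Corollary \ref{irreducible} (and is stable under the extension of scalars from $\Omega$, being a consequence of strong multiplicity one as in Theorem \ref{dim}). I expect the only mild subtlety is bookkeeping the $\epsilon$-isotypic and $\m_\pi^S$-localized decorations through the long exact sequence, and being careful that the quadratic twist $\Omega(\chi\circ\det)$ really is a \emph{smooth} irreducible representation distinct from $\pi_{\p,\Omega}$ — which is immediate since one is finite-dimensional and the other infinite-dimensional.
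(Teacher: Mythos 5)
Your proposal is correct and is essentially the argument the paper intends: the paper's proof is exactly ``the discussion above plus Proposition \ref{smoothduals}'', i.e.\ either $\vartheta$ is an isomorphism and Corollary \ref{irreducible} applies verbatim, or one rewrites the left-hand side via Proposition \ref{smoothduals} as $\Hom_{\Omega[G_\p]}(i_B(\chi),\tilde{\HH}^q(X_{K^\p},V_{\bm{k}}(\Omega)^\vee)^\epsilon_{\m_\pi^{S}})$, notes that the target is $\pi_{\p,\Omega}$-isotypic (Theorem \ref{dim}), and kills the one-dimensional $\Ker(\vartheta)$ by a Hom-vanishing --- which is precisely your ``cleanest route''.

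Two small corrections, neither of which affects the validity of your argument. First, the parenthetical claim $\Ext^1_\infty(\Ker(\vartheta),\pi_{\p,\Omega})=0$ is false: untwisting by $\chi$ this Ext-group is $\Ext^1_\infty(\Omega,\St_\p^\infty(\Omega))$, which is one-dimensional by \eqref{one} --- it is (a twist of) the extension $\mathcal{E}^\infty$ on which the whole paper is built. Fortunately you never need it: applying $\Hom_{\Omega[G_\p]}(-,\pi_{\p,\Omega}^{\oplus r})$ to $0\to\Ker(\vartheta)\to i_B(\chi)\to\pi_{\p,\Omega}\to 0$, the isomorphism $\Hom(\pi_{\p,\Omega},\pi_{\p,\Omega}^{\oplus r})\cong\Hom(i_B(\chi),\pi_{\p,\Omega}^{\oplus r})$ requires only $\Hom(\Ker(\vartheta),\pi_{\p,\Omega}^{\oplus r})=0$; the next term in the long exact sequence is $\Ext^1_\infty(\pi_{\p,\Omega},\pi_{\p,\Omega}^{\oplus r})$, which does vanish by \eqref{two}. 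Second, in your long-exact-sequence sketch the degree-$(q-1)$ term whose vanishing is relevant is $\HH^{q-1}_{\Omega}(X^{\p}_{K^{\p}},\Ker(\vartheta),V_{\bm{k}}(\Omega)^\vee)^{\epsilon}_{\m_\pi^{S}}$, not the one with coefficients $\pi_{\p,\Omega}$; it vanishes by the same below-degree-$q$ argument extracted from the proof of Proposition \ref{smoothduals}, so this is only a bookkeeping slip.
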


\begin{Rem}
We will be mostly interested in the case that the representation $\pi_\p$ admits an invariant vector under the Iwahori group
$$I_\p=\{g\in \PGL_2(\mathcal{O}_\p) \mid g\ \mbox{is upper triangular} \bmod {\p}\}.$$
It is well known that having an Iwahori fixed vector is equivalent to having a stabilization $(\chi,\vartheta)$ with respect to an unramified character $\chi$, i.e., $\chi(\mathcal{O}_\p^\ast)=1.$
\end{Rem}

\subsection{Non-critical stabilizations}
Composing an embedding $\sigma\colon F\into E\subseteq  \overline{\Q}$ with $\iota_p$ induces a $p$-adic topology on $F$.
We define $\Sigma_\p\subseteq \Sigma$ to be the set of all embeddings inducing the topology coming from our chosen prime $\p$
 and put $\Sigma^\p=\Sigma\setminus \Sigma_\p$.
We identify $\Sigma_\p\subseteq \Sigma$ with the set of embeddings $F_\p\into E_p$.

Suppose $\Omega$ is a finite extension of $E_p$, which we will do from now on.
We may decompose
\[
V_{\bm{k}}(\Omega)=V_{\bm{k}_\p}(\Omega) \otimes_\Omega V_{\bm{k}^\p}(\Omega)
\]
with
\[
V_{\bm{k}_\p}(\Omega)=  \bigotimes_{\sigma \in \Sigma_\p} V_{k_\sigma}(\Omega)
\qquad \mbox{and}\qquad
V_{\bm{k}^\p}(\Omega)=\bigotimes_{\sigma \in \Sigma^\p} V_{k_\sigma}(\Omega).
\]
The representation $V_{\bm{k}_\p}(\Omega)$ of $G(F)$ extends to an algebraic representation of the group $G_\p$.

We will assume for the reminder of this section that $\pi_{\Omega}$ admits a $\p$-stabilization $(\chi,\vartheta)$.
We define the locally algebraic $G_\p$-representation
\[
i_B(\chi_{\bm{k}_\p})=i_B(\chi)\otimes_\Omega V_{\bm{k}_\p}(\Omega).
\]
Then, by \eqref{swap} we have a canonical $\T^{\p}_{K^\p}(\Omega)$-equivariant isomorphism
\[
\HH^d_{\Omega}(X^{\p}_{K^{\p}},i_B(\chi),V_{\bm{k}}(\Omega)^\vee)
\xlongrightarrow{\cong} \HH^d_{\Omega}(X^{\p}_{K^{\p}},i_B(\chi_{\bm{k}_\p}),V_{\bm{k}^\p}(\Omega)^\vee).
\]

Let $A$ be an affinoid $\Q_p$-algebra.
As in \ref{identification} we identify locally $\Q_p$-analytic characters from $F_\p^\times$ to $A^\times$ with those from $B$ to $A^\times$.
Given a locally analytic representation $\tau$ of $B$ its \emph{locally analytic parabolic induction} is given by
$$\II_B(\tau)=\{f\colon G_\p\to \tau\ \mbox{locally analytic}\mid f(bg)=b.f(g)\ \forall b\in B,\ g\in G_\p \}.$$
The group $G_\p$ acts on it via right translation.
Suppose that $A=\Omega$ and $\tau$ is finite-dimensional.
In that case the locally analytic parabolic induction is a strongly admissible locally analytic representation of $G_\p$.
The case of one-dimensional representations is Proposition 1.21 of \cite{EmUnitary}.
The proof works verbatim for finite-dimensional representations.

To any locally constant character $\chi\colon F_\p^\times\to \Omega^\times$ we associate the locally analytic character $\chi_{\bm{k}_\p}$ by
\[
\chi_{\bm{k}_\p}(x)=\chi(x)\prod_{\sigma\in \Sigma_\p} \sigma(x)^{-k_\sigma/2}.
\]
We may identify $i_B(\chi_{\bm{k}_\p})$ with a subspace of $\II_B(\chi_{\bm{k}_\p})$ via the map
\begin{align*}
\beta\colon i_B(\chi)\otimes \bigotimes_{\sigma\in\Sigma_\p} V_{k_{\sigma}}(\Omega)
&\too  \II_B(\chi_{\bm{k}_\p})\\
(f_{\infty}, (f_{\sigma})_{\sigma\in\Sigma_\p}) &\mapstoo f_\infty\cdot \prod_{\sigma\in\Sigma_\p} \psi_{f_\sigma}.
\end{align*}
See Section \ref{weights} for the definition of the functions $\psi_{f_\sigma}$.
\begin{Def}
A $\p$-stabilization $(\chi,\vartheta)$ of $\pi_{\Omega}$ is called non-critical if the canonical map
\[
\beta^\ast\colon\HH^d_{\Omega,\cont}(X^{\p}_{K^{\p}},\II_B(\chi_{\bm{k}_\p}),V_{\bm{k}^\p}(\Omega)^\vee)_{\m_\pi^S}
\too\HH^d_{\Omega}(X^{\p}_{K^{\p}},i_B(\chi_{\bm{k}_\p}),V_{\bm{k}^\p}(\Omega)^\vee)_{\m_\pi^S}
\]
is an isomorphism for all $d\geq 0$.
\end{Def}
Note that the notion of non-criticality depends on the level $K^\p$ away from $\p$ and the set $S$.
The notion gets stronger as smaller $K^\p$ and as bigger $S$ get.

\subsubsection{Locally algebraic and locally analytic Steinberg representation}
Assume for the moment that $\pi_\p$ is the Steinberg representation.
As mentioned above there is a unique $\p$-stabilization $\vartheta\colon i_B(\cf)\to \St^\infty_\p(\Omega)$ which has a one dimensional kernel.
We say $\pi$ is non-critical at $\p$ if this unique $\p$-stabilization is non-critical.

We define the \emph{locally algebraic Steinberg representation of weight $\bm{k}_\p$} via
$$\St_{\bm{k}_\p}^{\infty}(\Omega)=\St_{\p}^{\infty}(\Omega)\otimes_{\Omega} V_{\bm{k}_\p}(\Omega)$$
and the \emph{locally analytic Steinberg representation of weight $\bm{k}_\p$} as the quotient
$$\St_{\bm{k}_\p}^{\an}(\Omega)=\II_B(\chi_{\bm{k}_\p})/V_{\bm{k}_\p}(\Omega).$$
Thus, we have a natural embedding $$\kappa\colon\St_{\bm{k}_\p}^{\infty}(\Omega)\into \St_{\bm{k}_\p}^{\an}(\Omega).$$

Unravelling the definitions we get the following statement.
\begin{Pro}\label{clear}
Suppose that $\pi_{\p}\cong\St_\p(\C)$ and $\pi$ is non-critical.
Then the canonical map
\[
\kappa^\ast\colon\HH^d_{\Omega,\cont}(X^{\p}_{K^{\p}},\St_{\bm{k}_\p}^{\an}(\Omega),V_{\bm{k}^\p}(\Omega)^\vee)_{\m_\pi^S}
\too\HH^d_{\Omega}(X^{\p}_{K^{\p}},\St_{\bm{k}_\p}^{\infty}(\Omega),V_{\bm{k}^\p}(\Omega)^\vee)_{\m_\pi^S}
\]
is an isomorphism for all $d\geq 0$.
\end{Pro}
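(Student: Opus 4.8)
The plan is to deduce the statement from the definition of non-criticality (applied to the unique $\p$-stabilization $(\cf,\vartheta)$) by comparing the two short exact sequences that relate the Steinberg representations to the principal series. Recall that $i_B(\cf)$ sits in the non-split exact sequence $0\to \Omega\to i_B(\cf)\to \St^\infty_\p(\Omega)\to 0$, where $\Omega$ is embedded via $g\mapsto \cf(\det g)$, and that $\St^{\an}_{\bm k_\p}(\Omega)=\II_B(\chi_{\bm k_\p})/V_{\bm k_\p}(\Omega)$ by definition. Tensoring the first sequence with $V_{\bm k_\p}(\Omega)$ gives $0\to V_{\bm k_\p}(\Omega)\to i_B(\chi_{\bm k_\p})\to \St^\infty_{\bm k_\p}(\Omega)\to 0$, and under the embedding $\beta$ this is compatible with the sequence defining $\St^{\an}_{\bm k_\p}(\Omega)$; that is, $\beta$ restricts to the identity on $V_{\bm k_\p}(\Omega)$ and induces $\kappa$ on the quotients. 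So we have a commutative ladder of short exact sequences
\begin{equation*}
\begin{CD}
0 @>>> V_{\bm k_\p}(\Omega) @>>> \II_B(\chi_{\bm k_\p}) @>>> \St^{\an}_{\bm k_\p}(\Omega) @>>> 0\\
@. @AA{\id}A @AA{\beta}A @AA{\kappa}A @.\\
0 @>>> V_{\bm k_\p}(\Omega) @>>> i_B(\chi_{\bm k_\p}) @>>> \St^\infty_{\bm k_\p}(\Omega) @>>> 0.
\end{CD}
\end{equation*}

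Next I would apply the functors $\HH^\bullet_{\Omega,\cont}(X^\p_{K^\p},-,V_{\bm k^\p}(\Omega)^\vee)_{\m_\pi^S}$ and $\HH^\bullet_{\Omega}(X^\p_{K^\p},-,V_{\bm k^\p}(\Omega)^\vee)_{\m_\pi^S}$ respectively to the top and bottom rows, obtaining two long exact sequences connected by the maps induced by $\id$, $\beta$ and $\kappa$ (here one uses that $V_{\bm k_\p}(\Omega)$ is finite-dimensional, so that its continuous and algebraic $\Hom$-modules coincide and the comparison map on its cohomology is literally the identity). Non-criticality of $\pi$ at $\p$ says precisely that $\beta^\ast$ is an isomorphism in all degrees on the localized cohomology. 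By the five lemma applied degree by degree to the resulting map of long exact sequences, it follows that $\kappa^\ast$ is an isomorphism in all degrees as well. This is the entire argument; the word ``unravelling'' in the statement refers exactly to checking the compatibility of $\beta$ with the two extensions, which is a direct computation with the explicit formula for $\beta$ and the fact that $\psi_{f_\sigma}(b g)$ transforms by the algebraic character $b_1^{-k_\sigma/2}b_2^{k_\sigma/2}$ as recorded in Section \ref{weights}.

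The only genuinely delicate point is the verification that the square involving $\beta$ on the left of the ladder commutes on the nose, i.e.\ that $\beta$ carries the copy of $V_{\bm k_\p}(\Omega)$ inside $i_B(\chi_{\bm k_\p})$ (the image of $V_{\bm k_\p}(\Omega)=\Omega\otimes V_{\bm k_\p}(\Omega)$, spanned by functions of the form $(g\mapsto \cf(\det g))\cdot \prod_\sigma \psi_{f_\sigma}$) isomorphically onto the subrepresentation $V_{\bm k_\p}(\Omega)\subseteq \II_B(\chi_{\bm k_\p})$ by which one quotients to form $\St^{\an}_{\bm k_\p}(\Omega)$. This is where one must be careful about the normalization of the character $\chi_{\bm k_\p}$ and the identification \eqref{identification}, but it is a finite check with no real obstacle. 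Everything else — exactness of the localization functor, the five lemma, finiteness needed to make the long exact sequences well-behaved (Proposition \ref{smoothduals}) — is formal.
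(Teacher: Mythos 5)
Your argument is correct and is exactly the intended content of the paper's one-line proof ("unravelling the definitions"): the compatible ladder of short exact sequences relating $i_B(\chi_{\bm{k}_\p})$ and $\II_B(\chi_{\bm{k}_\p})$ to the two Steinberg representations, with identity on the $V_{\bm{k}_\p}(\Omega)$-terms and $\beta^\ast$ an isomorphism by non-criticality, gives $\kappa^\ast$ via the five lemma applied to the localized long exact sequences. Your normalization check that $\beta$ carries $\Ker(\vartheta)\otimes V_{\bm{k}_\p}(\Omega)$ onto the algebraic vectors in $\II_B(\chi_{\bm{k}_\p})$ is the right point to verify, and the rest is formal just as you say.
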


\begin{Rem}\label{CMPremark}
On page 653 of \cite{CMP} it is claimed that a property closely related to non-criticality always holds if the quaternion algebra $D$ is totally definite.
It is alluded to an Amice--V\'elu and Vishik-type argument.
But to the knowledge of the authors of this article the most general results of that type are in Section 7 of \cite{breuil_2011}, which essentially only cover the case of non-critical slope.
\end{Rem}

In the following we are going to show that if the representation $\pi_\p$ has an Iwahori-fixed vector the above definition of non-criticality is equivalent to the one given in terms of overconvergent cohomology that is used for example in \cite{BDJ} or \cite{BH}.
In particular, the classicality theorem for overconvergent cohomology will give a numerical criterion for the non-criticality of a stabilization.
In order to state this criterion later we will need the following definition.
\begin{Def}
Let $(\chi,\vartheta)$ be a $\p$-stabilization of $\pi_{\Omega}$.
The $p$-adic valuation of
$\prod_{\sigma\in\Sigma_\p}\sigma(\varpi_\p)^{\frac{k_\sigma}{2}}\chi(\omega_\p)$
is called the slope of $(\chi,\vartheta).$
We say that $(\chi,\vartheta)$ has non-critical slope if its slope is less than $\frac{1}{e_\p}\min_{\sigma\in\Sigma_\p}(k_\sigma+1)$.
\end{Def}

\subsection{Overconvergent cohomology}\label{overconvergent}
We give a quick overview over the basics of overconvergent cohomology.
\subsubsection{Locally analytic inductions} Let
\[
I_\p^n=\{g\in \PGL_2(\mathcal{O}_\p) \mid g\ \mbox{is upper triangular} \bmod {\p^n}\}.
\]
In particular, $I_\p=I_\p^{1}$ is the standard Iwahori subgroup.

Let $A$ be an affinoid $\Q_p$-algebra.
Let $\chi\colon B\cap I_\p\rightarrow A^\times $ be a locally analytic character. This means that there exists a minimal integer $n_\chi\geq 1$ such that $\chi$ restricted to $B\cap I_\p^{n_\chi}$ is analytic.
For any integer $n\geq n_\chi$ define the $A[I_\p]$-module $\mathcal{A}_\chi^n$ of functions $f\colon I_\p\longrightarrow A$ such that
\begin{itemize}
\item $f\ \mbox{is analytic on any coset of}\ I_\p/I_\p^n$,
\item $f(bk)=\chi(b)f(k)\quad \forall b\in B\cap I_\p,\ k\in I_\p$
\end{itemize}
and put
$$\mathcal{A}_\chi=\bigcup_{n\geq n_\chi}\mathcal{A}_\chi^n.$$
The $A[I_\p]$-module $\mathcal{A}_\chi$ is the locally analytic induction of $\chi$ to $I_\p$.
In the special case that $A=\Omega$ is a finite extension of $E_p$ and $\chi=\cf_{\bm{k}_\p}$ we put
$$\mathcal{A}_{\bm{k}_\p}=\mathcal{A}_{\cf_{\bm{k}_\p}}.$$

The Iwahori decomposition gives an isomorphism $I_\p\cong (I_\p\cap\overline{\mathbf{N}})\times B(\mathcal{O}_{F_\p})$, where $\overline{\mathbf{N}}$ denotes the group of unipotent lower triangular matrices.
Thus, restricting a function $f\in\mathcal{A}_\chi$ to $I_\p\cap\overline{\mathbf{N}}$ induces an isomorphism between $\mathcal{A}_\chi$ and the space $\mathcal{A}(I_\p\cap\overline{\mathbf{N}},A)$ of locally analytic $A$-valued functions on $I_\p\cap\overline{\mathbf{N}}$.
An analogous bijection holds between $\mathcal{A}_\chi^n$ and $\mathcal{A}_n(I_\p\cap\overline{\mathbf{N}},A)$, the space of $n$-locally analytic functions on $I_\p\cap\overline{\mathbf{N}}$.

\subsubsection{The $U_\p$ operator}
Consider the compact induction $\cind_{I_\p}^{G_\p}(\mathcal{A}_{\chi}^n)$.
By Frobenius reciprocity, the ring $\End_{A[G_\p]}(\cind_{I_\p}^{G_\p}(\mathcal{A}_{\chi}^n))$ can be identified with the space of all functions $\Psi\colon G_\p\too \End_A(\mathcal{A}_\chi^n)$ such that
\begin{itemize}
\item $\Psi$ is $I_\p$-biequivariant, that is $\Psi(k_1 g k_2)=k_1\Psi(g) k_2$ in $\End_A(\mathcal{A}_\chi^n)$, for all $k_1,k_2\in I_\p$, $g\in G_\p$, and
\item for any element $f\in \mathcal{A}_\chi^n$, the function $G_\p\to \mathcal{A}_\chi^n$, $g\mapsto \Psi(g)(f)$ is compactly supported.
\end{itemize}
Let
$u_\p:=\begin{pmatrix}
\varpi_\p & 0\\
0 & 1
\end{pmatrix}$.
Consider the element $\varphi_{u_\p}\in \End_A(\mathcal{A}_\chi^n)$ defined by
$$\varphi_{u_\p}(f)(\overline{n})=f(u_\p\overline{n} u_\p^{-1})\qquad \text{for all}\ f\in \mathcal{A}_\chi^n,\ \overline{n}\in I_\p\cap \overline{\mathbf{N}}.$$
By \cite[Lemma 2.2]{KoSch}, there exists a unique $I_\p$-biequivariant function $\Psi_{u_\p}\colon G_\p\to \End_A(\mathcal{A}_\chi^n)$ such that $\text{supp}(\Psi_{u_\p})=I_\p u_\p^{-1} I_\p$ and $\Psi_{u_\p}(u_\p^{-1})=\varphi_{u_\p}$.

Abusing notation, we will simply denote by $u_\p$ the $G_\p$-equivariant endomorphism of $\cind_{I_\p}^{G_\p}(\mathcal{A}_\chi^n)$ corresponding to $\Psi_{u_\p}$.

For $d\geq 0$ we set
$$
\HH^d(X_{K^{\p}\times I_{\p}},\mathcal{D}_{\chi,\bm{k}^\p}^n)=\HH^d_{A,\cont}(X^{\p}_{K^{\p}},\cind_{I_\p}^{G_\p}(\mathcal{A}_\chi^n),A\otimes_\Omega V_{\bm{k}^\p}(\Omega)^\vee).
$$
For now, we are mostly interested in the following special case:
$A=\Omega$ is a finite extension of $E_p$ and $\chi=\cf_{\bm{k}_\p}$.
In this situation we abbreviate
$$\HH^d(X_{K^{\p}\times I_{\p}},\mathcal{D}_{\bm{k}}^n)=\HH^d(X_{K^{\p}\times I_{\p}},\mathcal{D}_{\cf_{\bm{k}_\p},\bm{k}^\p}^n).$$
Later we will also need the case that $A$ is the coordinate ring of an affinoid subspace of the weight space (see Section \ref{weightspace}).

\begin{Rem}
In \cite{BDJ} overconvergent cohomology groups are introduced depending on a subset of the set of all primes of $F$ lying above $p$.
The spaces defined above correspond to the subset consisting only of the prime $\p$.
\end{Rem}

The endomorphism $u_\p$ induces an operator, that we denote by $U_\p^{\circ}$, on cohomology:
$$U_\p^{\circ}\colon H^d(X_{K^\p\times I_\p}, \mathcal{D}_{\chi,\bm{k}^\p}^n)\too H^d(X_{K^\p\times I_\p}, \mathcal{D}_{\chi,\bm{k}^\p}^n).$$

Similar as before we may identify $V_{\bm{k}_\p}(\Omega)$ with the space of (globally) algebraic vectors in $\mathcal{A}_{\bm{k}_\p}^n$.
It induces an embedding
\[
\cind_{I_\p}^{G_\p}(V_{\bm{k}_\p}(\Omega))\too\cind_{I_\p}^{G_\p}(\mathcal{A}_{\bm{k}_\p}^n)
\]
and the subspace $\cind_{I_\p}^{G_\p}(V_{\bm{k}_\p}(\Omega))$ is clearly invariant under the action of $u_\p$.

Thus by invoking \eqref{relation} we get a $\mathbb{T}^\p_{K^\p}(\Omega)$-equivariant map
\begin{equation}\label{OC}
\HH^d(X_{K^\p\times I_\p},\mathcal{D}_{\bm{k}}^n)\longrightarrow \HH^d(X_{K^\p\times I_\p},V_{\bm{k}}^\vee)
\end{equation}
in cohomology.
We denote the natural operator on the right hand side induced by $u_\p$ by $U_\p$.
The map $\eqref{OC}$ intertwines the action of the Hecke operator $U_\p^{\circ}$ on $H^d(X_{K^\p\times I_\p},\mathcal{D}_{\bm{k}}^n)$ with the action of $\prod_{\sigma\in\Sigma_\p}\sigma(\varpi_\p)^{\frac{k_\sigma}{2}} U_\p$ on $H_c^d(X_{K^\p\times I_\p},V_{\bm{k}}^\vee)$.
This follows from a simple analysis of the change of action of $u_\p$ under the isomorphism
$$\cind_{I_\p}^{G_\p} V_{\bm{k}_\p}\cong  V_{\bm{k}_\p}\otimes_{\Omega}\cind_{I_\p}^{G_\p}\Omega $$
given by \eqref{cindisom}.
We define the Hecke operator $U_\p^{\circ}$ on the right hand side of \eqref{OC} by
$$U_\p^{\circ}=\prod_{\sigma\in\Sigma_\p}\sigma(\varpi_\p)^{\frac{k_\sigma}{2}} U_\p$$
and similarly we define an action of $U_p$ on the left hand side of $\eqref{OC}$.

\subsubsection{Slope decompositions and classicality} 
We give a reminder on slope decompositions.
As before, $A$ denotes a affinoid $\Q_p$-algebra.
Let $M$ be an $A$-module equipped with an $A$-linear endomorphism $u\colon M \to M$.
Fix a rational number $h\geq 0$.
A polynomial $Q\in A[x]$ is multiplicative of slope $\leq h$ if
\begin{itemize}
\item the leading coefficient of $Q$ is a unit in $A$ and 
\item every edge of the Newton polygon of $Q$ has slope $\leq h$.
\end{itemize}
We put $Q^\ast(x)= x^{\deg Q} Q(1/x).$
An element $m\in M$ is said to be of slope $\leq h$ if there is a multiplicative polynomial $Q\in A[x]$ of slope $\leq h$
such that $Q^{\ast}(u) m=0$.
Let $M^{\leq h} \subseteq M$ be the submodule of elements of M of slope $\leq h$.
\begin{Def}
A slope $\leq h$ decomposition of $M$ is an $A[u]$-module isomorphism
$$M \cong M^{\leq h} \oplus M^{> h}$$
such that 
\begin{itemize}
\item $M^{\leq h}$ is a finitely generated $A$-module and
\item $Q^\ast(u)$ acts invertibly on $M^{>h}$ for every multiplicative polynomial $Q\in A[x]$ of slope $\leq h$.
\end{itemize}
\end{Def}
Note that if $A=\Omega$ is a finite extension of $E_p$ and $M$ is finite-dimensional, then a slope $\leq h$ decomposition always exists.
If $M$ admits a slope $\leq h$ decomposition for all $h\geq 0$, we put
$$M^{< \infty}=\bigcup_{h\geq 0} M^{\leq h}.$$

The most remarkable result about slope decomposition is the following theorem which was first proved by Ash and Stevens \cite{AshStevens} in special cases over $\Q$ and then generalized by Urban \cite{Urban} and Hansen \cite{Hansen} to more general settings.
See also \cite{BWi0} for a detailed treatment if the case of $\PGL_2$ over arbitrary number fields.
In these results one always considers all primes above $\p$ simultaneously.
The modifications necessary to allow subsets of all primes above $\p$ are explained in the proof of \cite{BDJ}, Theorem 2.7.

\begin{Thm}\label{AS}
For every $d\geq 0$ and every $h\geq 0$ the cohomology groups
$$H^d(X_{K^\p\times I_\p},\mathcal{D}^n_{\bm{k}})$$
admit a slope $\leq h$ decomposition with respect to the Hecke operator $U_\p^{\circ}$.

If $h<\frac{1}{e_\p}\min_{\sigma\in\Sigma_\p}(k_\sigma+1)$, where $e_\p$ is the ramification index of $\p$, then for all $d\geq 0$ the map \eqref{OC} induces the following $\mathbb{T}^\p_{K^\p}$-equivariant isomorphism
$$H^d(X_{K^\p\times I_\p},\mathcal{D}^n_{\bm{k}})^{\epsilon,\leq h}\overset{\sim}{\longrightarrow }H^d(X_{K^\p\times I_\p},V_{\bm{k}}^\vee)^{\epsilon,\leq h}.$$
Here the slope decomposition is taken with respect to $U_\p^{\circ}$ on both sides.
\end{Thm}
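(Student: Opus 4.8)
The plan is to prove Theorem \ref{AS} in two parts, following the now-standard framework of overconvergent cohomology as developed in \cite{AshStevens}, \cite{Urban}, \cite{Hansen} and \cite{BWi0}, but carefully tracking the modification to a single prime $\p$ above $p$ as in \cite{BDJ}, Theorem 2.7. For the first assertion (existence of slope $\leq h$ decompositions), I would first observe that the cohomology group $\HH^d(X_{K^\p\times I_\p},\mathcal{D}_{\bm{k}}^n)$ can be computed by a complex of Banach (or more precisely orthonormalizable) $\Omega$-Fréchet spaces: one uses the Borel--Serre compactification to reduce to a finite complex, and the coefficient module $\mathcal{A}_{\bm{k}_\p}^n$ (resp.\ its continuous dual appearing in $\mathcal{D}_{\bm{k}}^n$) is an orthonormalizable $\Omega$-Banach space on which the operator $\varphi_{u_\p}$, and hence $U_\p^\circ$, acts compactly because it factors through the restriction from $I_\p\cap\overline{\mathbf N}$ to a strictly smaller polydisc (the contraction $\overline n\mapsto u_\p\overline n u_\p^{-1}$ is norm-decreasing). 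Given a compact operator on a complex of orthonormalizable Banach spaces over an affinoid algebra, the Riesz theory of \cite{AshStevens} (Proposition 4.1.2) or \cite{Urban} produces slope $\leq h$ decompositions on cohomology for every $h$; I would cite this rather than reprove it. The only point needing a remark is that passing from ``all primes above $p$'' to ``just $\p$'' only changes which polydisc one works on and leaves the compactness argument intact, exactly as in \cite{BDJ}.

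For the second assertion (the classicality isomorphism in small slope), the strategy is to compare $\mathcal{A}_{\bm{k}_\p}^n$ with its subspace $V_{\bm{k}_\p}(\Omega)$ of globally algebraic vectors. The key algebraic input is the existence of a $U_\p^\circ$-stable complement: there is a short exact sequence of $I_\p$-modules
\[
0\too V_{\bm{k}_\p}(\Omega)\too \mathcal{A}_{\bm{k}_\p}^n\too \mathcal{A}_{\bm{k}_\p}^n/V_{\bm{k}_\p}(\Omega)\too 0,
\]
inducing, after applying $\cind_{I_\p}^{G_\p}(-)$ and then $\HH^\bullet_{\Omega,\cont}(X^\p_{K^\p},-,V_{\bm{k}^\p}(\Omega)^\vee)$, a long exact sequence of $\T^\p_{K^\p}(\Omega)$-modules which is $U_\p^\circ$-equivariant. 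Taking slope $\leq h$ parts (which is exact, since slope decompositions are functorial for $U_\p^\circ$-equivariant maps by \cite{AshStevens}, Proposition 4.1.2) reduces the claim to showing that the cohomology of the quotient module $\mathcal{A}_{\bm{k}_\p}^n/V_{\bm{k}_\p}(\Omega)$ has slope $> h$ whenever $h<\frac1{e_\p}\min_{\sigma\in\Sigma_\p}(k_\sigma+1)$. This is the classical ``control theorem'' estimate: on the quotient module the operator $\varphi_{u_\p}$ is divisible by $\prod_{\sigma\in\Sigma_\p}\sigma(\varpi_\p)^{k_\sigma+1}$ (a monomial of valuation $\geq \frac1{e_\p}\min_\sigma(k_\sigma+1)$ after the normalization by $\prod_\sigma\sigma(\varpi_\p)^{k_\sigma/2}$ built into $U_\p^\circ$), so $U_\p^\circ$ acts topologically nilpotently up to a factor of valuation exceeding $h$, forcing the relevant slope $\leq h$ subquotient to vanish. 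I would carry this out by filtering $\mathcal{A}_{\bm{k}_\p}^n$ by order of vanishing along the algebraic subspace, exactly the argument in \cite{Urban}, \cite{Hansen} or \cite{BWi0}, Section 3, and simply noting the single-prime bookkeeping.

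The main obstacle, and the place where genuine care is needed, is the slope estimate on the quotient $\mathcal{A}_{\bm{k}_\p}^n/V_{\bm{k}_\p}(\Omega)$ together with getting the normalization constants right: the operator $U_\p^\circ$ on overconvergent cohomology differs from $U_\p$ on classical cohomology by the explicit factor $\prod_{\sigma\in\Sigma_\p}\sigma(\varpi_\p)^{k_\sigma/2}$ (as spelled out in the discussion preceding the theorem), and one must check that the bound $\frac1{e_\p}\min_{\sigma\in\Sigma_\p}(k_\sigma+1)$ is the one compatible with this normalization and with the decomposition $V_{\bm{k}}(\Omega)=V_{\bm{k}_\p}(\Omega)\otimes_\Omega V_{\bm{k}^\p}(\Omega)$. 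A secondary but real subtlety is that $\mathcal{D}_{\bm{k}}^n$ is built from the \emph{continuous} dual and the sign-character $\epsilon$-decomposition must be seen to commute with everything; both are handled by noting that the $\pi_0(G_\infty)$-action and the $U_\p^\circ$-action commute (they act on different variables) so all constructions descend to $\epsilon$-isotypic components. Everything else — the finiteness of $\HH^d(X_{K^\p\times I_\p},\mathcal{D}_{\bm{k}}^n)^{\leq h}$, the $\T^\p_{K^\p}$-equivariance, the Borel--Serre reduction to a finite complex — is routine and I would invoke the cited literature.
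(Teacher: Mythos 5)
Your proposal is correct and takes essentially the same route as the paper: the paper does not reprove Theorem \ref{AS} but attributes it to Ash--Stevens \cite{AshStevens}, Urban \cite{Urban} and Hansen \cite{Hansen}, with the $\PGL_2$-over-number-fields case in \cite{BWi0} and the single-prime modification explained in the proof of \cite{BDJ}, Theorem 2.7 --- exactly the references your sketch invokes for both the compactness/Riesz step and the small-slope control theorem. The only blemish is the parenthetical claim that $\varphi_{u_\p}$ is \emph{divisible} by $\prod_{\sigma\in\Sigma_\p}\sigma(\varpi_\p)^{k_\sigma+1}$ on the quotient, which overstates the estimate; what holds (and what you actually use to conclude) is that every slope occurring on that quotient is at least $\frac{1}{e_\p}\min_{\sigma\in\Sigma_\p}(k_\sigma+1)$ after the stated normalization of $U_\p^{\circ}$.
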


\subsection{Overconvergent cohomology and non-critical stabilization}\label{overconvergentsec}

Let $A$ be an affinoid algebra and $\chi\colon F_\p^\times\too A^\times$ be a locally analytic character. Denote by $\chi_0$ its restriction to $\mathcal{O}_{F_\p}^\times$.
An element $f\in \mathcal{A}_{\chi_0}^{n}$ can uniquely extended to a function on $B I_\p \subset G_\p$ by putting
$f(bk)=\chi(b)f(k).$
Since $B I_\p \subset G_\p$ is open, extension by zero yields an $I_\p$-equivariant $A$-linear injection
$$\mathcal{A}_{\chi_0}^{n}\intoo\mathbb{I}_B(\chi)\vert_{I_\p}.$$

By Frobenius reciprocity, it induces a unique $G_\p$-equivariant $A$-linear morphism
\begin{equation}\label{aug}
\mathrm{aug}_\chi\colon \cind_{I_\p}^{G_\p}(\mathcal{A}_{\chi_0}^n)\longrightarrow \mathbb{I}_B(\chi).
\end{equation}

The following theorem is due to Kohlhaase and Schraen.
\begin{Thm}\label{Koszul}
For $n\geq n_{\chi_0}$, the short sequence
\begin{equation}\label{exact}
0\longrightarrow \cind_{I_\p}^{G_\p}(\mathcal{A}_{\chi_0}^n)\xlongrightarrow{u_\p-\chi(\varpi_\p)} \cind_{I_\p}^{G_\p}(\mathcal{A}_{\chi_0}^n)\xlongrightarrow{\mathrm{aug}_\chi} \II_B(\chi)\longrightarrow 0
\end{equation}
is exact, where $\mathrm{aug}_\chi$ is the map obtained by composing \eqref{aug} with the natural map $\cind_{I_\p}^{G_\p}(\mathcal{A}_{\chi_0}^n)\rightarrow \cind_{I_\p}^{G_\p}(\mathcal{A}_{\chi_0})$.
\end{Thm}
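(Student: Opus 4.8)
The plan is to verify in turn the three properties that make \eqref{exact} a short exact sequence: that it is a complex, that $\mathrm{aug}_\chi$ is surjective, and that it is exact in the middle. The mechanism underlying everything is that $\varphi_{u_\p}$, hence the operator $u_\p$ on $\cind_{I_\p}^{G_\p}(\mathcal{A}_{\chi_0}^n)$, contracts in the $\overline{\mathbf{N}}$-direction, together with the fact that the matrix $u_\p$ lies in the Borel $B$. Unravelling Frobenius reciprocity and the left $B$-equivariance built into $\II_B(\chi)$, a direct computation shows that $\mathrm{aug}_\chi$ intertwines $u_\p$ on the source with multiplication by the scalar $\chi(\varpi_\p)$ on $\II_B(\chi)$; in particular $\mathrm{aug}_\chi\circ(u_\p-\chi(\varpi_\p))=0$, so \eqref{exact} is at least a complex.

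For surjectivity of $\mathrm{aug}_\chi$ I would identify $\II_B(\chi)$ with the locally analytic sections of the line bundle $\mathcal{L}_\chi$ attached to $\chi$ over $B\backslash G_\p\cong\mathbb{P}^1(F_\p)$; under this identification $\iota$ (extension by zero from the clopen cell $BI_\p$, cf.~\eqref{aug}) becomes the inclusion of the sections that are $n$-analytic and supported in the standard cell. Given $f\in\II_B(\chi)$, choose a finite clopen partition $\mathbb{P}^1(F_\p)=\bigsqcup_i U_i$ fine enough that $f$ is analytic on each $U_i$, together with elements $g_i\in G_\p$ such that $g_i^{-1}U_i$ lies inside the standard cell and the transported section $g_i^{-1}\cdot(f\vert_{U_i})$ lands in $\mathcal{A}_{\chi_0}^n$ for the fixed $n\geq n_{\chi_0}$; the necessary rescalings of the analyticity radius are supplied by suitable powers of $u_\p$, which is precisely why keeping $n$ fixed is harmless. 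Then $f=\sum_i g_i\cdot\iota\bigl(g_i^{-1}\cdot(f\vert_{U_i})\bigr)$ exhibits $f$ in the image of $\mathrm{aug}_\chi$.

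For injectivity of $u_\p-\chi(\varpi_\p)$ I would use the finiteness of supports. An element $F\in\cind_{I_\p}^{G_\p}(\mathcal{A}_{\chi_0}^n)$ is supported on finitely many cosets in $I_\p\backslash G_\p$, and since $\Psi_{u_\p}$ is supported on the single double coset $I_\p u_\p^{-1}I_\p$, the support of $u_\p^{k}F$ is contained in $(I_\p u_\p^{-k}I_\p)\cdot\operatorname{supp}(F)$, which drifts off to infinity (along the translation axis of $u_\p$) as $k$ grows and is eventually disjoint from $\operatorname{supp}(F)$. Hence $u_\p F=\chi(\varpi_\p)F$, with $\chi(\varpi_\p)$ a unit, forces $F=0$.

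The remaining inclusion $\Ker(\mathrm{aug}_\chi)\subseteq\im(u_\p-\chi(\varpi_\p))$ is the heart of the matter, and I expect it to be the main obstacle. I would argue by descending induction on a depth invariant of $\operatorname{supp}(F)$: for $F$ with $\mathrm{aug}_\chi(F)=0$, the vanishing of $\mathrm{aug}_\chi$ together with the intertwining relation pins the highest-depth component of $F$ down to $(u_\p-\chi(\varpi_\p))$ applied to a class of smaller depth, which one subtracts off before iterating, the base case — $F$ supported on $I_\p$ only — being immediate from injectivity of $\iota$. Making this inductive step precise is a Euclidean-division statement for $u_\p-\chi(\varpi_\p)$ on the compact induction, and requires controlling stratum by stratum how $\mathrm{aug}_\chi$ interacts with the support filtration; this bookkeeping is exactly what is carried out by Kohlhaase and Schraen, whose result \cite{KoSch} we invoke.
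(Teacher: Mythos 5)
Your proposal is correct and, at the decisive point (exactness in the middle), rests on exactly the same result the paper itself invokes: the paper's entire proof is the citation of \cite[Proposition 2.4 and Theorem 2.5]{KoSch}. Your additional sketches — the intertwining $\mathrm{aug}_\chi\circ u_\p=\chi(\varpi_\p)\,\mathrm{aug}_\chi$, surjectivity via translating a clopen partition of $\mathbb{P}^1(F_\p)$ into the Iwahori cell, and injectivity via the support of $u_\p^kF$ drifting off along $I_\p u_\p^{-k}I_\p$ — are sound, but they do not replace the Kohlhaase--Schraen input, so the approach is essentially that of the paper.
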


\begin{proof}
See \cite[Proposition 2.4 and Theorem 2.5]{KoSch}.
\end{proof}

Let $\chi\colon F_\p^\times \to\Omega^\times$ be a smooth character.
Similarly as above, the sequence
\begin{align*}
0\longrightarrow \cind_{I_\p}^{G_\p}(\Omega)\xlongrightarrow{u_\p-\chi(\varpi_\p)} \cind_{I_\p}^{G_\p}(\Omega)\xlongrightarrow{\mathrm{aug}_\chi} i_B(\chi)\longrightarrow 0
\end{align*}
is exact.
This can be deduced from Borel's theorem that $\cind_{I_\p}^{G_\p}(\Omega)$ is a flat module over the Iwahori-Hecke algebra (see the end of Section 3.1 of \cite{GR}).
Tensoring the above exact sequence with $V_{\bm{k}_\p}$ and using \eqref{cindisom} we get a short exact sequence
\begin{align}\label{smoothup}
0\longrightarrow \cind_{I_\p}^{G_\p}(V_{\bm{k}_\p}) \xlongrightarrow{u_\p-\chi_{\bm{k}_\p}(\varpi_\p)} \cind_{I_\p}^{G_\p}(V_{\bm{k}_\p})\xlongrightarrow{\mathrm{aug}_{\chi_{\bm{k}_\p}}} i_B(\chi_{\bm{k}_\p})\longrightarrow 0.
\end{align}

Given a $\p$-stabilization $(\chi,\vartheta)$ of $\pi_\Omega$ we define
$$\m_{\pi,(\chi,\vartheta)}^S\subseteq \T_K^S(\Omega)[U_\p]$$
to be the maximal ideal generated by $\m_{\pi}^S$ and $U_\p-\chi(\varpi_\p).$
In accordance with \cite{BDJ}, Definition 2.12, and \cite{BH}, Definition 1.5.1, we make the following definition:
\begin{Def}
The maximal ideal $\m_{\pi,(\chi,\vartheta)}^S\subseteq \T_K^S(\Omega)[U_\p]$ is non-critical if the map
\[
\HH^d(X_{K^\p\times I_\p},\mathcal{D}_{\bm{k}}^n)^{< \infty}_{\m_{\pi,(\chi,\vartheta)}^S}\longrightarrow \HH^d(X_{K^\p\times I_\p},V_{\bm{k}}^\vee)_{\m_{\pi,(\chi,\vartheta)}^S}
\]
induced by \eqref{OC} is isomorphisms for all $d\geq 0$.
\end{Def}

\begin{Pro}\label{equivalence}
Suppose that $\pi_\p$ has an Iwahori fixed vector and let $(\chi,\vartheta)$ be a $\p$-stabilization of $\pi_\Omega$.
Then the following are equivalent:
\begin{enumerate}[(i)]
\item $(\chi,\vartheta)$ is non-critical
\item $\m_{\pi,(\chi,\vartheta)}^S$ is non-critical.
\end{enumerate}
\end{Pro}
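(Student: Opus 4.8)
The plan is to deduce the equivalence from the two short exact sequences relating compact inductions and (locally analytic, resp. smooth) principal series, together with the slope-decomposition theory of Section~\ref{overconvergentsec}. The key observation is that both notions of non-criticality are statements about a map between cohomology groups being an isomorphism in all degrees, and that both sides fit into long exact sequences induced by the same Koszul-type resolutions. Since $\pi_\p$ has an Iwahori-fixed vector, the stabilization $(\chi,\vartheta)$ corresponds to an \emph{unramified} character $\chi$, so $n_{\chi_0}=1$ and we may work with $\mathcal{A}_{\bm{k}_\p}^n$ for any $n\geq 1$; moreover $\mathcal{A}_{\chi_0}^n = \mathcal{A}_{\bm{k}_\p}^n$ after the twist, and the relevant operator $u_\p$ matches the $U_\p^\circ$ used in Theorem~\ref{AS} up to the normalization $\prod_{\sigma\in\Sigma_\p}\sigma(\varpi_\p)^{k_\sigma/2}$ already recorded after \eqref{OC}.

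First I would apply $\HH^\bullet_{\Omega,\cont}(X^\p_{K^\p},-,V_{\bm{k}^\p}(\Omega)^\vee)$ to the Kohlhaase--Schraen sequence \eqref{exact} (with $\chi$ replaced by $\chi_{\bm{k}_\p}$, which is legitimate since the character is locally analytic and $n\geq n_{\chi_0}=1$), obtaining a long exact sequence whose connecting maps identify
\[
\HH^d_{\Omega,\cont}(X^\p_{K^\p},\II_B(\chi_{\bm{k}_\p}),V_{\bm{k}^\p}(\Omega)^\vee)
\]
with the cokernel/kernel of $u_\p-\chi_{\bm{k}_\p}(\varpi_\p)$ acting on $\HH^d(X_{K^\p\times I_\p},\mathcal{D}^n_{\bm{k}})$. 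Localizing at $\m^S_{\pi,(\chi,\vartheta)}$ forces $U_\p=\chi(\varpi_\p)$, which after the normalization is exactly $u_\p=\chi_{\bm{k}_\p}(\varpi_\p)$ on the relevant generalized eigenspace; on the finite-slope part this is a nilpotent operator, so by a standard argument the localized long exact sequence collapses and $\beta^\ast$ on the analytic side is identified with the map \eqref{OC} localized at $\m^S_{\pi,(\chi,\vartheta)}$, \emph{provided} one also does the analogous computation with the smooth sequence \eqref{smoothup} to identify the target $\HH^d_\Omega(X^\p_{K^\p},i_B(\chi_{\bm{k}_\p}),V_{\bm{k}^\p}(\Omega)^\vee)_{\m^S_\pi}$ with the corresponding cokernel on $\HH^d(X_{K^\p\times I_\p},V_{\bm{k}}^\vee)_{\m^S_\pi}$. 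The upshot: both $\beta^\ast$ and the overconvergent-classicality map \eqref{OC} are, after the appropriate localizations, the maps induced on $u_\p$-(co)invariants by the single map \eqref{OC} of complexes; hence one is an isomorphism in all degrees if and only if the other is.

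Concretely, I would argue as follows. Condition (ii) says \eqref{OC} becomes an isomorphism after localizing at $\m^S_{\pi,(\chi,\vartheta)}$ and passing to the finite-slope part; equivalently, its cone is finite-slope-acyclic at that maximal ideal. Applying the long exact sequences above, the cone of $\beta^\ast$ is built from the cone of \eqref{OC} by the exact triangle coming from $u_\p-\chi_{\bm{k}_\p}(\varpi_\p)$, so $\beta^\ast$ is an isomorphism in all degrees at $\m^S_\pi$ exactly when \eqref{OC} is an isomorphism on $u_\p=\chi_{\bm{k}_\p}(\varpi_\p)$-generalized eigenspaces, which (since everything is finite-dimensional by Proposition~\ref{smoothduals} and Theorem~\ref{AS}, and the non-relevant eigenspaces do not meet $\m^S_\pi$ by Corollary~\ref{irreducible}) is precisely condition (ii). One must also check that localization at $\m^S_\pi$ on the principal-series side corresponds to localization at $\m^S_{\pi,(\chi,\vartheta)}$ on the overconvergent side: this is where the $U_\p$-eigenvalue $\chi(\varpi_\p)$ enters, and it follows from the compatibility of Hecke actions under \eqref{relation} together with the normalization statement after \eqref{OC}.

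\textbf{Main obstacle.} The technical heart is bookkeeping the normalizations and the localizations simultaneously: one must verify that the operator $u_\p$ appearing in the Kohlhaase--Schraen sequence \eqref{exact}, once twisted by $V_{\bm{k}_\p}$ via \eqref{cindisom}, acts on cohomology as $U_\p^\circ$ in the sense used in Theorem~\ref{AS} and in the definition of $\m^S_{\pi,(\chi,\vartheta)}$, including the factor $\prod_{\sigma\in\Sigma_\p}\sigma(\varpi_\p)^{k_\sigma/2}$, and that passing to $(-)^{<\infty}$ is the same as inverting everything outside the generalized $u_\p=\chi_{\bm{k}_\p}(\varpi_\p)$-eigenspace after localizing at $\m^S_\pi$. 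Once this dictionary is in place, the equivalence is a formal consequence of the two long exact sequences and the five lemma applied degree by degree; no further input about the Bruhat--Tits tree or about Galois representations is needed.
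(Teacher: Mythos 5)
Your proposal follows essentially the same route as the paper's proof: apply cohomology to the Kohlhaase--Schraen sequence \eqref{exact} and its smooth analogue \eqref{smoothup} to get a commutative ladder of long exact sequences connecting $\beta^\ast$ with the map \eqref{OC}, use Theorem \ref{AS} (slope decompositions, plus the $u_\p$ versus $U_\p^{\circ}$ normalization recorded after \eqref{OC}) to pass between localization at $\m_\pi^S$ and at $\m_{\pi,(\chi,\vartheta)}^S$, and conclude degree by degree. Your cone/exact-triangle phrasing is just a repackaging of the paper's induction on $d$, and it correctly fixes the slightly loose earlier claim that the localized sequence ``collapses''; the argument is sound.
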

\begin{proof}
The long exact sequences induced by \eqref{exact} and \eqref{smoothup} yield the following diagram with exact columns:
\begin{center}
 \begin{tikzpicture}
    \path
		(0,-1) node[name=B]{$\HH^d_{\Omega,\cont}(X_{K^\p}^\p,\II_B(\chi_{\bm{k}_\p}),V_{\bm{k}^\p}(\Omega)^\vee)_{\m_{\pi}^S}$}
		(0,-2.6) node[name=C]{$\HH^d(X_{K^\p\times I_\p},\mathcal{D}_{\bm{k}}^n)_{\m_{\pi}^S}$}
		(0,-4.2) node[name=D]{$\HH^d(X_{K^\p\times I_\p},\mathcal{D}_{\bm{k}}^n)_{\m_{\pi}^S}$}
		(0,-5.8) node[name=E]{$\HH^{d+1}_{\Omega,\cont}(X_{K^\p}^\p,\II_B(\chi_{\bm{k}_\p}),V_{\bm{k}^\p}(\Omega)^\vee)_{\m_{\pi}^S}$}
		(6.5,-1) node[name=G]{$\HH^d_{\Omega}(X_{K^\p}^\p,i_B(\chi_{\bm{k}_\p}),V_{\bm{k}^\p}(\Omega)^\vee)_{\m_{\pi}^S}$}
		(6.5,-2.6) node[name=H]{$\HH^d(X_{K^\p\times I_\p},V_{\bm{k}}(\Omega)^\vee)_{\m_{\pi}^S}$}
		(6.5,-4.2) node[name=I]{$\HH^d(X_{K^\p\times I_\p},V_{\bm{k}}(\Omega)^\vee)_{\m_{\pi}^S}$}
		(6.5,-5.8) node[name=J]{$\HH^{d+1}_{\Omega}(X_{K^\p}^\p,i_B(\chi_{\bm{k}_\p}),V_{\bm{k}^\p}(\Omega)^\vee)_{\m_{\pi}^S}$};
		\draw[->] (B) -- (C) node[midway, left]{$\mathrm{aug}_{\chi_{\bm{k}_\p}}^\ast$};
		\draw[->] (C) -- (D) node[midway, left]{$U_\p^{\circ}-\chi_{\bm{k}_\p}(\varpi_\p)$};
		\draw[->] (D) -- (E) node[midway, left]{$\partial$};
		\draw[->] (G) -- (H) node[midway, left]{$\mathrm{aug}_{\chi_{\bm{k}_\p}}^\ast$};
		\draw[->] (H) -- (I) node[midway, right]{$U_\p^{\circ}-\chi_{\bm{k}_\p}(\varpi_\p)$};
		\draw[->] (I) -- (J) node[midway, left]{$\partial$};
    \draw[->] (B) -- (G) node[midway, above]{$\beta^\ast$};
    \draw[->] (C) -- (H) node[midway, above]{$\eqref{OC}$};
    \draw[->] (D) -- (I) node[midway, above]{$\eqref{OC}$};
		\draw[->] (E) -- (J) node[midway, above]{$\beta^\ast$};
  \end{tikzpicture} 
\end{center}
From the existence of slope decompositions (see Theorem \ref{AS}) we may replace $\HH^d(X_{K^\p\times I_\p},\mathcal{D}_{\bm{k}}^n)_{\m_{\pi}^S}$ by $\HH^d(X_{K^\p\times I_\p},\mathcal{D}_{\bm{k}}^n)_{\m_{\pi,(\chi,\vartheta)}^S}$ in diagram above (and similarly for cohomology with coefficients in $V_{\bm{k}}$).
The claim then follows by induction on $d$.
\end{proof}

Applying the second part of Theorem \ref{AS} we get the following: 
\begin{Cor}\
Suppose that $\pi_\p$ has an Iwahori-fixed vector.
If a $\p$-stabilization $(\chi,\vartheta)$ of $\pi_\Omega$ has non-critical slope, then it is non-critical.
\end{Cor}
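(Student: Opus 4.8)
The plan is to combine the equivalence established in Proposition \ref{equivalence} with the classicality part of Theorem \ref{AS}. Since $\pi_\p$ has an Iwahori-fixed vector, Proposition \ref{equivalence} tells us it suffices to show that the stabilization $(\chi,\vartheta)$, being of non-critical slope, gives rise to a non-critical maximal ideal $\m_{\pi,(\chi,\vartheta)}^S\subseteq \T_K^S(\Omega)[U_\p]$ in the sense of the definition preceding Proposition \ref{equivalence}; that is, I must check that the map
\[
\HH^d(X_{K^\p\times I_\p},\mathcal{D}_{\bm{k}}^n)^{< \infty}_{\m_{\pi,(\chi,\vartheta)}^S}\longrightarrow \HH^d(X_{K^\p\times I_\p},V_{\bm{k}}^\vee)_{\m_{\pi,(\chi,\vartheta)}^S}
\]
induced by \eqref{OC} is an isomorphism for all $d\geq 0$.

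The key observation is that localizing at $\m_{\pi,(\chi,\vartheta)}^S$ forces the relevant $U_\p^{\circ}$-eigenvalue to be (a unit multiple of) $\prod_{\sigma\in\Sigma_\p}\sigma(\varpi_\p)^{k_\sigma/2}\chi(\varpi_\p)$, whose $p$-adic valuation is precisely the slope of $(\chi,\vartheta)$. Fix a rational number $h$ with
\[
\mathrm{slope}(\chi,\vartheta)\;\leq\; h\;<\;\tfrac{1}{e_\p}\min_{\sigma\in\Sigma_\p}(k_\sigma+1),
\]
which is possible exactly because the stabilization has non-critical slope. Then localization at $\m_{\pi,(\chi,\vartheta)}^S$ kills everything of slope $>h$ on both sides: concretely, the finite-slope part $\HH^d(X_{K^\p\times I_\p},\mathcal{D}_{\bm{k}}^n)^{<\infty}$ localized at $\m_{\pi,(\chi,\vartheta)}^S$ equals the slope $\leq h$ part localized at the same ideal (and likewise for $V_{\bm{k}}^\vee$), since any slope $>h$ generalized eigenspace lies outside this maximal ideal once $h$ is at least the slope attached to $\chi(\varpi_\p)$. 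Applying the classicality statement of Theorem \ref{AS} for this $h$ and each sign character $\epsilon$ then yields the desired isomorphism after localizing; summing over $\epsilon$ recovers the full statement. Hence $\m_{\pi,(\chi,\vartheta)}^S$ is non-critical, and Proposition \ref{equivalence} finishes the argument.

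The main point requiring care is the bookkeeping between the Hecke operator $U_\p^{\circ}$ used in the slope decomposition of Theorem \ref{AS} and the normalization appearing in the definition of $\m_{\pi,(\chi,\vartheta)}^S$: the latter is phrased via $U_\p-\chi(\varpi_\p)$ acting on $\HH^d(X_{K^\p\times I_\p},V_{\bm{k}}^\vee)$, while $U_\p^{\circ}=\prod_{\sigma\in\Sigma_\p}\sigma(\varpi_\p)^{k_\sigma/2}U_\p$, and the map \eqref{OC} intertwines the smooth $U_\p^{\circ}$ on overconvergent cohomology with this twisted operator. One must verify that, under $\iota_p$ and the chosen embeddings $\Sigma_\p$, the $p$-adic valuation of the $U_\p^{\circ}$-eigenvalue cut out by $\m_{\pi,(\chi,\vartheta)}^S$ is indeed $\mathrm{slope}(\chi,\vartheta)=v_p\!\left(\prod_{\sigma\in\Sigma_\p}\sigma(\varpi_\p)^{k_\sigma/2}\chi(\varpi_\p)\right)$, so that the chosen $h$ genuinely separates the $\pi$-part from the complement in the slope decomposition. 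Once this compatibility is pinned down, the rest is a formal consequence of Theorem \ref{AS} and Proposition \ref{equivalence}.
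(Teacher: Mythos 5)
Your argument is correct and is essentially the paper's own proof: the corollary is deduced there exactly by combining Proposition \ref{equivalence} with the classicality part of Theorem \ref{AS}, with the localization at $\m_{\pi,(\chi,\vartheta)}^S$ doing the slope bookkeeping you describe. Your write-up simply makes explicit the normalization compatibility between $U_\p$ and $U_\p^{\circ}$ under the map \eqref{OC}, which the paper leaves implicit.
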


Suppose $\pi_\p=\St_\p(\C).$
Then the corollary above shows that $\pi$ is non-critical at $\p$ if
\begin{enumerate}[(i)]
\item $k_\sigma=0$ for all $\sigma\in\Sigma_\p$ or
\item $F_\p=\Q_p$ or
\item $[F_\p:\Q_p]=2$ and $k_{\sigma_1}=k_{\sigma_2}$ where $\Sigma_\p=\{\sigma_1,\sigma_2\}.$ 
\end{enumerate}
In particular, this holds if $F=\Q$ or if $F$ is imaginary quadratic by \eqref{imagquad}.

\section{Automorphic L-invariants}
The main aim of this section is to define automorphic $\LI$-invariants for the representation $\pi$ under the assumption that the local component of $\pi$ at a prime $\p$ is Steinberg.
\subsection{Extensions of locally analytic Steinberg representations}\label{sec-extensions}
The following construction of extensions is due to Breuil (see \cite{Br}, Section 2.1).
Let $\lambda\colon F_\p^{\times}\to\Omega$ be a continuous homomorphism.
Note that $\lambda$ is automatically locally $\Q_p$-analytic.
We define $\tau_\lambda$ to be the two dimensional $\Omega$-representation of $B$ given by
$$\begin{pmatrix} a & u \\ 0 & d \end{pmatrix} \mapstoo \begin{pmatrix} 1 & \lambda(a/d) \\ 0 & 1 \end{pmatrix} $$
and put $\tau_{\lambda,\bm{k}_\p}=\tau_\lambda\otimes \chi_{\bm{k}_\p}$.
The short exact sequence
$$0 \too\chi_{\bm{k}_\p}\too \tau_{\lambda,\bm{k}_\p} \too \chi_{\bm{k}_\p}\too 0$$
induces the short exact sequence
$$0 \too \II_B(\chi_{\bm{k}_\p})\too \II_B(\tau_{\lambda,\bm{k}_\p})\too \II_B(\chi_{\bm{k}_\p})\too 0 $$
of locally analytic representations.
Pullback via $V_{\bm{k}_\p}(\Omega)\into \II_B(\chi_{\bm{k}_\p})$ and pushforward along $\II_B(\chi_{\bm{k}_\p})\onto \St_{\bm{k}_\p}^{\an}(\Omega)$ yields the exact sequence
\begin{align}\label{Steinbergclasses}
0 \too \St_{\bm{k}_\p}^{\an}(\Omega)\too \mathcal{E}_{\lambda,\bm{k}_\p}\too V_{\bm{k}_\p}(\Omega)\too 0.
\end{align}

\begin{Rem}
Given two locally $\Q_p$-analytic $\Omega$-representations $W_1$ and $W_2$ we denote by $\Ext^{1}_{\an}(W_1,W_2)$ the space of locally $\Q_p$-analytic extensions of $W_2$ by $W_1$. The map
$$\Hom_{\cont}(F_\p^{\times},\Omega)\too \Ext^{1}_{\an}(V_{\bm{k}_\p}(\Omega),\St_{\bm{k}_\p}^{\an}(\Omega)),\ \lambda \mapstoo \mathcal{E}_{\lambda,\bm{k}_\p}$$
is an isomorphism.
In the case $F_\p=\Q_p$ this is due to Breuil.
In fact, an analogous statement is true for more general split reductive groups (see \cite{Ding}, Theorem 1, and \cite{Ge4}, Theorem 2.15).
\end{Rem}

We put
$$\mathcal{E}_{\bm{k}_\p}^\infty= \mathcal{E}^\infty\otimes_\Omega V_{\bm{k}_\p}$$
where $\mathcal{E}^\infty$ is the smooth extension of \eqref{short}.
By definition we have an exact sequence
\begin{align*}
0 \too \St_{\bm{k}_\p}^\infty(\Omega) \too \mathcal{E}_{\bm{k}_\p}^\infty \too V_{\bm{k}_\p} \too 0.
\end{align*}
It is easy to see that the extension $\mathcal{E}_{\bm{k}_\p}^\infty$ is mapped to a multiple of $\mathcal{E}_{\ord_\p,\bm{k}_\p}$ under the map $\Ext^{1}_{\an}(V_{\bm{k}_\p}(\Omega),\St_{\bm{k}_\p}^{\infty}(\Omega))\to \Ext^{1}_{\an}(V_{\bm{k}_\p}(\Omega),\St_{\bm{k}_\p}^{\an}(\Omega))$.

\subsection{Definition of the L-invariant}\label{definition}
We assume for the rest of this article that $\pi_\p$ is the Steinberg representation.

For a continuous homomorphism $\lambda\colon F_\p^\times\to \Omega$ we let
$$c_{\lambda}^\epsilon\colon\HH^q_{\Omega,\cont}(X^{\p}_{K^{\p}},\St_{\bm{k}_\p}^{\an}(\Omega),V_{\bm{k}^\p}(\Omega)^\vee)^\epsilon_{\m_\pi^{S}}\too
\HH^{q+1}_{\Omega}(X^{\p}_{K^{\p}},V_{\bm{k}_\p}(\Omega),V_{\bm{k}^\p}(\Omega)^\vee)^\epsilon_{\m_\pi^{S}}
$$
be the boundary map induced by the dual of the short exact sequence \eqref{Steinbergclasses}.
This map is clearly a $\T^{\p}_{K^\p}(\Omega)$-module homomorphism.

\begin{Def}\label{defdef}
The automorphic $\LI$-invariant of $\pi$ at $\p$
$$\LI_\p(\pi)^\epsilon\subseteq \Hom_{\cont}(F_\p^\times,\Omega)$$
is the kernel of the map $\lambda\mapsto c_{\lambda}^\epsilon.$ 
\end{Def}

\begin{Pro}
Assume that $\pi$ is non-critical at $\p$.
Then the codimension of the $\LI$-invariant $\LI_\p(\pi)^\epsilon\subseteq \Hom_{\cont}(F_\p^\times,\Omega)$ is equal to one.
Moreover, it does not contain the space of locally constant characters.
\end{Pro}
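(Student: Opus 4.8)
The plan is to analyze the boundary map $c_\lambda^\epsilon$ by factoring the extension sequence \eqref{Steinbergclasses} through the locally analytic principal series. Recall that $\mathcal{E}_{\lambda,\bm{k}_\p}$ is built from $\II_B(\tau_{\lambda,\bm{k}_\p})$ by pullback along $V_{\bm{k}_\p}(\Omega)\into\II_B(\chi_{\bm{k}_\p})$ and pushforward along $\II_B(\chi_{\bm{k}_\p})\onto\St^{\an}_{\bm{k}_\p}(\Omega)$. Dualizing and passing to cohomology, the map $\lambda\mapsto c_\lambda^\epsilon$ becomes the composition of: the boundary map attached to $0\to\chi_{\bm{k}_\p}\to\tau_{\lambda,\bm{k}_\p}\to\chi_{\bm{k}_\p}\to 0$ (which depends linearly on $\lambda$), followed by restriction/corestriction maps coming from the pullback and pushforward. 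The first observation is that $\lambda\mapsto c_\lambda^\epsilon$ is $\Omega$-linear in $\lambda$, so $\LI_\p(\pi)^\epsilon$ is genuinely a subspace and it suffices to compute the rank of the linear map $\Hom_{\cont}(F_\p^\times,\Omega)\to\HH^{q+1}_\Omega(X^\p_{K^\p},V_{\bm{k}_\p}(\Omega),V_{\bm{k}^\p}(\Omega)^\vee)^\epsilon_{\m_\pi^S}$.

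The key step is to identify the target and source in sufficiently explicit terms. By \eqref{swap}, $\HH^{q+1}_\Omega(X^\p_{K^\p},V_{\bm{k}_\p}(\Omega),V_{\bm{k}^\p}(\Omega)^\vee)\cong\HH^{q+1}(X_{K^\p},\underline{V_{\bm{k}}(\Omega)^\vee})$ and by Theorem \ref{dim} the localization at $\m_\pi^S$ of the latter in degree $q+1$ is $(\pi_\Omega^\infty)^{K}$-isotypic with the correct multiplicities; in particular the $\epsilon$-component is cut out cleanly. On the source side, Proposition \ref{clear} (non-criticality) lets us replace $\St^{\an}_{\bm{k}_\p}(\Omega)$-cohomology in degree $q$ by $\St^\infty_{\bm{k}_\p}(\Omega)$-cohomology, which by Lemma \ref{ordiso} (the isomorphism $c_\infty^\epsilon$) and Corollary \ref{irreducible} is identified with $(\pi_\Omega\pinfty)^{K_\p}$, an absolutely irreducible $\T^\p_{K^\p}(\Omega)$-module. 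So the map $c_\lambda^\epsilon$ is a $\T^\p_{K^\p}(\Omega)$-equivariant map between (after localizing, and using multiplicity from Theorem \ref{dim}) essentially a one-dimensional space of Hecke-eigenforms on the source and on the target; the upshot is that $c_\lambda^\epsilon$ is either zero or injective as a map of Hecke modules, and the whole assignment $\lambda\mapsto c_\lambda^\epsilon$ lands in a one-dimensional $\Omega$-space of such Hecke-equivariant maps. Therefore $\LI_\p(\pi)^\epsilon$ has codimension at most one, and codimension exactly one precisely when the map is not identically zero.

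To show non-vanishing — and simultaneously that $\LI_\p(\pi)^\epsilon$ does not contain the locally constant characters — I would test against a specific $\lambda$, namely $\lambda=\ord_\p$ (or any ramified/locally constant character). By the remark following \eqref{Steinbergclasses}, the smooth extension $\mathcal{E}^\infty_{\bm{k}_\p}$ maps to a nonzero multiple of $\mathcal{E}_{\ord_\p,\bm{k}_\p}$ under $\Ext^1_{\an}(V_{\bm{k}_\p},\St^\infty_{\bm{k}_\p})\to\Ext^1_{\an}(V_{\bm{k}_\p},\St^{\an}_{\bm{k}_\p})$. Hence $c_{\ord_\p}^\epsilon$ agrees, up to the identifications above and a nonzero scalar, with the smooth boundary map $c_\infty^\epsilon$ of Lemma \ref{ordiso} composed with the inclusion $\kappa^\ast$ of Proposition \ref{clear}; since $c_\infty^\epsilon$ is an \emph{isomorphism} (Lemma \ref{ordiso}) and $\kappa^\ast$ is an isomorphism by non-criticality (Proposition \ref{clear}), the composition is nonzero. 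More carefully: $c_\infty^\epsilon$ is the boundary map for the smooth sequence \eqref{short}, which sits inside the analytic sequence \eqref{Steinbergclasses} for $\lambda=\ord_\p$ via $\kappa$, so chasing the connecting homomorphisms through the commutative ladder relating \eqref{short} and \eqref{Steinbergclasses} shows $c_{\ord_\p}^\epsilon$ is (a nonzero multiple of) the isomorphism $c_\infty^\epsilon$. Thus $c_{\ord_\p}^\epsilon\neq 0$, so $\ord_\p\notin\LI_\p(\pi)^\epsilon$; since any locally constant character of $F_\p^\times$ is a scalar multiple of $\ord_\p$, none of them lie in $\LI_\p(\pi)^\epsilon$, and in particular the map $\lambda\mapsto c_\lambda^\epsilon$ is not identically zero, giving codimension exactly one.

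The main obstacle I anticipate is the compatibility/functoriality bookkeeping in the last paragraph: one must carefully verify that the boundary map attached to \eqref{Steinbergclasses} for the distinguished character $\lambda=\ord_\p$ really does restrict, under the embedding $\kappa\colon\St^\infty_{\bm{k}_\p}\into\St^{\an}_{\bm{k}_\p}$ and the inclusion $V_{\bm{k}_\p}\into\mathcal{E}^\infty_{\bm{k}_\p}$, to the smooth boundary map $c_\infty^\epsilon$ of Lemma \ref{ordiso} — i.e.\ that the remark identifying $\mathcal{E}^\infty_{\bm{k}_\p}$ with a multiple of $\mathcal{E}_{\ord_\p,\bm{k}_\p}$ is compatible with the connecting-homomorphism formalism after dualizing and localizing. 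This is essentially a diagram chase through Breuil's extension construction, but it is the step where the actual content of ``non-criticality'' (via Proposition \ref{clear}) and the exactness of \eqref{short} genuinely enter, and it must be spelled out with some care to avoid hand-waving.
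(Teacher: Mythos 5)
Your proposal is correct and follows essentially the same route as the paper: codimension at most one comes from the one-dimensionality of the space of $\T^{\p}_{K^\p}(\Omega)$-equivariant maps between the source and target (via Proposition \ref{clear}, Lemma \ref{ordiso} and the irreducibility statement), and exact codimension one plus the statement about locally constant characters follows from the identification $c_{\ord_\p}^\epsilon=c_{\infty}^\epsilon\circ\kappa^\ast$, which is the remark at the end of Section \ref{sec-extensions} that the paper also invokes. The compatibility you flag as the main obstacle is precisely what that remark supplies, so no new argument is needed beyond what you wrote.
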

\begin{proof}
Combining Proposition \ref{clear} and Lemma \ref{ordiso} we see that the space of $\T_{K^\p}(\Omega)$-linear homomorphisms between the two modules $\HH^q_{\Omega,\cont}(X^{\p}_{K^{\p}},\St_{\bm{k}_\p}^{\an}(\Omega),V_{\bm{k}^\p}(\Omega)^\vee)^\epsilon_{\m_\pi^{S}}$ and $\HH^{q+1}_{\Omega}(X^{\p}_{K^{\p}},V_{\bm{k}_\p}(\Omega),V_{\bm{k}^\p}(\Omega)^\vee)^\epsilon_{\m_\pi^{S}}$ is one-dimensional.
Thus, the $\LI$-invariant is of codimension at most one.
By the remark at the end of Section \ref{sec-extensions} we have
$c_{\ord_\p}^\epsilon=c_{\infty}^\epsilon \circ \kappa^\ast.$
By Lemma \ref{ordiso} the homomorphism $c_{\infty}^\epsilon$ is an isomorphism, while $\kappa^\ast$ an isomorphism by Proposition \ref{clear}.
\end{proof}

\begin{Rem}
As in \cite{Ge3} one could also define automorphic $\LI$-invariants for higher degree cohomology groups, for which its $\pi$-isotypic component does not vanish.
As these $\LI$-invariants neither seem to show up in exceptional zero formulas nor are they used to define (plectic) Darmon cycles we will not consider them here.
\end{Rem}

\section{P-adic families}
For this section we assume that $F$ is totally real that $\pi_\p(\C)$ is the Steinberg representation and $\pi$ is non-critical at $\p$.
We give a formula for the automorphic $\LI$-invariant in terms of derivatives of $U_\p$-eigenvalues of $p$-adic families passing through $\pi$.
Comparing with the corresponding formula for the Fontaine--Mazur $\LI$-invariant of the corresponding Galois representation we deduce that automorphic and Fontaine--Mazur $\LI$-invariants agree.
\subsection{The weight space}\label{weightspace}
Let $\Omega$ be a finite extension of $E_p$. Define the (partial) weight space $\mathcal{W}_\p$ to be the rigid analytic space over $\Omega$ associated to the completed group algebra $\mathcal{O}_\Omega\llbracket \mathcal{O}_{\p}^\times \rrbracket$.
There is a universal character $$\kappa^\mathrm{un}\colon \mathcal{O}_\p^\times\longrightarrow (\mathcal{O}_\Omega\llbracket \mathcal{O}_{\p}^\times\rrbracket)^\times.$$
Let $\mathcal{U}\subseteq\mathcal{W}_\p$ be an affinoid and $\mathcal{O}(\mathcal{U})$ be the ring of its rigid analytic functions. We will denote by $\kappa^\mathrm{un}_\mathcal{U}\colon \mathcal{O}_\p^\times\rightarrow \mathcal{O}(\mathcal{U})^\times$ the restriction of the universal character to $\mathcal{U}$.
For an affinoid $\mathcal{U}\subseteq \mathcal{W}_\p$ and a locally analytic character $\chi\colon B\cap I_\p\rightarrow \mathcal{O}(\mathcal{U})^\times$ recall from section \ref{overconvergent} the $\mathcal{O}(\mathcal{U})[I_\p]$-module $\mathcal{A}_\chi^n$ defined as the locally $n$-analytic induction of $\chi$ to $I_\p$, and the cohomology groups $\HH^d(X_{K^\p\times I_\p},\mathcal{D}_{\chi,\bm{k}^\p}^n)$. 
If $\chi$ is the universal character $\kappa_{\mathcal{U}}^\mathrm{un}$, we simply write $\HH^d(X_{K^\p\times I_\p},\mathcal{D}_{\mathcal{U},\bm{k}^\p}^n)$ in place of $\HH^d(X_{K^\p\times I_\p},\mathcal{D}_{\kappa_{\mathcal{U}}^\mathrm{un},\bm{k}^\p}^n)$.

\subsection{Etaleness at $\mathfrak{m}_\pi$}
Let $\mathcal{U}$ be an admissible open affinoid in $\mathcal{W}_\p$ containing $\bm{k}_\p$ and let $\mathcal{O}(\mathcal{U})_{\bm{k}_p}$ be the rigid localization of $\mathcal{O}(\mathcal{U})$ at $\bm{k}_\p\in \mathcal{U}$. It is the local ring defined as
$$\mathcal{O}(\mathcal{U})_{\bm{k}_\p}=\varinjlim_{\bm{k}_\p\in \mathcal{U}'\subset \mathcal{U}}\mathcal{O}(\mathcal{U}')$$
where the limit is taken over all admissible open sub-affinoid $\mathcal{U}'_\p$ in $\mathcal{U}$ containing $\bm{k_\p}$. Thus, it contains the algebraic localization of $\mathcal{O}(\mathcal{U})$ at the maximal ideal $\mathfrak{m}_{\bm{k}_\p}$ of $\bm{k}_\p$.

\begin{Thm}\label{etaleness}
Up to shrinking $\mathcal{U}$ to a small enough open affinoid containing $\bm{k}_\p$ the following holds:
$$\HH^d (X_{K^\p\times I_\p},\mathcal{D}_{\mathcal{U},\bm{k}^\p}^n)^\epsilon_{(\mathfrak{m}^S_\pi,U_\p-1)}= 0\quad \mbox{for every}\ d\neq q,$$
for $d=q$ it is a free $\mathcal{O}(\mathcal{U})_{\bm{k}_\p}$-module of finite rank and the map of $\mathcal{O}(\mathcal{U})_{\bm{k}_\p}$-modules obtained by localizing the composition of \eqref{OC} with the map induced by reduction to $\mathcal{D}_{\bm{k}}^n$ induces an isomorphism
$$\HH^q(X_{K^\p\times I_\p},\mathcal{D}_{\mathcal{U},\bm{k}^\p}^n)^\epsilon_{(\mathfrak{m}^S_\pi,U_\p-1)}\otimes_{\mathcal{O}(\mathcal{U})_{\bm{k}_\p}}\mathcal{O}(\mathcal{U})_{\bm{k}_\p}/\mathfrak{m}_{\bm{k}_\p}\xlongrightarrow{\cong} \HH^q(X_{K^\p\times I_\p},V_{\bm{k}}^\vee)^\epsilon_{(\mathfrak{m}^S_\pi,U_\p-1)}.$$
Moreover, the operator $U_\p^\circ$ acts on it via a scalar $\alpha_\p\in \mathcal{O}(\mathcal{U})_{\bm{k}_\p}^\times$.
\end{Thm}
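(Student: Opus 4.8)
The plan is to deduce Theorem \ref{etaleness} from the classicality result of Theorem \ref{AS} together with the finiteness and control already established in Proposition \ref{smoothduals} and Corollary \ref{irreducible}, via a standard ``etaleness over weight space'' argument à la Ash--Stevens/Hansen. First I would fix a rational number $h$ smaller than $\frac{1}{e_\p}\min_{\sigma\in\Sigma_\p}(k_\sigma+1)$ and use the existence of slope $\leq h$ decompositions over an affinoid neighbourhood $\mathcal{U}$ of $\bm{k}_\p$ (this is the $\mathcal{O}(\mathcal{U})$-version of Theorem \ref{AS}, available by the references cited there) so that $\HH^d(X_{K^\p\times I_\p},\mathcal{D}_{\mathcal{U},\bm{k}^\p}^n)^{\epsilon,\leq h}$ is a finitely generated $\mathcal{O}(\mathcal{U})$-module for every $d$, commuting with base change along $\mathcal{O}(\mathcal{U})\to\mathcal{O}(\mathcal{U})/\m_{\bm{k}_\p}=\Omega$. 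After localizing at the maximal ideal $(\m_\pi^S,U_\p-1)$, the specialization at $\bm{k}_\p$ recovers $\HH^d(X_{K^\p\times I_\p},\mathcal{D}_{\bm{k}}^n)^{\epsilon,\leq h}_{(\m_\pi^S,U_\p-1)}$, which by the classicality isomorphism of Theorem \ref{AS} is $\HH^d(X_{K^\p\times I_\p},V_{\bm{k}}^\vee)^{\epsilon,\leq h}_{(\m_\pi^S,U_\p-1)}$, and by \eqref{relation}, Corollary \ref{irreducible} and Theorem \ref{dim} this vanishes for $d\neq q$ and is the absolutely irreducible $\T^\p_{K^\p}(\Omega)$-module $(\pi_\Omega\pinfty)^{K_\p}$ (after taking the $(\m_\pi^S,U_\p-1)$-part) for $d=q$.

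The second step is to upgrade this fibrewise information to a statement over the local ring $\mathcal{O}(\mathcal{U})_{\bm{k}_\p}$. Since the complex computing these cohomology groups can be represented, after taking slope $\leq h$ part, by a bounded complex of finitely generated projective $\mathcal{O}(\mathcal{U})$-modules (this is part of the machinery underlying Theorem \ref{AS}), and since its cohomology in degrees $\neq q$ vanishes after specializing at $\bm{k}_\p$ and localizing at $(\m_\pi^S,U_\p-1)$, Nakayama's lemma applied over the local ring $\mathcal{O}(\mathcal{U})_{\bm{k}_\p}$ forces $\HH^d(X_{K^\p\times I_\p},\mathcal{D}_{\mathcal{U},\bm{k}^\p}^n)^\epsilon_{(\m_\pi^S,U_\p-1)}=0$ for $d\neq q$ after possibly shrinking $\mathcal{U}$; the complex then becomes a resolution concentrated so that its unique nonzero cohomology $\HH^q$ is the cokernel of a map of projectives whose fibre at $\bm{k}_\p$ has constant rank, hence is itself finite free over $\mathcal{O}(\mathcal{U})_{\bm{k}_\p}$ after shrinking. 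Base change to $\mathcal{O}(\mathcal{U})_{\bm{k}_\p}/\m_{\bm{k}_\p}$ then yields the claimed isomorphism with $\HH^q(X_{K^\p\times I_\p},V_{\bm{k}}^\vee)^\epsilon_{(\m_\pi^S,U_\p-1)}$, the displayed map being the localization of the composite of \eqref{OC} with reduction $\mathcal{D}^n_{\mathcal{U},\bm{k}^\p}\to\mathcal{D}^n_{\bm{k}}$.

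Finally, for the statement about $U_\p^\circ$: the operator $U_\p^\circ$ acts $\mathcal{O}(\mathcal{U})$-linearly on the finite free module $\HH^q(X_{K^\p\times I_\p},\mathcal{D}_{\mathcal{U},\bm{k}^\p}^n)^\epsilon_{(\m_\pi^S,U_\p-1)}$, so it is given by a matrix over $\mathcal{O}(\mathcal{U})_{\bm{k}_\p}$; its reduction mod $\m_{\bm{k}_\p}$ acts on the absolutely irreducible $\T^\p_{K^\p}(\Omega)$-module $(\pi_\Omega\pinfty)^{K_\p}$ and, commuting with the absolutely irreducible Hecke action, is scalar equal to the $U_\p$-eigenvalue $1$ on that specialization (this is exactly the localization at $(\m_\pi^S,U_\p-1)$). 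Hence, after shrinking $\mathcal{U}$ so that the $\T^\p_{K^\p}(\mathcal{O}(\mathcal{U})_{\bm{k}_\p})$-action remains ``irreducible enough'' — concretely, so that the Hecke algebra acts through $\mathcal{O}(\mathcal{U})_{\bm{k}_\p}$ itself by multiplicity one along the family — $U_\p^\circ$ must act by a scalar $\alpha_\p\in\mathcal{O}(\mathcal{U})_{\bm{k}_\p}$, and since $\alpha_\p\equiv 1\bmod\m_{\bm{k}_\p}$ it lies in $\mathcal{O}(\mathcal{U})_{\bm{k}_\p}^\times$. The main obstacle I anticipate is precisely this last point: justifying that the full Hecke action (including $U_\p^\circ$) on the family is ``as irreducible as'' on the central fibre, i.e. that after localizing at $(\m_\pi^S,U_\p-1)$ and shrinking $\mathcal{U}$ the module is free of rank one over $\mathcal{O}(\mathcal{U})_{\bm{k}_\p}$ as a Hecke module — this requires either invoking multiplicity one for the $p$-adic family (e.g. via the étaleness of the relevant eigenvariety over weight space at the point $\bm{k}_\p$, which is where ``étaleness'' in the theorem's title comes from) or a direct deformation-theoretic argument; the delicate part is ensuring no accidental extra Hecke eigensystems or Jordan blocks appear upon deformation, which is handled by choosing $\mathcal{U}$ small and using that the fibre is absolutely irreducible with $U_\p$-eigenvalue $1$ of multiplicity one.
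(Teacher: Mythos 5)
Your overall strategy is the paper's: the proof there deduces the vanishing, freeness and control statements by quoting the argument of Theorem 2.14 of \cite{BDJ}, which is exactly the slope-decomposition over $\mathcal{O}(\mathcal{U})$, Nakayama and base-change argument you sketch, with the key classical input being $\HH^d(X_{K^\p\times I_\p},V_{\bm{k}}(\Omega)^\vee)_{(\m_\pi^S,U_\p-1)}=0$ for $d\neq q$ coming from Theorem \ref{dim} (note that in this section $F$ is totally real, so $\delta=0$ and $\binom{\delta}{d-q}=0$ for $d\neq q$). So for the first two claims your proposal and the paper agree.

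The one place you stop short is the scalar statement for $U_\p^\circ$, which you flag as the main obstacle and propose to settle via multiplicity one for the $p$-adic family or \'etaleness of the eigenvariety; neither is needed (and appealing to \'etaleness would be close to circular, since that is essentially what the theorem establishes). The intended argument is the standard Burnside--Nakayama commutant argument, which is what the paper's hint about absolute irreducibility means: set $R=\mathcal{O}(\mathcal{U})_{\bm{k}_\p}$, let $M=\HH^q(X_{K^\p\times I_\p},\mathcal{D}^n_{\mathcal{U},\bm{k}^\p})^\epsilon_{(\m_\pi^S,U_\p-1)}$, finite free over $R$, and let $A\subseteq\End_R(M)$ be the $R$-subalgebra generated by the image of $\T^\p_{K^\p}(\Omega)$. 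The control isomorphism identifies $M/\m_{\bm{k}_\p}M$ with the classical localization, which is an absolutely irreducible $\T^\p_{K^\p}(\Omega)$-module; by Jacobson density the image of $A$ in $\End_\Omega(M/\m_{\bm{k}_\p}M)$ is everything, so $A+\m_{\bm{k}_\p}\End_R(M)=\End_R(M)$, and Nakayama (applied to the finite $R$-module $\End_R(M)$) gives $A=\End_R(M)$. Since $U_\p^\circ$ commutes with $A$, it lies in the centre of $\End_R(M)$, i.e.\ it acts by a scalar $\alpha_\p\in R$; no control of eigensystems or Jordan blocks in the family is required, which disposes of the worry you raise. Finally, a small correction: the reduction of $\alpha_\p$ modulo $\m_{\bm{k}_\p}$ is not $1$ but the eigenvalue of $U_\p^\circ=\prod_{\sigma\in\Sigma_\p}\sigma(\varpi_\p)^{k_\sigma/2}\,U_\p$ on the classical fibre, namely $\prod_{\sigma\in\Sigma_\p}\sigma(\varpi_\p)^{k_\sigma/2}$; this is still nonzero, so the conclusion $\alpha_\p\in R^\times$ stands.
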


\begin{proof}
Recall that by Theorem \ref{dim} we have
\begin{equation*}
\HH^d(X_{K^\p\times I_\p},V_{\bm{k}}(\Omega)^\vee)_{(\mathfrak{m}_\pi^S,U_\p-1)}=0 \quad \mbox{for all } d\neq q.
\end{equation*}
The first claims follow using the same arguments as in the proof of \cite[Theorem 2.14]{BDJ}.
The statement about the operator $U_\p^\circ$ can be deduced from the fact that
\begin{equation*}
\HH^q(X_{K^\p\times I_\p},V_{\bm{k}}(\Omega)^\vee)^\epsilon_{(\mathfrak{m}_\pi^S,U_\p-1)}
\end{equation*}
is an absolutely irreducible $\mathbb{T}_{K^\p}(\Omega)$-module.
\end{proof}

\subsection{Infinitesimal deformations and $\mathcal{L}$-invariants}

Let $\Omega[\varepsilon]:=\Omega[X]/(X^2)$ be the $\Omega$-algebra of dual numbers over $\Omega$ and $\pi\colon \Omega[\varepsilon]\rightarrow \Omega$ be the the natural surjection sending $\varepsilon$ to $0$. It should be thought as the space of tangent vectors to a fixed $\Omega$-scheme at a fixed point in the following sense. If $X=\text{Spec }(A)$ is an affine $\Omega$-scheme and $x\colon A\rightarrow A/\mathfrak{m}_x=\Omega$ is a $\Omega$-valued point, then the space of morphisms $\bm{v}_x \colon A\rightarrow \Omega[\varepsilon]$ such that $\pi\circ\bm{v}_x=x$ is identified with the tangent space of $X$ at $x$. 

Let $\mathcal{U}$ be an admissible open affinoid containing $\bm{k}$ and $\chi\colon B^\times\rightarrow \mathcal{O}(\mathcal{U})^\times$ a locally analytic character, that we identify with an element of $\Hom(F_\p^\times,\mathcal{O}(\mathcal{U})^\times)$ as in section \ref{ident}.  

Let $\bm{v}\colon \mathcal{O}(\mathcal{U})\rightarrow \Omega[\varepsilon]$ be an element of the tangent space of $\mathcal{U}$ at $\bm{k}$. Then the pullback $\chi_{\bm{v}}=\bm{v}\circ \chi\in \Hom(F_\p^\times, \Omega[\varepsilon])$ of $\chi$ along $\bm{v}$ can be written in a unique way as
\begin{equation}\label{first_order}
\chi_{\bm{v}}=\overline{\chi}(1+\partial_{\bm{v}}(\chi) \varepsilon),
\end{equation}
where $\overline{\chi}\colon F_\p^\times \rightarrow (\mathcal{O}(\mathcal{U})/\mathfrak{m}_{\bm{k}})^\times=\Omega^\times$ denotes the reduction of $\chi$ modulo $\mathfrak{m}_{\bm{k}}$, and $\partial_{\bm{v}}(\chi)$ is a homomorphism $F_\p^\times\rightarrow \Omega$.

Now, assume that $\chi\colon B^\times\rightarrow \mathcal{O}(\mathcal{U})^\times$ is a locally analytic character such that $\chi \pmod{\mathfrak{m}_{\bm{k}}}=\cf_{\bm{k}_\p}$. For an element $\bm{v}$ in the tangent space of $\mathcal{U}$ at $\bm{k}$, write $\chi_{\bm{v}}=\bm{v}\circ \chi$ as above $\chi_{\bm{v}}=\cf_{\bm{k}_\p}(1+\partial_{\bm{v}}(\chi)\varepsilon)$.
Consider the map induced in cohomology by the reduction of $\chi$ modulo $\mathfrak{m}_{\bm{k}}$:
$$\mathrm{red}_{\chi}\colon \HH_{\mathcal{O}(\mathcal{U}),\cont}^q(X_{K^\p}^\p,\mathbb{I}_B(\chi),V_{\bm{k}^\p}^\vee)_{\mathfrak{m}_\pi^S}^\epsilon\longrightarrow \HH^q_{\Omega,\cont}(X_{K^\p}^\p,\mathbb{I}_B(\cf_{\bm{k}_\p}),V_{\bm{k}^\p}^\vee)_{\mathfrak{m}_\pi^S}^\epsilon.$$
Then, the following holds.

\begin{Pro}\label{L-inv}
If $\mathrm{red}_{\chi}$ is surjective, then $\partial_{\bm{v}}(\chi)$ belongs to $\mathcal{L}_\p(\pi)^\epsilon$ for every element $\bm{v}$ of the tangent space of $\mathcal{U}$ at $\bm{k}$.
\end{Pro}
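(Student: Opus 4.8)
The plan is to reduce the statement to the vanishing of the boundary map $c_{\partial_{\bm v}(\chi)}^\epsilon$ of Definition \ref{defdef}, and to deduce that vanishing from a comparison of boundary maps together with the hypothesis on $\mathrm{red}_\chi$. Fix $\bm v$ in the tangent space of $\mathcal U$ at $\bm k$ and set $\lambda:=\partial_{\bm v}(\chi)\in\Hom_{\cont}(F_\p^\times,\Omega)$. By Definition \ref{defdef} it is enough to prove $c_\lambda^\epsilon=0$; and since the source $\HH^q_{\Omega,\cont}(X^\p_{K^\p},\St_{\bm k_\p}^{\an}(\Omega),V_{\bm k^\p}(\Omega)^\vee)^\epsilon_{\m_\pi^S}$ of $c_\lambda^\epsilon$ is an absolutely irreducible $\T^\p_{K^\p}(\Omega)$-module --- by Proposition \ref{clear}, Corollary \ref{irreducible} and \eqref{swap} --- it suffices to exhibit a single non-zero class in that source which $c_\lambda^\epsilon$ annihilates.

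The main point I would establish is a description of $c_\lambda^\epsilon$ via a first-order deformation. By \eqref{first_order} the pullback $\chi_{\bm v}=\bm v\circ\chi$ equals $\cf_{\bm k_\p}(1+\lambda\varepsilon)$, so $\Omega[\varepsilon]$, viewed through $\chi_{\bm v}$ as a $B$-representation over $\Omega$, is an extension of $\cf_{\bm k_\p}$ by $\cf_{\bm k_\p}$ whose class (under the identification in \eqref{identification} and the Remark following \eqref{Steinbergclasses}) is $\lambda$; hence there is a $G_\p$-equivariant isomorphism over $\Omega$ between $\II_B(\chi_{\bm v})$ and Breuil's extension $\II_B(\tau_{\lambda,\bm k_\p})$, compatible with the canonical sub- and quotient copies of $\II_B(\cf_{\bm k_\p})$. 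Applying $\HH^\bullet_{\Omega,\cont}(X^\p_{K^\p},-,V_{\bm k^\p}(\Omega)^\vee)^\epsilon_{\m_\pi^S}$ to the short exact sequence $0\to\II_B(\cf_{\bm k_\p})\to\II_B(\chi_{\bm v})\to\II_B(\cf_{\bm k_\p})\to 0$ (first map: multiplication by $\varepsilon$) produces a connecting homomorphism $\partial_\lambda$ which, under this identification, agrees with the boundary map of $\II_B(\tau_{\lambda,\bm k_\p})$. Since $\mathcal E_{\lambda,\bm k_\p}$ of \eqref{Steinbergclasses} is obtained from $\II_B(\tau_{\lambda,\bm k_\p})$ by pulling back along $V_{\bm k_\p}(\Omega)\hookrightarrow\II_B(\cf_{\bm k_\p})$ and pushing forward along $\II_B(\cf_{\bm k_\p})\twoheadrightarrow\St_{\bm k_\p}^{\an}(\Omega)$, and since $\HH^d_{\Omega}(X^\p_{K^\p},V_{\bm k_\p}(\Omega),V_{\bm k^\p}(\Omega)^\vee)^\epsilon_{\m_\pi^S}\cong\HH^d_{\Omega}(X^\p_{K^\p},\Omega,V_{\bm k}(\Omega)^\vee)^\epsilon_{\m_\pi^S}=0$ for $d\le q$ by \eqref{swap} and Proposition \ref{smoothduals} (the Steinberg representation having no $G_\p$-fixed vectors), the natural map $\iota$ from the source of $c_\lambda^\epsilon$ to $\HH^q_{\Omega,\cont}(X^\p_{K^\p},\II_B(\cf_{\bm k_\p}),V_{\bm k^\p}(\Omega)^\vee)^\epsilon_{\m_\pi^S}$ is an isomorphism, and naturality of connecting maps in the evident morphism of short exact sequences yields an identity $c_\lambda^\epsilon=\mathrm{pr}\circ\partial_\lambda\circ\iota$, where $\mathrm{pr}$ is induced by $V_{\bm k_\p}(\Omega)\hookrightarrow\II_B(\cf_{\bm k_\p})$. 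In particular $c_\lambda^\epsilon$ annihilates $\iota^{-1}(w)$ whenever $\partial_\lambda(w)=0$.

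Finally I would invoke the hypothesis. As $\mathrm{red}_\chi$ is surjective onto the non-zero module $\HH^q_{\Omega,\cont}(X^\p_{K^\p},\II_B(\cf_{\bm k_\p}),V_{\bm k^\p}(\Omega)^\vee)^\epsilon_{\m_\pi^S}$, choose a class $z$ in its source with $w:=\mathrm{red}_\chi(z)\ne 0$. Pushing $z$ forward along $\bm v$ --- using the base-change identification $\II_B(\chi)\otimes_{\mathcal O(\mathcal U),\bm v}\Omega[\varepsilon]\cong\II_B(\chi_{\bm v})$ --- yields a class over $\Omega[\varepsilon]$ whose reduction modulo $\varepsilon$ is $w$, because reduction modulo $\varepsilon$ composed with $\bm v$ is the evaluation at $\bm k$ defining $\mathrm{red}_\chi$. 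Thus $w$ lies in the image of the reduction map in the long exact sequence attached to $0\to\II_B(\cf_{\bm k_\p})\to\II_B(\chi_{\bm v})\to\II_B(\cf_{\bm k_\p})\to 0$, whence $\partial_\lambda(w)=0$. Then $\iota^{-1}(w)$ is a non-zero class in the irreducible source of $c_\lambda^\epsilon$ killed by $c_\lambda^\epsilon$, so $c_\lambda^\epsilon=0$, i.e.\ $\lambda=\partial_{\bm v}(\chi)\in\LI_\p(\pi)^\epsilon$.

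The step I expect to be the main obstacle is the identification in the second paragraph: matching the connecting map $\partial_\lambda$ of the ``$\varepsilon$-extension'' with the boundary map $c_\lambda^\epsilon$ that defines the $\LI$-invariant. This requires identifying $\II_B(\chi_{\bm v})$ over $\Omega$ with $\II_B(\tau_{\lambda,\bm k_\p})$ (keeping track of a possible normalising scalar, which is irrelevant since $\LI_\p(\pi)^\epsilon$ is a kernel), chasing the pullback/pushforward producing $\mathcal E_{\lambda,\bm k_\p}$ through the relevant long exact sequences while being attentive to the contravariance of $\Hom_\Omega(-,V_{\bm k^\p}(\Omega)^\vee)$, and checking the base-change compatibility $\II_B(\chi)\otimes_{\mathcal O(\mathcal U),\bm v}\Omega[\varepsilon]\cong\II_B(\chi_{\bm v})$ and its compatibility with the formation of continuous cohomology. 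By contrast, the irreducibility and vanishing inputs (Proposition \ref{clear}, Lemma \ref{ordiso}, Corollary \ref{irreducible}, Proposition \ref{smoothduals}) are immediate.
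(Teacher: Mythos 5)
Your proposal is correct and follows essentially the same route as the paper: you identify the first-order deformation $\chi_{\bm v}$ with Breuil's extension $\tau_{\partial_{\bm v}(\chi),\bm k_\p}$, relate $c_{\partial_{\bm v}(\chi)}^\epsilon$ to the connecting map of the induced sequence $0\to\II_B(\cf_{\bm k_\p})\to\II_B(\chi_{\bm v})\to\II_B(\cf_{\bm k_\p})\to 0$ by naturality, and use that reduction modulo $\varepsilon$ after base change along $\bm v$ coincides with reduction modulo $\mathfrak m_{\bm k}$, so surjectivity of $\mathrm{red}_\chi$ kills that connecting map --- exactly the paper's argument. Your extra steps (showing $\iota$ is an isomorphism via the vanishing of $\HH^d(X^\p_{K^\p},\Omega,V_{\bm k}(\Omega)^\vee)_{\m^S_\pi}$ and finishing via irreducibility of the source as a Hecke module) are harmless variations; the paper simply concludes $\hat c^\epsilon_{\partial_{\bm v}(\chi)}=0$ outright from the surjectivity.
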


\begin{proof}
The locally analytic character $\chi_{\bm{v}}$ of $B$ over $\Omega[\varepsilon]$ can be seen as a two-dimensional representation $\tau_{\chi_{\bm{v}}}$ of $B$ over $\Omega$. It is in fact the representation that we denoted by $\tau_{\partial_{\bm{v}}(\chi),\bm{k}_\p}$ in section \ref{sec-extensions}. From the discussion in \ref{sec-extensions}, there is a commutative diagram
\begin{equation*}
\begin{tikzcd}
\HH^q(X_{K^\p}^\p,\mathbb{I}_B(\cf_{\bm{k}_\p}),V_{\bm{k}^\p}(\Omega)^\vee)_{\mathfrak{m}_\pi^S}^\epsilon\arrow[r, "\hat{c}_{\partial_{\bm{v}}(\chi)}^\epsilon"] & 
\HH^{q+1}(X_{K^\p}^\p,\mathbb{I}_B(\cf_{\bm{k}_\p}),V_{\bm{k}^\p}(\Omega)^\vee)_{\mathfrak{m}_\pi^S}^\epsilon\arrow[d]\\
\HH^q(X_{K^\p}^\p,\St_{\bm{k}_\p}^{\text{an}}(\Omega),V_{\bm{k}^\p}(\Omega)^\vee)_{\mathfrak{m}_\pi^S}^\epsilon\arrow[r, "c_{\partial_{\bm{v}}(\chi)}^\epsilon"] \arrow[u]& 
\HH^{q+1}(X_{K^\p}^\p,V_{\bm{k}_\p}(\Omega), V_{\bm{k}^\p}(\Omega)^\vee)_{\mathfrak{m}_\pi^S}^\epsilon
\end{tikzcd}
\end{equation*}
where $\hat{c}_{\partial_{\bm{v}}(\chi)}^\epsilon$ and $c_{\partial_{\bm{v}}(\chi)}^\epsilon$ are the boundary maps induced by the dual of the short exact sequences 
\begin{equation}\label{extension}
0\rightarrow \mathbb{I}_B(\cf_{\bm{k}_\p}) \rightarrow\mathbb{I}_B(\tau_{\chi_{\bm{v}}})\rightarrow \mathbb{I}_B (\cf_{\bm{k}_\p}) \rightarrow 0,
\end{equation}
and
\begin{equation*}
0\rightarrow \St_{\bm{k}_\p}^\text{an}(\Omega) \rightarrow\mathcal{E}_{\partial_{\bm{v}}(\chi),\bm{k}_\p}\rightarrow V_{\bm{k}_\p}(\Omega) \rightarrow 0,
\end{equation*}
respectively.
It is sufficient to prove that $\hat{c}_{\partial_{\bm{v}}(\chi)}^\epsilon$ is the zero map. This follows from the fact that the map
$$\HH^q(X_{K^\p}^\p,\mathbb{I}_B(\tau_{\chi_{\bm{v}}}),V_{\bm{k}^\p}(\Omega)^\vee)_{\mathfrak{m}_\pi^S}^\epsilon\rightarrow \HH^q(X_{K^\p}^\p,\mathbb{I}_B(\cf_{\bm{k}_\p}),V_{\bm{k}^\p}(\Omega)^\vee)_{\mathfrak{m}_\pi^S}^\epsilon$$
induced in cohomology by the dual of \eqref{extension} is surjective because it is induced by the reduction $\chi_{\bm{v}}\rightarrow \cf_{\bm{k}_\p}$ of $\chi_{\bm{v}}$ modulo $\varepsilon$, which is the same as the reduction $\chi\rightarrow \cf_{\bm{k}_\p}$ modulo $\mathfrak{m}_{\bm{k}}$ for every $\bm{v}$ by definition of tangent space. 
\end{proof}

Recall that we denoted by $\alpha_\p\in \mathcal{O}(\mathcal{U})^\times_{\bm{k}_\p}$ the eigenvalue of the Hecke operator $U_\p^\circ$ acting on $\HH^q(X_{K^\p\times I_\p},\mathcal{D}^n_{\mathcal{U},\bm{k}_\p})^\epsilon_{(\mathfrak{m}_\pi^S,U_\p-1)}$. Up to shrinking $\mathcal{U}$ we can assume that $\alpha_\p\in\mathcal{O}(\mathcal{U})^\times$. 

Let $\chi_{\alpha_\p}\colon  B^\times\rightarrow \mathcal{O}(\mathcal{U})^\times$ be the character defined by
\begin{align*}
\chi_{\alpha_\p}\vert_{B\cap I_\p} &= \kappa^\mathrm{un}_{\mathcal{U}}\vert_{\mathcal{O}_\p^\times}\\
\chi_{\alpha_\p}(u_\p) &= \alpha_\p.
\end{align*}

Now we are ready to prove the main result of this section.
\begin{Thm}\label{mainthm}
For every element $\bm{v}$ of the tangent space of $\mathcal{U}$ at $\bm{k}$ we have
$$\partial_{\bm{v}}(\chi_{\alpha_\p})\in \mathcal{L}_\p(\pi)^\epsilon.$$
\end{Thm}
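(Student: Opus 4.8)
The plan is to deduce the theorem from Proposition~\ref{L-inv} by taking there $\chi = \chi_{\alpha_\p}$. First one has to check that $\chi_{\alpha_\p}$ satisfies the standing hypothesis of that proposition, namely that it reduces to $\cf_{\bm{k}_\p}$ modulo $\mathfrak{m}_{\bm{k}}$. This is immediate for the restriction of $\chi_{\alpha_\p}$ to $B\cap I_\p$, which is the universal character $\kappa^{\mathrm{un}}_{\mathcal{U}}$ and hence specializes at $\bm{k}$ to the weight character $\cf_{\bm{k}_\p}\vert_{\mathcal{O}_\p^\times}$; as for the value at $u_\p$, it amounts to the fact that $\alpha_\p$ specializes to the eigenvalue of $U_\p^\circ$ on $\HH^q(X_{K^\p\times I_\p},V_{\bm{k}}^\vee)^\epsilon_{(\mathfrak{m}_\pi^S,U_\p-1)}$, which equals $\cf_{\bm{k}_\p}(\varpi_\p)$ since $\pi_\p$ is the Steinberg representation. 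Granting this, Proposition~\ref{L-inv} reduces the theorem to proving that the reduction map $\mathrm{red}_{\chi_{\alpha_\p}}$ is surjective.

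To prove that surjectivity I would combine the Kohlhaase--Schraen resolution of Theorem~\ref{Koszul} with Theorem~\ref{etaleness}. Applied to the locally analytic character $\chi_{\alpha_\p}\colon B\to\mathcal{O}(\mathcal{U})^\times$ (whose restriction to $\mathcal{O}_\p^\times$ is $\kappa^{\mathrm{un}}_{\mathcal{U}}$ and whose value at $\varpi_\p$ is $\alpha_\p$), Theorem~\ref{Koszul} gives the short exact sequence
\begin{equation*}
0\longrightarrow\cind_{I_\p}^{G_\p}(\mathcal{A}_{\kappa^{\mathrm{un}}_{\mathcal{U}}}^n)\xlongrightarrow{u_\p-\alpha_\p}\cind_{I_\p}^{G_\p}(\mathcal{A}_{\kappa^{\mathrm{un}}_{\mathcal{U}}}^n)\xlongrightarrow{\mathrm{aug}_{\chi_{\alpha_\p}}}\mathbb{I}_B(\chi_{\alpha_\p})\longrightarrow 0,
\end{equation*}
which reduces modulo $\mathfrak{m}_{\bm{k}}$ to the analogous sequence for $\cf_{\bm{k}_\p}$. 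Applying the contravariant functor $\HH^\bullet_{\mathcal{O}(\mathcal{U}),\cont}(X_{K^\p}^\p,-,V_{\bm{k}^\p}^\vee)^\epsilon_{\mathfrak{m}_\pi^S}$, and recalling that $u_\p$ induces the operator $U_\p^\circ$ on cohomology, one obtains a long exact sequence linking $\HH^\bullet_{\mathcal{O}(\mathcal{U}),\cont}(X_{K^\p}^\p,\mathbb{I}_B(\chi_{\alpha_\p}),V_{\bm{k}^\p}^\vee)^\epsilon_{\mathfrak{m}_\pi^S}$ with $\HH^\bullet(X_{K^\p\times I_\p},\mathcal{D}_{\mathcal{U},\bm{k}^\p}^n)^\epsilon_{\mathfrak{m}_\pi^S}$, the connecting maps being $U_\p^\circ-\alpha_\p$; over $\Omega$ the analogous sequence for $\cf_{\bm{k}_\p}$ maps to it under reduction modulo $\mathfrak{m}_{\bm{k}}$. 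Exactly as in the proof of Proposition~\ref{equivalence}, the existence of slope decompositions (Theorem~\ref{AS}) lets one replace the $\mathfrak{m}_\pi^S$-localizations of the overconvergent groups by their localizations at $(\mathfrak{m}_\pi^S,U_\p-1)$ without altering the $\mathbb{I}_B(-)$-terms, since $U_\p^\circ-\alpha_\p$ acts invertibly on the complementary summand. By Theorem~\ref{etaleness}, after shrinking $\mathcal{U}$ the group $\HH^d(X_{K^\p\times I_\p},\mathcal{D}_{\mathcal{U},\bm{k}^\p}^n)^\epsilon_{(\mathfrak{m}_\pi^S,U_\p-1)}$ vanishes for $d\neq q$, is finitely generated free over $\mathcal{O}(\mathcal{U})_{\bm{k}_\p}$ for $d=q$, and $U_\p^\circ$ acts on it through the unit $\alpha_\p$, so that $U_\p^\circ-\alpha_\p=0$. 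A chase through the long exact sequence then shows that $\mathrm{aug}_{\chi_{\alpha_\p}}^\ast$ induces an isomorphism from $\HH^q_{\mathcal{O}(\mathcal{U}),\cont}(X_{K^\p}^\p,\mathbb{I}_B(\chi_{\alpha_\p}),V_{\bm{k}^\p}^\vee)^\epsilon_{\mathfrak{m}_\pi^S}$ onto $\HH^q(X_{K^\p\times I_\p},\mathcal{D}_{\mathcal{U},\bm{k}^\p}^n)^\epsilon_{(\mathfrak{m}_\pi^S,U_\p-1)}$, and likewise that $\HH^q_{\Omega,\cont}(X_{K^\p}^\p,\mathbb{I}_B(\cf_{\bm{k}_\p}),V_{\bm{k}^\p}^\vee)^\epsilon_{\mathfrak{m}_\pi^S}\cong\HH^q(X_{K^\p\times I_\p},\mathcal{D}_{\bm{k}}^n)^\epsilon_{(\mathfrak{m}_\pi^S,U_\p-1)}$, these isomorphisms being compatible with $\mathrm{red}_{\chi_{\alpha_\p}}$. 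By the base-change isomorphism in Theorem~\ref{etaleness}, $\mathrm{red}_{\chi_{\alpha_\p}}$ is then identified with the canonical map from a finitely generated free $\mathcal{O}(\mathcal{U})_{\bm{k}_\p}$-module to its reduction modulo $\mathfrak{m}_{\bm{k}_\p}$, which is surjective. Proposition~\ref{L-inv} therefore yields $\partial_{\bm{v}}(\chi_{\alpha_\p})\in\mathcal{L}_\p(\pi)^\epsilon$ for every $\bm{v}$ in the tangent space of $\mathcal{U}$ at $\bm{k}$, which is the assertion of the theorem.

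The step I expect to be most delicate is the passage between the two localizations $\mathfrak{m}_\pi^S$ and $(\mathfrak{m}_\pi^S,U_\p-1)$: Proposition~\ref{L-inv} is phrased with the coarser $\mathfrak{m}_\pi^S$-localization of the $\mathbb{I}_B(-)$-cohomology, whereas the vanishing and freeness in Theorem~\ref{etaleness} hold only after localizing at the finer maximal ideal. Reconciling the two rests on the slope-decomposition argument already used in Proposition~\ref{equivalence} --- the summand of the overconvergent cohomology on which $U_\p^\circ-\alpha_\p$ is invertible contributes nothing to the cohomology with $\mathbb{I}_B(-)$-coefficients --- together with the verification that $\alpha_\p$ specializes at $\bm{k}$ to the Steinberg eigenvalue $\cf_{\bm{k}_\p}(\varpi_\p)$, so that the summand isolated here is exactly the one matching the component cut out on the classical side. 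The remaining ingredients --- exactness of the long exact sequences (flatness of the relevant continuous duals), naturality of $\mathrm{aug}^\ast$ under reduction modulo $\mathfrak{m}_{\bm{k}}$, and the elementary fact that reduction of a finitely generated free module over a local ring is surjective --- are routine.
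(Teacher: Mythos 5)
Your proposal follows the paper's own strategy: reduce via Proposition~\ref{L-inv} to the surjectivity of $\mathrm{red}_{\chi_{\alpha_\p}}$, identify its source with $\HH^q(X_{K^\p\times I_\p},\mathcal{D}^n_{\mathcal{U},\bm{k}^\p})^\epsilon_{(\mathfrak{m}_\pi^S,U_\p-1)}$ by running the Kohlhaase--Schraen sequence for $\chi_{\alpha_\p}$ together with the slope-decomposition and localization maneuvers of Proposition~\ref{equivalence}, and then feed in the vanishing, freeness and scalar action of $U_\p^\circ$ from Theorem~\ref{etaleness}; your preliminary check that $\chi_{\alpha_\p}$ reduces to $\cf_{\bm{k}_\p}$ modulo $\mathfrak{m}_{\bm{k}}$ is left implicit in the paper and is a worthwhile addition. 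Where you diverge is in handling the target of $\mathrm{red}_{\chi_{\alpha_\p}}$: the paper identifies $\HH^q_{\Omega,\cont}(X^\p_{K^\p},\II_B(\cf_{\bm{k}_\p}),V_{\bm{k}^\p}(\Omega)^\vee)_{\mathfrak{m}_\pi^S}$ first with $\HH^q_{\Omega}(X^\p_{K^\p},i_B(\cf_{\bm{k}_\p}),V_{\bm{k}^\p}(\Omega)^\vee)_{\mathfrak{m}_\pi^S}$ --- this is exactly the non-criticality hypothesis --- and then, via the smooth resolution \eqref{smoothup}, with the classical group $\HH^q(X_{K^\p\times I_\p},V_{\bm{k}}(\Omega)^\vee)_{(\mathfrak{m}_\pi^S,U_\p-1)}$, which is precisely the target of the surjection furnished by Theorem~\ref{etaleness}; you instead run the locally analytic Kohlhaase--Schraen sequence again at weight $\bm{k}$ over $\Omega$ and then declare $\mathrm{red}_{\chi_{\alpha_\p}}$ to be the reduction of a free module modulo $\mathfrak{m}_{\bm{k}_\p}$. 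Be careful at that point: the base-change isomorphism of Theorem~\ref{etaleness} identifies the reduction of $\HH^q(X_{K^\p\times I_\p},\mathcal{D}^n_{\mathcal{U},\bm{k}^\p})^\epsilon_{(\mathfrak{m}_\pi^S,U_\p-1)}$ with the \emph{classical} group $\HH^q(X_{K^\p\times I_\p},V_{\bm{k}}^\vee)^\epsilon_{(\mathfrak{m}_\pi^S,U_\p-1)}$, not with $\HH^q(X_{K^\p\times I_\p},\mathcal{D}^n_{\bm{k}})^\epsilon_{(\mathfrak{m}_\pi^S,U_\p-1)}$; to convert surjectivity onto the former into surjectivity onto the latter, and to make your weight-$\bm{k}$ long-exact-sequence chase close (vanishing of the localized finite-slope overconvergent group in degree $q-1$, and $U_\p^\circ$ acting through the expected scalar in degree $q$), you must invoke the standing assumption that $\pi$ is non-critical at $\p$ --- exactly the point at which the paper cites non-criticality. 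With that dependence made explicit (and with the family analogue of the slope decompositions of Theorem~\ref{AS}, which the paper also uses tacitly through Theorem~\ref{etaleness}), your argument is complete and is essentially the paper's proof, with your variant buying a slightly more uniform use of the locally analytic resolution at the cost of hiding where non-criticality enters.
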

\begin{proof}
By Proposition \ref{L-inv}, it is enough to prove that $\mathrm{red}_{\chi}$ is surjective.

By Theorem \ref{Koszul}, with the same arguments as in the proof of Proposition \ref{equivalence}, the map induced in cohomology by \eqref{exact}
$$\HH^q_{\mathcal{O}(\mathcal{U}),\cont}(X_{K^\p}^\p,\mathbb{I}_B(\chi_{\bm{k}_\p}),V_{\bm{k}^\p}^\vee)_{\mathfrak{m}_\pi^S}\too \HH^q(X_{K^\p\times I_\p},\mathcal{D}_{\mathcal{U},\bm{k}^\p}^n)_{(\mathfrak{m}_\pi^S,U_p-1)}$$
is an isomorphism.
Moreover, by Theorem \ref{etaleness}, the reduction mod $\mathfrak{m}_{\bm{k}}$ map induces a surjective map 
$$\HH^q(X_{K^\p\times I_\p},\mathcal{D}_{\mathcal{U},\bm{k}^\p}^n)^\epsilon_{(\mathfrak{m}_\pi^S,U_\p-1)}\longrightarrow \HH^q(X_{K^\p\times I_\p},V_{\bm{k}}(\Omega)^\vee)^\epsilon_{(\mathfrak{m}_\pi^S,U_\p-1)}.$$
On the other hand, one has the isomorphisms
\begin{align*}
\HH^q(X^{\p}_{K^{\p}\times I_\p},V_{\bm{k}}(\Omega)^\vee)^\epsilon_{(\mathfrak{m}_\pi^S,U_\p-1)}
\cong &\HH^q(X_{K^{\p}}^\p,i_B(\cf_{\bm{k}_\p}),V_{\bm{k}^\p}(\Omega)^\vee)^\epsilon_{\mathfrak{m}_\pi^S}\\
\cong &\HH^q_{\Omega,\cont}(X_{K^\p}^\p,\mathbb{I}_B(\cf_{\bm{k}_\p}),V_{\bm{k}^\p}(\Omega)^\vee)^\epsilon_{\mathfrak{m}_\pi^S}\\
\end{align*}
where the first isomorphism follows by the arguments in the proof of Proposition \ref{equivalence} and the second one from the non-criticality of $\pi_\Omega$.

Recollecting all the maps, one gets the claim.
\end{proof}

\subsection{Relation with Galois representations}
Let $\rho=\rho_\pi \colon \Gal(\overline{F}/F) \rightarrow \GL_2(\Omega)$ be the $2$-dimensional Galois representation attached to $\pi$ and let $\rho_\p$ be its restriction to a decomposition group $\Gal(\overline{F_\p}/F_\p)$ at $\p$.
As local-global compatibility is known in this case by Saito (cf.~\cite{Saito}), the representation $\rho_\p$ is semistable, non-crystalline, i.e.:
$$\mathcal{D}_{\mathrm{st}}(\rho_\p)= (\rho_\p\otimes_{\Q_p} B_{\mathrm{st}})^{\Gal(\overline{F_\p}/F_\p)}$$
is a free $\Omega\otimes_{\Q_p} F_{\p,0}$-module (where $F_{\p,0}$ denotes the maximal unramified subfield of $F_\p$) and the nilpotent linear map $N_\p$ inherited from the corresponding map on Fontaine's semistable period ring $B_{\mathrm{st}}$ is non-zero.
Moreover, the kernel of $N_\p$ is a free $\Omega\otimes_{\Q_p} F_{\p,0}$-module of rank one.
Let $e_0$ be a generator of $\Ker(N_\p)$ and define $e_1=N_\p(e_0).$
Furthermore, it is known that the zeroth step of the deRham filtration
$$\mathrm{Fil}^0(\mathcal{D}_{\mathrm{st}}(\rho_\p))\subseteq \mathcal{D}_{\mathrm{st}}(\rho_\p)\otimes_{F_{\p,0}}F_\p$$
is a free $\Omega\otimes_{\Q_p}F_\p$-module of rank one.
In particular there exist $a_0^{\rho_\p},a_1^{\rho_\p}\in \Omega\otimes_{\Q_p}F_\p$ such that
$$\mathrm{Fil}^0(\mathcal{D}_{\mathrm{st}}(\rho_\p))=a_0^{\rho_\p}\cdot e_o + a_1^{\rho_\p} \cdot e_1.$$
\begin{Def}
We call the local Galois representation $\rho_\p$ non-critical if $a_0^{\rho_\p}\in (\Omega\otimes_{\Q_p}F_\p)^\times$.
\end{Def}

It is expected that every $\rho_\p$ coming from a Hilbert modular form as above is non-critical.
If $F_\p=\Q_p$ or $k_\sigma=0$ for all $\sigma\in\Sigma_p$, the fact that $\mathcal{D}_{\mathrm{st}}(\rho_\p)$ is weakly admissible implies that $\rho_\p$ is non-critical.
It seems to the authors of this article that in both, \cite{BDJ}, Section 5.2, and \cite{CMP}, Section 3.2, non-criticality of the Galois representation is assumed implicitly.
Note that the main theorem of \cite{Xie} states that a Galois representation as above is non-critical if $\p$ is the only prime of $F$ above $p$.
But similar as on page 653 of \cite{CMP} an Amice-V\'elu and Vishik-type argument is used in the crucial Proposition 7.3 of \textit{loc.cit.} and it is not clear to the authors of this article, if such an argument is applicable here (see Remark \ref{CMPremark} above).

To any tuple
$$a=(a_\sigma)\in \Omega\otimes_{\Q_p}F_\p \cong \prod_{\sigma\in\Sigma_\p} \Omega$$
we attach a codimension one subspace
$$\LI^{a}\subseteq\Hom(F_\p^\times,\Omega)$$
that does not contain the subspace of smooth homomorphisms as follows:
define
$$\log_\sigma=\log_\p\circ\sigma \colon F_\p^\times\to \Omega,$$
where $\log_\p$ is the usual branch of the $p$-adic logarithm fulfilling $\log_p(p)=0$ and put
$$\LI^{a}=\langle\ord_\p - a_\sigma \log_\sigma \mid \sigma\in\Sigma_\p\rangle,$$
where $\ord_\p$ denotes the normalized $p$-adic valuation of $F_\p^\times$.

\begin{Def}
Suppose $\rho_\p$ is non-critical.
The Fontaine--Mazur $\LI$-invariant of $\rho_\p$ is the codimension one subspace
$$\LI^{FM}(\rho_\p)=\LI^{a_1^{\rho_\p}/a_0^{\rho_\p}}\subseteq \Hom(F_\p^\times,\Omega).$$
\end{Def}

\begin{Thm}\label{Galois}
Suppose that $\pi$ is non-critical at $\p$ and that $\rho_\p$ is non-critical.
Then the equality 
$$\mathcal{L}_\p(\pi)^\epsilon=\mathcal{L}(\rho_\p)^{FM}$$
holds for every sign character $\epsilon$.
In particular, the automorphic $\LI$-invariant $\LI_\p(\pi)^\epsilon$ does not depend on the sign character $\epsilon$.
\end{Thm}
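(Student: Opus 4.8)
The plan is to show that both $\mathcal{L}_\p(\pi)^\epsilon$ and $\mathcal{L}(\rho_\p)^{FM}$ coincide with one and the same subspace $V\subseteq\Hom_{\cont}(F_\p^\times,\Omega)$, namely the image of the linear map
\[
T_{\bm{k}}(\mathcal{U})\too\Hom_{\cont}(F_\p^\times,\Omega),\qquad \bm{v}\mapstoo\partial_{\bm{v}}(\chi_{\alpha_\p}),
\]
the ``space of infinitesimal derivatives of the $U_\p$-eigenvalue along the $p$-adic family through $\pi$''. Since $\Hom_{\cont}(F_\p^\times,\Omega)$ is spanned by $\ord_\p$ and the $\#\Sigma_\p$ homomorphisms $\log_\sigma$, it has dimension $\#\Sigma_\p+1$; hence both of the codimension-one subspaces in question have dimension $\#\Sigma_\p$, and it will be enough to prove that $V$ itself has dimension $\#\Sigma_\p$ and is contained in each of them.

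First I would treat the automorphic side. By construction $\chi_{\alpha_\p}$ restricts to $\kappa^{\mathrm{un}}_{\mathcal{U}}$ on $\mathcal{O}_\p^\times$, so the restriction to $\mathcal{O}_\p^\times$ of $\partial_{\bm{v}}(\chi_{\alpha_\p})$ equals $\partial_{\bm{v}}(\kappa^{\mathrm{un}}_{\mathcal{U}})$; and $\bm{v}\mapsto\partial_{\bm{v}}(\kappa^{\mathrm{un}}_{\mathcal{U}})$ is the canonical isomorphism between the tangent space of $\mathcal{U}$ at $\bm{k}$ and $\Hom_{\cont}(\mathcal{O}_\p^\times,\Omega)$, since $\mathcal{U}$ is an admissible open in the weight space $\mathcal{W}_\p$ and $\kappa^{\mathrm{un}}$ is the universal character. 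In particular $\bm{v}\mapsto\partial_{\bm{v}}(\chi_{\alpha_\p})$ is injective, so $\dim_\Omega V=\dim_\Omega T_{\bm{k}}(\mathcal{U})=\#\Sigma_\p$. By Theorem \ref{mainthm} we have $V\subseteq\mathcal{L}_\p(\pi)^\epsilon$, and since the latter has dimension $\#\Sigma_\p$ as well, $V=\mathcal{L}_\p(\pi)^\epsilon$.

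It remains to identify $V$ with the Fontaine--Mazur $\LI$-invariant, and this is the heart of the matter. By Theorem \ref{etaleness}, after shrinking $\mathcal{U}$, the representation $\pi$ lies on a genuine $p$-adic family over $\mathcal{U}$ on which $U_\p^\circ$ acts through a unit $\alpha_\p\in\mathcal{O}(\mathcal{U})^\times$. Transporting this family to Galois representations via local--global compatibility in families (building on Saito's result for the specialization $\rho_\p$, which is semistable non-crystalline), one obtains a family of Galois representations whose restriction at $\p$ is trianguline with triangulation parameter governed by the character $\chi_{\alpha_\p}$. The input I would then invoke is the known computation of the Fontaine--Mazur $\LI$-invariant of a semistable non-crystalline specialization in terms of the infinitesimal variation of this parameter along the family (the equality between the derivatives of the $U_\p$-eigenvalue and the Fontaine--Mazur $\LI$-invariant referred to in the introduction): it yields $\partial_{\bm{v}}(\chi_{\alpha_\p})\in\mathcal{L}(\rho_\p)^{FM}$ for every $\bm{v}$, hence $V\subseteq\mathcal{L}(\rho_\p)^{FM}$, and a dimension count as before forces $V=\mathcal{L}(\rho_\p)^{FM}$. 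Combining the two identifications gives $\mathcal{L}_\p(\pi)^\epsilon=V=\mathcal{L}(\rho_\p)^{FM}$, and the asserted independence of $\epsilon$ is then immediate, the right-hand side not involving $\epsilon$.

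The main obstacle is precisely this last step: passing from the analytic datum (the first-order variation of the $U_\p$-eigenvalue in the Coleman family) to the $p$-adic Hodge-theoretic datum defining $\mathcal{L}(\rho_\p)^{FM}$ (the position of $\mathrm{Fil}^0$ relative to $\Ker N_\p$). This is exactly where the hypothesis that $\rho_\p$ is non-critical enters: it ensures that the triangulation of the family is the ``expected'' one, so that the derivative formula produces the genuine codimension-one space $\mathcal{L}(\rho_\p)^{FM}$ rather than a degenerate variant. Rather than reproving this here, I would cite it from the literature on trianguline deformations and $\LI$-invariants; everything else in the argument is formal, resting on Theorems \ref{mainthm} and \ref{etaleness} and the elementary dimension bookkeeping above.
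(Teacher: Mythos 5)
Your proposal follows essentially the same route as the paper: both sides are identified with the space of derivatives $\partial_{\bm{v}}(\chi_{\alpha_\p})$ of the $U_\p$-eigenvalue along the family, using Theorem \ref{mainthm} on the automorphic side, the corresponding derivative formula for the Fontaine--Mazur $\LI$-invariant of the attached Galois family cited from the literature on the Galois side, and a codimension-one dimension count (which relies, as you use, on the non-criticality hypotheses). Your write-up just makes the tangent-space bookkeeping explicit, which the paper leaves implicit by referring to the parallel weight $2$ case; the approach is the same and correct.
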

\begin{proof}
This follows directly by comparing Theorem \ref{mainthm} with the corresponding formula on the Galois side (cf.~\cite{ZhangYC}, Theorem 1.1) for the family of Galois representation attached to the family passing through $\pi$.
See \cite{GR}, Theorem 4.1 for more details in case $k_\sigma=0$ for all $\sigma\in\Sigma_\p$.
\end{proof}

In \cite{Ding1} respectively \cite{Ding2} Ding proves that in case $D$ is split at exactly one Archimedean place the Fontaine--Mazur $\LI$-invariant can be detected by completed cohomology of the associated Shimura curve.
Thus, by the theorem above the automorphic $\LI$-invariant can also be detected by completed cohomology in that case.
For the modular curve Breuil gives a direct proof of this consequence in \cite{Br2}.
It would be worthwhile to explore whether Breuil's proof extends to our more general setup.

\bibliographystyle{abbrv}
\bibliography{bibfile}

\def\cprime{$'$}
\begin{thebibliography}{10}

\bibitem{AshStevens}
A.~Ash and G.~Stevens.
\newblock {$p$}-adic deformations of arithmetic cohomology.
\newblock preprint, 2008.

\bibitem{BWi}
D.~{Barrera} and C.~{Williams}.
\newblock {Exceptional zeros and $L$-invariants of {B}ianchi modular forms}.
\newblock {\em Trans. Amer. Math. Soc.}, 371:1--34, 2019.

\bibitem{BDJ}
D.~{Barrera Salazar}, M.~{Dimitrov}, and A.~{Jorza}.
\newblock {$p$-adic $L$-functions of {H}ilbert cusp forms and the trivial zero
  conjecture}.
\newblock {\em J. Eur. Math. Soc. (JEMS)}, to appear.

\bibitem{BWi0}
D.~Barrera~Salazar and C.~Williams.
\newblock {$p$-adic $L$-functions for $\text{GL}_{2}$}.
\newblock {\em Canad. J. Math.}, 71(5):1019–1059, 2019.

\bibitem{BH}
J.~Bergdall and D.~Hansen.
\newblock {On $p$-adic $L$-functions for {H}ilbert modular forms}.
\newblock {\em Mem. Amer. Math. Soc.}, to appear.

\bibitem{BDI}
M.~Bertolini, H.~Darmon, and A.~Iovita.
\newblock Families of automorphic forms on definite quaternion algebras and
  {T}eitelbaum’s conjecture.
\newblock {\em Astérisque}, 331:29--64, 2010.

\bibitem{BW}
A.~Borel and N.~Wallach.
\newblock {\em Continuous Cohomology, Discrete Subgroups, and Representations
  of Reductive Groups}.
\newblock Mathematical surveys and monographs. American Mathematical Society,
  2000.

\bibitem{Br}
C.~Breuil.
\newblock Invariant {$\mathcal{L}$} et série spéciale p-adique.
\newblock {\em Ann. Sci. Ec. Norm. Super.}, 37(4):559--610, 2004.

\bibitem{Br2}
C.~Breuil.
\newblock Série spéciale p-adique et cohomologie étale complétée.
\newblock {\em Astérisque}, 331:65--115, 2010.

\bibitem{breuil_2011}
C.~Breuil.
\newblock {\em Remarks on some locally $\mathbb{Q}_p$-analytic representations
  of $\mathrm{GL}_2(F)$ in the crystalline case}, page 212–238.
\newblock London Mathematical Society Lecture Note Series. Cambridge University
  Press, 2011.

\bibitem{Bump}
D.~Bump.
\newblock {\em Automorphic Forms and Representations}.
\newblock Cambridge Studies in Advanced Mathematics. Cambridge University
  Press, 1997.

\bibitem{CMP}
M.~Chida, C.~P. Mok, and J.~Park.
\newblock On {T}eitelbaum type {L}-invariants of {H}ilbert modular forms
  attached to definite quaternions.
\newblock {\em J. Number Theory}, 147:633--665, 2015.

\bibitem{Dat}
J.~F. Dat.
\newblock Espaces sym\'etriques de {D}rinfeld et correspondance de {L}anglands
  locale.
\newblock {\em Ann. Sci. Ec. Norm. Super.}, 4e s{\'e}rie, 39(1):1--74, 2006.

\bibitem{Ding2}
.~Ding.
\newblock {$\mathcal{L}$}-invariants, partially de {R}ham families, and
  local-global compatibility.
\newblock {\em Ann. Inst. Fourier (Grenoble)}, 67(4):1457--1519, 2017.

\bibitem{Ding1}
Y.~Ding.
\newblock {$\mathcal{L}$}-invariants and local-global compatibility for the
  group {${\rm GL}_2/F$}.
\newblock {\em Forum Math. Sigma}, 4:Paper No. e13, 49, 2016.

\bibitem{Ding}
Y.~Ding.
\newblock {Simple $\mathcal{L}$-invariants for $\mathrm{GL}_n$}.
\newblock {\em Trans. Amer. Math. Soc.}, 372(11):7993--8042, 2019.

\bibitem{EmUnitary}
M.~Emerton.
\newblock {$p$}-adic {$L$}-functions and unitary completions of representations
  of {$p$}-adic reductive groups.
\newblock {\em Duke Math. J.}, 130(2):353--392, 2005.

\bibitem{FG}
M.~Fornea and L.~{Gehrmann}.
\newblock Plectic {S}tark--{H}eegner points.
\newblock preprint, 2021.

\bibitem{Ge}
L.~Gehrmann.
\newblock On {S}halika models and p-adic {L}-functions.
\newblock {\em Isr. J. Math.}, 226(1):237--294, Jun 2018.

\bibitem{Ge3}
L.~{Gehrmann}.
\newblock Derived {H}ecke algebra and automorphic {L}-invariants.
\newblock {\em Trans. Amer. Math. Soc.}, 372(11):7767--7784, 2019.

\bibitem{Ge2}
L.~{Gehrmann}.
\newblock {Functoriality of automorphic $L$-invariants and applications}.
\newblock {\em Doc. Math.}, 24:1225--1243, 2019.

\bibitem{Ge4}
L.~{Gehrmann}.
\newblock Automorphic {L}-invariants for reductive groups.
\newblock {\em J. Reine Angew. Math.}, to appear.

\bibitem{GR}
L.~{Gehrmann} and G.~{Rosso}.
\newblock {Big principal series, p-adic families and L-invariants}.
\newblock preprint, 2020.

\bibitem{Hansen}
D.~Hansen.
\newblock Universal eigenvarieties, trianguline {G}alois representations, and
  {$p$}-adic {L}anglands functoriality.
\newblock {\em J. Reine Angew. Math.}, 730:1--64, 2017.
\newblock With an appendix by James Newton.

\bibitem{Ha}
G.~Harder.
\newblock Eisenstein cohomology of arithmetic groups. the case {GL}2.
\newblock {\em Invent. Math.}, 89(1):37--118, 1987.

\bibitem{JanRat}
F.~Januszewski.
\newblock Rational structures on automorphic representations.
\newblock {\em Math. Ann.}, 370(3-4):1805--1881, 2018.

\bibitem{KoSch}
J.~Kohlhaase and B.~Schraen.
\newblock Homological vanishing theorems for locally analytic representations.
\newblock {\em Math. Ann.}, 353(1):219--258, 2012.

\bibitem{Orlik}
S.~Orlik.
\newblock On extensions of generalized {S}teinberg representations.
\newblock {\em J. Algebra}, 293(2):611 -- 630, 2005.

\bibitem{Orton}
L.~Orton.
\newblock An elementary proof of a weak exceptional zero conjecture.
\newblock {\em Canad. J. Math.}, 56:373--405, 2004.

\bibitem{RotSev}
V.~Rotger and A.~Seveso, M.
\newblock {L}-invariants and {D}armon cycles attached to modular forms.
\newblock {\em J. Eur. Math. Soc. (JEMS)}, 14(6):1955--1999, 2012.

\bibitem{Saito}
T.~Saito.
\newblock Hilbert modular forms and {$p$}-adic {H}odge theory.
\newblock {\em Compos. Math.}, 145(5):1081--1113, 2009.

\bibitem{SS}
P.~Schneider and U.~Stuhler.
\newblock Resolutions for smooth representations of the general linear group
  over a local field.
\newblock {\em J. Reine Angew. Math.}, 436:19--32, 1993.

\bibitem{Sev}
M.~A. Seveso.
\newblock The {T}eitelbaum conjecture in the indefinite setting.
\newblock {\em Amer. J. Math.}, 135(6):1525--1557, 2013.

\bibitem{Sp}
M.~Spieß.
\newblock On special zeros of p-adic {L}-functions of {H}ilbert modular forms.
\newblock {\em Invent. Math.}, 196(1):69--138, 2014.

\bibitem{Sp3}
M.~Spieß.
\newblock On $\mathcal{L}$-invariants associated to {H}ilbert modular forms.
\newblock preprint, 2020.

\bibitem{Teitelbaum}
J.~T. Teitelbaum.
\newblock Values of p-adic {L}-functions and a p-adic {P}oisson kernel.
\newblock {\em Invent. Math.}, 101(2):395--410, 1990.

\bibitem{Urban}
E.~Urban.
\newblock Eigenvarieties for reductive groups.
\newblock {\em Ann. of Math.}, 174(3):1685--1784, 2011.

\bibitem{Xie}
B.~Xie.
\newblock On noncritical {G}alois representations.
\newblock {\em J. Inst. Math. Jussieu}, page 1–38, 2021.

\bibitem{ZhangYC}
Y.~Zhang.
\newblock $\mathscr{L}$-invariants and logarithm derivatives of eigenvalues of
  {F}robenius.
\newblock {\em Sci. China Math.}, 57(8):1587--1604, 2014.

\end{thebibliography}

\end{document}